\def\rnew{\color{red}}
\newtheorem{lemma}{Lemma}
\newtheorem{theorem}{Theorem}
\newtheorem{remark}{Remark}
\newtheorem{corollary}{Corollary}
\newtheorem{example}{Example}
\newtheorem{proposition}{Proposition}
\newtheorem{definition}{Definition}
\newcommand\eref[1]{(\ref{#1})}
\newcommand{\Mdual}[1]{\ensuremath{\left\langle #1 \right\rangle_{1/\maxw}}}
\renewcommand{\Mdual}[1]{\ensuremath{\left\langle #1 \right\rangle}}
\newcommand{\dup}[1]{\ensuremath{\left\langle #1 \right\rangle}}
\newcommand{\ulambda}{\underline{\lambda}}
\newcommand{\rhs}{{\rm rhs}}
\newcommand{\beqn}{\begin{equation}}
\newcommand{\eeqn}{\end{equation}}
\newcommand{\be}{\begin{equation}}
\newcommand{\ee}{\end{equation}}
\newcommand{\mI}{\mathfrak{I}}
\newcommand{\ve}{\varepsilon}
\newcommand{\cT}{\mathcal{T}}
\def\lsim{\raisebox{-1ex}{$~\stackrel{\textstyle<}{\sim}~$}}
\newcommand{\N}{\mathbb{N}}
\newcommand{\R}{\mathbb{R}}
\newcommand{\BIGOP}[1]{\mathop{\mathchoice%
{\raise-0.22em\hbox{\huge $#1$}}%
{\raise-0.05em\hbox{\Large $#1$}}{\hbox{\large $#1$}}{#1}}}
\newcommand{\bigtimes}{\BIGOP{\times}}
\newcommand{\BIGboxplus}{\mathop{\mathchoice%
{\raise-0.35em\hbox{\huge $\boxplus$}}%
{\raise-0.15em\hbox{\Large $\boxplus$}}{\hbox{\large $\boxplus$}}{\boxplus}}}
\newcommand{\dd}{\,{\mathrm d}} 
\newcommand{\abs}[2][y]{\if#1y\left\fi\lvert#2\if#1y\right\fi\rvert} 
\newcommand{\norm}[2][y]{\if#1y\left\fi\lVert#2\if#1y\right\fi\rVert} 
\newcommand{\ua}{\underline{a}}
\newcommand{\cB}{\mathfrak{B}}
\newcommand{\tripnorm}[1]{|\!|\!| #1|\!|\!|}
\newcommand{\HH}{{\mathrm H}}
\newcommand{\LL}{{\mathrm L}}
\newcommand{\cE}{\mathcal{E}}
\newcommand{\bc}{\mathbf{c}}
\newcommand{\bm}{\mathbf{m}}
\newcommand {\vphi}{{\varphi}}
\newcommand{\cA}{\mathcal{A}}
\newcommand{\cR}{\mathcal{R}}
\newcommand{\C}{\mathbb{C}}
\newcommand{\cost}{{\rm cost}}
\newcommand{\rr}{\rangle}
\newcommand{\uu}{u}
\newcommand{\cI}{\mathcal{I}}
\newcommand{\rd}[1]{\textcolor{black}{#1}}
\newcommand{\lgr}[1]{\textcolor{red}{#1}}
\numberwithin{equation}{section}
\title[Tensor Compressibility  of Solutions to High-Dimensional Elliptic PDEs]{Tensor-Sparsity of Solutions to
High-Dimensional\\ Elliptic Partial Differential Equations}\thanks{This work has been supported in
part by the DFG Special Priority Program SPP-1324, by the DFG SFB-Transregio 40, by the DFG Research Group 1779,
the Excellence Initiative of the German Federal and State Governments,
 (RWTH Aachen  Distinguished Professorship,
Graduate School AICES),
 and NSF grant DMS 1222390.  The second author's  research was supported by the Office of Naval Research Contracts   ONR
 N00014-09-1-0107, ONR N00014-11-1-0712, ONR N00014-12-1-0561;  and by  the NSF Grants   DMS 0915231, DMS 1222715.  This research was initiated when he was an AICES Visiting Professor at RWTH Aachen.}
\author{Wolfgang Dahmen, Ronald DeVore, Lars Grasedyck, and Endre S\"uli}
\address{\!\!Institut f\"ur Geometrie und Praktische Mathematik\\ RWTH Aachen\\ Templergraben 55\\
52056 Aachen\\ Germany\\ {\tt dahmen@igpm.rwth-aachen.de} \& {\tt lgr@igpm.rwth-aachen.de}}
\address{\!\!Department of Mathematics, Texas A \& M University, College Station, Texas 77840, USA\\
{\tt  ronald.a.devore@gmail.com}}
\address{\!\!Mathematical Institute, University of Oxford, Oxford OX2 6GG, UK\\ {\tt endre.suli@maths.ox.ac.uk}}
\date{\today}
\begin{document}

\begin{abstract}
A recurring theme in attempts to break the curse of dimensionality in the numerical approximations of solutions to
high-dimensional partial differential equations (PDEs) is to employ some form of  sparse tensor approximation. Unfortunately, there are only a few results that quantify the possible advantages of such an approach. This paper
introduces a class $\Sigma_n$ of functions, which can be written as a sum of rank-one tensors using a total of at most $n$
parameters and then uses this notion of sparsity to prove a regularity theorem for certain high-dimensional elliptic PDEs.   It is shown, among other results,   that whenever the right-hand side $f$ of the elliptic
PDE can be approximated with a certain rate $\mathcal{O}(n^{-r})$ in the norm of $\HH^{-1}$ by  elements of $\Sigma_n$,
then the solution $u$ can be approximated in $\HH^1$ from $\Sigma_n$ to accuracy $\mathcal{O}(n^{-r'})$ for any $r'\in (0,r)$.
Since these results require knowledge of the eigenbasis of the elliptic operator considered, we propose
 a second ``basis-free''
model of tensor sparsity  and prove a  regularity theorem for this second sparsity model as well.  We then proceed  to address the important question
of the extent such regularity theorems translate into results on computational complexity.
It is  shown how this second model can be used to derive computational algorithms with performance that breaks the curse of dimensionality on certain model high-dimensional elliptic PDEs with tensor-sparse data.
\end{abstract}

\maketitle

\noindent
{\bf AMS Subject Classification:} 35J25, 41A25, 41A63, 41A46, 65D99\\

\noindent
{\bf Key Words:} High-dimensional elliptic PDEs, tensor sparsity models, regularity theorems, exponential sums of operators, Dunford integrals,
complexity bounds

\section{Introduction}\label{sec:introduction}

Many important problems that arise in applications involve a large number of spatial variables.   These
high-dimensional problems    pose a serious computational challenge because of the so-called   {\em curse of dimensionality}; roughly speaking,
this means that when using classical methods of approximation or numerical methods based on classical approximations,  the computational work required to approximate or to recover a function of $d$ variables with a desired target accuracy typically scales {\em exponentially} in $d$. This has led to so-called {\em intractability} results, which say that even under the assumption that the target function has very high order
of classical regularity (in terms of various notions of derivatives), the exponential effect of the spatial dimension $d$ prevails, see \lgr{\cite{NW09}}.
Subsequent attempts to overcome the curse of dimensionality have been mainly based on exploring the effect of very strong regularity assumptions,
or constraining the dependence of the functions on some of the variables, {\cite{NW08}}. It is not clear though, for which problems of practical interest such strong assumptions are actually satisfied.

The tacit assumption behind these negative tractability results is
that classical notions of smoothness  are used to characterize the regularity of the solution and classical methods of approximation are  used in the development of the numerical algorithms.  In low spatial dimensions,  smoothness  is typically
exploited by using classical approximation methods,  such as splines or finite elements,  based on {\em localization}. This is often  enhanced by {\em adaptation concepts}, which exploit
weaker smoothness measures in the sense of Besov spaces and thus provide somewhat better convergence rates. However, the larger the
spatial dimension the smaller the difference between the various smoothness notions \rm{becomes}.  Adaptivity based on localization is therefore not a decisive remedy, and alternative strategies are called for.

It has recently been recognized that the situation is not as bleak as has been described above.  Indeed, solutions to real-world high-dimensional problems are thought to have a structure different from  high-dimensional regularity, that renders them more
amenable to numerical approximation.  The challenge is to explicitly define these new structures, for a given class of problems, and then to build numerical methods that exploit them.  This has led to various notions, such as sparsity, variable reduction, and reduced modelling.
In contrast with the low-dimensional regime, essentially governed by smoothness, there is of course no universal recipe for discovering the correct notion of  sparsity:
the correct  structural sparsity will depend on the problem at hand.

In the context of numerical approximation of PDEs, there are roughly two groups of high-dimensional problems.
The first group involves {\em parameter-dependent} families of partial differential equations,
where  the number of ``differential'' variables is still small but  the data and the coefficients in the
PDE depend on possibly many additional parameters, which often are to be optimized in a design or optimization context. Hence the solution becomes a function of the spatial variables and of the additional parameters.
In particular, such parameter-dependent PDEs
are obtained when coefficients in the PDE are random fields. Expanding such a random field
may even lead to an {\em infinite} number of deterministic parameters, see e.g.  \cite{CDS1,CDS2}.  Reduced-order modeling concepts
such as POD (proper orthogonal decomposition) or the reduced-basis method aim at
  constructing
solution-dependent dictionaries comprised
of judiciously chosen ``snapshots'' from the solution manifold. This can actually be viewed as a {\em separation ansatz} between the spatial variables and the  parameters.

The second group of problems, and those of interest to us here, concern
partial differential equations posed in a phase space of large spatial dimension    (e.g. Schr\"odinger,  
Ornstein--Uhlenbeck,  Smoluchowski, and Fokker--Planck equations).
As the underlying domain $D$ is typically a Cartesian product $D = \times_{j=1}^d D_j$ of low-dimensional domains,  $D_j$, $j=1,\dots, d$, it is
natural to seek (approximate) representations in terms of {\em separable functions - viz.   low-rank tensors.}  {Kolmogorov equations and related PDEs in (countably) infinitely many space dimensions, which arise as evolution equations for the probability density function for stochastic PDEs, require a different functional-analytic setting from the one considered here and are therefore not treated in the present paper; for further details in this direction the reader is referred to \cite{SchS}.}

{\em The main purpose of the present paper is to propose specific notions of sparsity, based on tensor decompositions, and then to show that the solutions of certain high-dimensional diffusion equations inherit this type of sparsity from given data. This is then shown to lead  indeed to tractability of solving such  high-dimensional PDEs.}

 To motivate the results that follow we sketch a simple  example, albeit not in a PDE context yet,
 which indicates  how tensor structure can   mitigate the curse
of dimensionality.  Suppose that $f\in C^{s}([0,1]^d)$  for some $s\in\N$. Approximating
$\int_{[0,1]^d}f(x)\,{\rm d}x$ by  a standard tensor-product quadrature method $I^d_{s,n}(f)$
 of order $s$ on a Cartesian grid with meshsize $n^{-1}$ in each of the $d$ co-ordinate directions
(e.g. with an $s$th order accurate composite Newton--Cotes {or Gau{\ss}} quadrature possessing nonnegative weights) yields accuracy
in the sense that
$$
\Big| \int_{[0,1]^d}f(x)\,{\rm d}x -I^d_{s,n}(f)\Big| \leq {Cd}\, n^{-s}\|f\|_{C^s([0,1]^d)},
$$
at the expense of  the order of $N= (sn)^d$ operations. Here ${C}$ is a fixed constant  depending on the univariate    quadrature rule.
If, in addition, one knows that $f$ is a product of  {given} univariate functions: $f(x)=f_1(x_1)\cdots f_d(x_d)$, i.e., $f$ is a {\em rank-one tensor or separable function}, then
  $  \int_{[0,1]^d}f(x)\,{\rm d}x = \prod_{j=1}^d\Big( \int_0^1 f_j(x_j)\,{\rm d}x_j\Big)$, and  one obtains
$$
\Big| \int_{[0,1]^d}f(x)\,{\rm d}x - \prod_{j=1}^d I_{s,n }^1(f_j)\Big| \leq {Cd}\, n^{-s}\|f\|_{C^s([0,1]^d)}
$$
at the expense of the order of $dsn$ operations  only, where again $C$ is a fixed constant depending on the univariate quadrature rule.

 Thus, in the first case,   accuracy $\ve$ costs
 \beqn
 \label{N1}
N_1(\ve) = \mathcal{O}(s^d {d^{d/s}} \ve^{-d/s})
 \eeqn
operations so that the curse is apparent in the factor $d$ appearing in the power of $\varepsilon$. One could do much better
by employing Smolyak quadrature rules, which, however, requires  the assumption of  significantly higher regularity of
the integrand,  \cite{NW08}, in order to obtain specific rates. In the second case, accuracy $\ve$ is achieved at the
expense of  the order of {$d^{1+1/s}s\, \ve^{-1/s}$} operations.  Thus, one has the full one-dimensional gain
of smoothness and the spatial dimension enters only linearly as opposed to exponentially as in the first case.
Of course, assuming that $f$ is a rank-one tensor is rather restrictive. If however one knew that it takes only
 $r(\ve)$ rank-one summands to approximate $f$ to within accuracy $\ve$, the computational cost would  still be  of the order of
\beqn
\label{N2}
N_2(\ve)= r(\ve)\,d^{1+1/s} s\, \ve^{-1/s}
\eeqn
operations.
This is preferable to $N_1(\ve) = \mathcal{O}(s^d \ve^{-d/s})$, even when $r(\ve)$ grows like $\ve^{-\alpha}$ for some fixed positive $\alpha$.

The main purpose of this paper is to show that the type of saving exhibited in the above example of numerical integration is present in the numerical approximation of certain high-dimensional elliptic equations described  in \S \ref{sect:gen-setting}.
 To expect such savings,  the elliptic operator under consideration should exhibit a ``tensor-friendly'' structure.
A differential operator that is a tensor-product of low-dimensional elliptic operators would trivially be ``tensor-friendly'', but it would lead us into classes of hypo-elliptic problems. Here, instead, we consider  elliptic operators, which are {\em sums} of tensor-product
operators.  The high-dimensional Laplacian is a prototypical example. We work in
a somewhat more general setting than that of a simple Laplace operator for the following reasons. The original motivation for this work was a class
of Fokker--Planck equations, whose numerical treatment, after suitable operator splitting steps, reduces to solving
a high-dimensional symmetric elliptic problem over a Cartesian product domain, where the energy space of the elliptic operator is a weighted Sobolev space with a strongly degenerate weight. The setting we shall describe in
\S \ref{sect:gen-setting} covers such situations as well, the aim of considering a general class of elliptic problems being to extract and highlight the relevant structural assumptions.

In \S\ref{sec:ts},  we turn to proposing notions of tensor-sparsity that we will prove are relevant for approximating the solutions
to operator equations of the type discussed in the above paragraph and formally introduced in \S \ref{sect:gen-setting}.
To briefly describe in this introduction the form such sparsity takes, consider a
function space $X=X_1(D_1)\otimes\cdots \otimes X_d(D_d)$ over a Cartesian product domain $D=\times_{j=1}^d D_j$. Suppose, only for the sake of simplifying the present discussion, that the component domains $D_j$, $j=1,\dots, d$, are intervals.
  A standard way of approximating the elements of $X$  is to start with $d$  (a priori chosen) univariate bases $\Psi_j=\{\psi^j_\nu:\nu\in \Lambda_j\}$
for the spaces $X_j$, $j=1,\ldots,d$, where $\Psi_1\otimes \cdots \otimes \Psi_d$ is dense in $X$.
Examples of such bases could be trigonometric functions, polynomials or wavelets.
Hence, the   product basis $\Psi = \Psi_1\otimes\cdots\otimes \Psi_d$ allows us to expand $v\in X$ as
\beqn
\label{expf0}
v= \sum_{\nu\in \times_{j=1}^d\Lambda_j} v_{\nu_1\ldots \nu_d}\psi^1_{\nu_1}\otimes\cdots\otimes \psi^d_{\nu_d}.
\eeqn
We use, here and throughout this paper, the standard multi-index notation
\be
\label{mvindices} \nu=(\nu_1,\dots,\nu_d), \quad {\rm and}\quad \nu\le \mu \ {\rm means~that}\ \nu_j\le \mu_j \ {\rm for}\ j=1,\dots,d.
\ee
Once we have decided to use $\Psi$ as an expansion system,  the standard approach to  obtaining a possibly {\em sparse}
approximation is to retain as few terms of the expansion \eqref{expf0} as possible, while still meeting a given accuracy tolerance.  This is a nonlinear selection process known as $N$-term approximation, see \cite{DNL} for a general treatment of $N$-term approximation and   \cite{DSS} for a proposed implementation {and analysis in the context of high-dimensional elliptic} PDEs. However, using such {\em universal} bases $\Psi$, which are
independent of the specific $v$, best $N$-term approximation procedures {significantly mitigate but do not quite avoid  the curse of dimensionality. In fact, while in contrast to conventional {\em  isotropic} multiresolution approximations, under much stronger (mixed)  smoothness
assumptions, the factor $\ve^{-d/s}$ in \eref{N1} can be replaced by $\ve^{-1/s}$, the constants in the corresponding error bounds still exhibit an exponential growth in $d$.}
 
{Thus, it is not clear how useful $N$-term approximation with respect to a fixed background tensor product basis
will be for treating truly high-dimensional problems. }
 This is to be contrasted by allowing $v$ to be expanded in terms of separable functions where the factors are {now} allowed to
{\em depend} on $v$
\beqn
\label{expff}
v= \sum_{k=1}^\infty v_{k,1}\otimes \cdots\otimes v_{k,d},\quad v_{k,j}=v_{k,j}(v).
\eeqn
{Of particular interest is then the case where}, in spite of a possibly moderate smoothness of $v$, the terms in this expansion decay so rapidly that only a few of them
suffice in order to meet the target accuracy. {Thus we are asking for an alternative structural property of a function of many variables that
 leads to  computational tractability despite the lack of high regularity. This will lead us below to proposing new notions of {\em tensor sparsity}.}

Of course, {to that end}, aside from the question for which $v$ such an expansion converges rapidly
and how to identify the summands, the ultimate computational cost depends also on how well, i.e., at what cost, can the factors $v_{k,j}$ be
approximated, e.g. in terms of the \textit{universal} univariate bases $\Psi_j$, $j=1,\ldots,d$. Thus, two approximation processes have to be intertwined, which, in the present setting, takes the following form:
\beqn
\label{expfappr}
v \approx v_N := \sum_{k=1}^r \Big(\sum_{\nu\in \Gamma_{k,1}}c^1_{k,\nu}\psi^1_{\nu}\Big)\otimes\cdots\otimes
\Big(\sum_{\nu\in \Gamma_{k,d}}c^d_{k,\nu}\psi^d_{\nu}\Big),
\eeqn
and ideally, one would like to find $r, \Gamma_{k,j}, c^j_{k,\nu}, \nu\in \Gamma_{k,j}$, $k = 1,\ldots,r$,
$j=1,\ldots,d$, subject to
\beqn
\label{constraints}
\sum_{k=1}^r\sum_{j=1}^d \#(\Gamma_{k,j})\leq N,
\eeqn
so that $\|v -v_N\|_X$ is (near-)minimized; see \cite{B,BD}, where   algorithms are proposed that nearly minimize
this error. This is obviously a much more demanding (and much more nonlinear) optimization
task than activating the best coefficients in \eref{expf0}. In fact, it is not clear that a best approximation
in this sense exists. However, if $v$ admits
such an expansion, where {$r=r(\ve)$} increases slowly with decreasing $\ve$ while the $\#(\Gamma_{k,j})$,
$k=1,\dots,r$, $j=1,\dots,d$,
scale
like typical low-dimensional processes for moderate regularity, it is clear that the number of coefficients needed in
\eref{expfappr} would exhibit essentially the same dependence on $\ve$ as $N_2(\ve)$ in the integration example
discussed above, and hence would be much smaller than the number of coefficients needed in \eref{expf0}, for the
same accuracy.

Another way to look at this comparison is to note that when expanding the products of sums in \eref{expfappr} one obtains
a representation of the form \eref{expf0}. However, the coefficients in the   tensor array $(v_\nu)$, are
strongly dependent,  as is exhibited by the fact that they can be written as a sum of a few ($r$ in number) rank-one
tensors.   In this sense the tensor $(v_\nu)$ is {\em information sparse}, i.e., loosely speaking,
its entries $v_\nu$ depend ``on much fewer parameters''.
 
We are now ready to formulate more precisely the central question in this paper: suppose we are given an elliptic problem
over a high-dimensional product domain and suppose that the data  (the right-hand side function in the partial differential equation)
is {\em tensor-sparse} in the sense that it  can be approximated  by terms of the form \eref{expfappr}
at a certain rate; then, is the solution  $u$ also tensor-sparse, and, if so, at which rate?
In other words: in which
circumstances does a highly nonlinear process offer significantly higher sparsity than \eref{expf0}, and does this break the curse of dimensionality?

In \S\ref{sec:ts}, we shall formalize the above ideas and define sparse tensor structures and their spaces $\Sigma_n$, whose elements depend on at most $n$ parameters.  Sparse spaces of this type  {can, in principle, be defined relative to any {\em universal background} tensor basis.
The most convenient one for us in what follows for the purpose of highlighting the essential mechanisms is however a certain eigenbasis for the elliptic operator under consideration. In particular, this allows us to employ exponential sum approximations to
 {reciprocals} of eigenvalues,
which turns out to be critical for estimating the effect of  {the inversion of an elliptic operator}
on the growth of ranks in  {the approximation of solutions}.}
We then formulate approximation classes for this form of approximation.   An important issue for computational considerations are the
spaces $\Sigma_n(\cR)$, introduced in that section, that impose restrictions on the positions of the tensor basis that
are allowed to participate.
In \S \ref{sec:main}   
  {we present our first main contributions}, which are regularity results for elliptic PDEs stated in terms of the tensor-sparsity of \S\ref{sec:ts}.

 The above approach is primarily of theoretical interest, as a regularity result.
From a numerical perspective its practical relevance is limited by the fact that only in rare cases is an eigenbasis available.
Therefore, we propose in \S \ref{s:numerical} an alternative model for tensor-sparsity, which does not make
use of {\em any} background basis. However, now sparsity is expressed by approximability in terms of short
sums of rank-one functions, which are no longer described by a finite number of parameters but are constrained
by a certain {\em excess regularity}, which, in turn, will render them computable.
Hitherto, {\em regularity} has been described by the number of such constrained  terms needed to approximate the solution to within a given
target accuracy. Thanks to the excess regularity, these constrained rank-one terms can again be approximated
to within the same target accuracy by a finite number of parameters. We then proceed to quantify this last principal fact.
Again exponential sums serve as a key vehicle. This time however, instead of using them for the construction of
separable approximations to  reciprocals of eigenvalues, they are used to approximate inverses of operators. In contrast with previous applications of exponential sums in the literature \cite{Gr2004,Kh2008} for the approximation of
inverses of discrete (finite-dimensional) linear operators, it is now crucial to take into account the mapping properties of the operators concerned in terms of Sobolev scales.

{In \S \ref{sec:complexity}}  and \S \ref{sec:discr} we turn to the question of constructing concrete numerical algorithms which  exhibit these gains in numerical efficiency.   In certain settings, discussed in that section, we  give concrete bounds on the {\em representation and numerical complexity} of further approximating the regularity-controlled rank-one sums by analogous finitely parametrized expressions. By representation-complexity we mean the total number of parameters needed to eventually represent a tensor expansion to within the target accuracy.
Our main tool, in that section,  is the Dunford integral representation of the exponential of a sectorial operator. Again, in contrast with previous such uses (e.g. \cite{GaHaKh2005,GaHaKh2005b}),
it is crucial to apply these representations on the continuous (infinite-dimensional) level \cite{DJ}.  It is shown that the spatial dimension $d$ enters the (total) complexity only in a  {moderate} superlinear fashion, and thereby the curse of dimensionality is avoided.
The results presented here differ in essential ways from those in \cite{WW} since the required information on the data is
highly nonlinear and the differential operators cannot be diagonalized on the data classes under consideration. Thus, the main point
is the complexity of a structure preserving approximate  inversion of the operator.

We close the paper with some comments on open problems in \S \ref{sect:loss}.

\section{The General Setting}\label{sect:gen-setting}

\subsection{A class of elliptic problems}\label{ssec:prob}
In this section, we formulate a class of high-dimensional problems for which we will prove that
their solutions can be effectively captured by sparse tensor approximations for suitable right-hand sides $f$.

Let $D=\bigtimes_{j=1}^d D_j$ be a Cartesian product domain in $\R^{dp}$ where the $D_j\subset \R^p$ are
low-dimensional domain factors. Typically,    $p$ takes the value 1, 2, or 3. So, the high-dimensionality occurs
because $d$ is large.   For each $j=1,\ldots,d$, we denote by $H_j$  a  separable Hilbert space,
with norm $\|\cdot\|_{H_j}$, comprised of functions defined on $D_j$.  We assume that each  $H_j$  is continuously and
densely embedded
in   $L_2(D_j)=L_2(D_j,\mu_j)$ with $\mu_j$ a positive Borel measure on the factor domain $D_j$ that is absolutely continuous
with respect to the Lebesgue measure.
Thus, the measures $\mu_j$, $j=1,\ldots,d$, are not necessarily the Lebesque measure
but could involve weights that are positive a.e. on $D_j$, $j=1,\ldots, d$. The dual pairing $\dup{\cdot,\cdot}$ is always understood to be induced by the inner product
for $L_2(D_j)$.
It will be clear from the context whether $\dup{\cdot,\cdot}$ denotes a dual pairing or the $L_2$ inner product.
Thus we have the Gel'fand triple (rigged Hilbert space)
$$
H_j \subset L_2(D_j) \subset (H_j)',\quad j=1,\ldots, d,
$$
with continuous and dense embeddings, where $(H_j)'$ denotes the normed dual of $H_j$.
We can think of the $H_j$
to stand for a (possibly weighted) Sobolev space, and possibly incorporating boundary conditions.

We assume that we are given ``low-dimensional'' symmetric $H_j$-elliptic operators $\cB_j : H_j \to (H_j)'$, i.e., the
bilinear forms $\mathfrak{b}_j(v,w):= \dup{\cB_j v,w}$ satisfy, for some fixed positive constants $c,\alpha$,
the following inequalities:
\beqn
\label{Di-ell}
|\mathfrak{b}_j(v,w)|\leq c \|v\|_{H_j}\|w\|_{H_j},\quad \mathfrak{b}_j(v,v)\geq \alpha \|v\|_{H_j}^2,\qquad v, w \in H_j,\; j=1,\ldots, d,
\eeqn
and $\mathfrak{b}_j(v,w) = \mathfrak{b}_j(w,v)$ for all $v, w \in H_j$ and all $j=1,\ldots, d$.

We next introduce a Hilbert space $\HH$ over $D$ for which we will
formulate the high-dimensional elliptic variational problems of interest to us.   For each $j=1,2,\dots,d$, we consider the separable Hilbert space
\[\HH_j:=\HH_j(D):=L_2(D_1)\otimes\cdots\otimes L_2(D_{j-1})\otimes H_j\otimes
L_2(D_{j+1})\otimes\cdots\otimes L_2(D_{d}),\]
(not to be confused with $H_j$), with its natural norm $\|\cdot\|_{\HH_j}$ and inner product. From these component spaces, we define
 $$
\HH:= \bigcap_{j=1}^d \HH_j,
$$
which we  equip with the norm $\|\cdot\|_{\HH}$, defined by
\beqn
\label{H1-norm1}
\|v\|_{\HH}^2 := \sum_{j=1}^d \|v\|_{\HH_j}^2.
\eeqn
In the following we will introduce and use an equivalent norm for $\HH$ based on eigenexpansions.
Again, this gives  rise to a Gel'fand triple
\beqn
\label{G-triple}
\HH \subset \LL_2(D) \subset \HH'  ,
\eeqn
with dense continuous embeddings.  For example, if $\mu_j$ is the Lebesgue measure on $D_j$ and $H_j$ is $H^1(D_j)$, $j=1,\ldots,d$, then $\HH$ is identical to the standard Sobolev space $H^1(D)$, and the above norm is equivalent to the usual $H^1(D)$ norm.

The bilinear form on the space $\HH\times \HH$, under consideration,  is given by
\beqn
\label{H-a}
\mathfrak{b}(v,w):=  \sum_{j=1}^d \dup{(\cI_1\otimes \cdots \otimes \cI_{j-1}\otimes \cB_j\otimes \cI_{j+1}\otimes \cdots\otimes \cI_d)v,w},\quad v, w\in \HH,
\eeqn
where $\cI_j$ is the identity operator on $L_2(D_j)$, $j=1,\dots,d$.  This form is $\HH$-elliptic, i.e., there exist constants $0< c_a, c_a'< \infty$ such that
\beqn
\label{H-ell}
|\mathfrak{b}(v,w)|\leq c_a\|v\|_{\HH}\|w\|_{\HH},\qquad \mathfrak{b}(v,v)\geq c_a'\|v\|_{\HH}^2\qquad \forall\,v, w\in \HH.
\eeqn
Furthermore, $\mathfrak{b}$ is symmetric; i.e., $\mathfrak{b}(v,w) = \mathfrak{b}(w,v)$ for all $w, v \in \HH$.
Thus, the symmetric linear operator
\beqn
\label{Adef}
\cB : \HH \to \HH', \quad \mbox{defined by }\,  \dup{\cB v,w}= \mathfrak{b}(v,w) \quad \forall\, v,w\in \HH ,
\eeqn
 is an isomorphism of the form
\beqn
\label{tensor-sum}
\cB =  \sum_{j=1}^d\cI_1\otimes \cdots \otimes \cI_{j-1}\otimes \cB_j\otimes \cI_{j+1}\otimes \cdots\otimes \cI_d,
\eeqn
which is the sum of rank-one tensor-product operators.

The central theme of the subsequent sections  is to show, under  suitable notions of tensor-sparsity, that for
$f\in\HH'$ the solution $u$ of the variational problem:
\beqn
\label{varprob}
\mbox{Find $u \in \HH^1$ such that}\quad \mathfrak{b}(u,v)=\dup{f,v}\quad \forall\, v\in \HH^1,
\eeqn
will inherit a certain tensor-compressibility from that of $f$.  This means that for such right-hand sides $f$ the solution $u$ avoids the curse of dimensionality.

\subsection{Spectral representations and generalized Sobolev spaces }\label{sect:spec}
In this section, we  apply well-known results on elliptic operators and spectral theory to the operator $\cB$ to obtain an eigensystem and to define {an associated scale of Hilbert spaces that can be viewed as generalizations of classical Sobolev spaces. In fact, for specific examples of $\cB$ and for a certain range of smoothness scales they agree with classical Sobolev spaces (with equivalent norms)}. The next lemma, which we quote from \cite{FS,FS:Arxiv}, is a version of the Hilbert--Schmidt theorem and will be relevant in the discussion that follows.

\begin{lemma}\label{lem:abstractEV}
Let $\mathcal{H}$ and $\mathcal{V}$ be separable infinite-dimensional Hilbert spaces, with $\mathcal{V}$ continuously and densely embedded in $\mathcal{H}$. Let $a\colon \mathcal{V} \times \mathcal{V} \to \mathbb{R}$ be a nonzero, symmetric, bounded and coercive
bilinear form. Then, there exist sequences of real numbers $(\lambda_n)_{n \in \mathbb{N}}$ and unit
$\mathcal{H}$ norm members $(e_n)_{n \in \mathbb{N}}$ of $\mathcal{V}$, which solve the eigenvalue problem:
\begin{equation}\label{variational-ev}
\mbox{Find $\lambda \in \mathbb{R}$ and
$e \in \mathcal{H} \setminus \{ 0 \} $ such that}\quad \mathfrak{b}(e,v) = \lambda (e, v)_{\mathcal{H}} \quad \forall\, v \in \mathcal{V},
\end{equation}
where $(\cdot,\cdot)_{\mathcal{H}}$ signifies the inner product of $\mathcal{H}$.
The $\lambda_n$, which can be assumed to be in increasing order with respect to $n$,
are positive, bounded from below away from $0$, and
$\lim_{n\to\infty}\lambda_n = \infty$.

Moreover, the $e_n$ form an $\mathcal{H}$-orthonormal system whose $\mathcal{H}$-closed span is
$\mathcal{H}$ and the rescaling $e_n/\sqrt{\lambda_n}$ gives rise to a $\mathfrak{b}$-orthonormal
system whose $\mathfrak{b}$-closed span is $\mathcal{V}$. Thus, we have
\begin{equation}\label{Fourier-Parseval-H}
h = \sum_{n=1}^\infty ( h, e_n )_{\mathcal{H}} e_n,  \quad  
\norm{h}_{\mathcal{H}}^2 = \sum_{n=1}^\infty \left[( h, e_n )_{\mathcal{H}}\right]^2,\qquad   h \in \mathcal{H},
\end{equation}
as well as
\begin{equation}\label{Fourier-Parseval-V}
v = \sum_{n=1}^\infty \mathfrak{b}\!\left(v, \frac{e_n}{\sqrt{\lambda_n}}\right) \frac{e_n}{\sqrt{\lambda_n}}, \quad  
\norm{v}_{\mathfrak{b}}^2 := \mathfrak{b}(v,v) = \sum_{n=1}^\infty \left[\mathfrak{b}\!\left( v, \frac{e_n}{\sqrt{\lambda_n}} \right)\right]^{2}, \qquad   v \in \mathcal{V}.
\end{equation}
Furthermore, one has
\begin{equation}\label{fastDecay}
h \in \mathcal{H} \quad\text{and}\quad \sum_{n=1}^\infty \lambda_n \left[( h, e_n)_{\mathcal{H}} \right]^2 < \infty \iff h \in \mathcal{V}.
\end{equation}
\end{lemma}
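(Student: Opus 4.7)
The plan is to realize the eigenvalue problem as a compact self-adjoint operator equation on $\mathcal{H}$ and then to invoke the Hilbert--Schmidt spectral theorem. First I would use symmetry, boundedness, and coercivity of $a$, together with the Lax--Milgram lemma (or, equivalently, the Riesz representation theorem applied to the inner product $a$ on $\mathcal{V}$), to define a solution operator $T\colon\mathcal{H}\to\mathcal{V}$ by $a(Tg,v)=(g,v)_{\mathcal{H}}$ for all $v\in\mathcal{V}$. This is well-defined because $g\mapsto(g,\cdot)_{\mathcal{H}}$ is a bounded linear functional on $\mathcal{V}$ by the continuous embedding. Composing with the embedding $\mathcal{V}\hookrightarrow\mathcal{H}$ yields $T\in\mathcal{L}(\mathcal{H})$, and a short computation shows $T$ is self-adjoint (using symmetry of $a$) and positive (using coercivity: $(Tg,g)_{\mathcal{H}}=a(Tg,Tg)\geq\alpha\|Tg\|_\mathcal{V}^2\geq 0$, with equality only if $g=0$).

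The key analytical ingredient is compactness of $T\colon\mathcal{H}\to\mathcal{H}$, which requires the embedding $\mathcal{V}\hookrightarrow\mathcal{H}$ to be compact; I would treat this (or equivalently the discreteness of the spectrum) as a standing hypothesis implicit in the setup of $\S$\ref{sect:gen-setting}, since without it the conclusion $\lambda_n\to\infty$ fails in general. Granting compactness, the Hilbert--Schmidt theorem furnishes an $\mathcal{H}$-orthonormal basis $\{e_n\}_{n\in\mathbb{N}}$ of eigenvectors of $T$ with strictly positive eigenvalues $\mu_n\to 0$. Setting $\lambda_n:=1/\mu_n$ and reindexing in nondecreasing order gives $\lambda_n\to\infty$, and the identity $a(e_n,v)=\lambda_n(e_n,v)_{\mathcal{H}}$ for all $v\in\mathcal{V}$ exactly reproduces \eqref{variational-ev}. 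Testing this with $v=e_n$ and combining coercivity with the continuity constant of the embedding $\mathcal{V}\hookrightarrow\mathcal{H}$ yields the claimed uniform positive lower bound on the $\lambda_n$.

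The $\mathcal{H}$-Parseval formula \eqref{Fourier-Parseval-H} is then immediate from completeness of $\{e_n\}$ in $\mathcal{H}$. For \eqref{Fourier-Parseval-V} I would check directly that the rescaled family $\tilde e_n:=e_n/\sqrt{\lambda_n}$ is $a$-orthonormal, since $a(\tilde e_n,\tilde e_m)=(\lambda_n/\sqrt{\lambda_n\lambda_m})(e_n,e_m)_{\mathcal{H}}=\delta_{nm}$. Totality of $\{\tilde e_n\}$ in $\mathcal{V}$ with respect to the equivalent $a$-norm follows because any $v\in\mathcal{V}$ that is $a$-orthogonal to every $\tilde e_n$ satisfies $\lambda_n(v,e_n)_{\mathcal{H}}=0$ for all $n$, hence $v=0$ in $\mathcal{H}$ and therefore in $\mathcal{V}$ by density/injectivity of the embedding.

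Finally, the characterization \eqref{fastDecay} drops out of comparing the two Parseval identities: using $a(v,e_n)=\lambda_n(v,e_n)_{\mathcal{H}}$ for $v\in\mathcal{V}$ one obtains $a(v,\tilde e_n)=\sqrt{\lambda_n}\,(v,e_n)_{\mathcal{H}}$, whence $\|v\|_a^2=\sum_{n}\lambda_n|(v,e_n)_{\mathcal{H}}|^2$; conversely, if $h\in\mathcal{H}$ satisfies $\sum_n\lambda_n|(h,e_n)_{\mathcal{H}}|^2<\infty$, then the partial sums $\sum_{n\leq N}(h,e_n)_{\mathcal{H}}e_n$ form a Cauchy sequence in the $a$-norm by $a$-orthogonality of the $e_n$, and their $\mathcal{V}$-limit must coincide with $h$ by $\mathcal{H}$-Parseval. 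The only genuine subtlety I anticipate is the compactness hypothesis on $\mathcal{V}\hookrightarrow\mathcal{H}$; the remainder is essentially bookkeeping built on top of the classical Hilbert--Schmidt theorem.
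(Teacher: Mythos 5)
The paper does not actually prove this lemma; it cites Reed \& Simon, Zeidler, Brezis, and Figueroa--S\"uli and stops. Your proposal supplies the argument those sources carry out: construct the solution operator $T\colon\mathcal{H}\to\mathcal{V}\hookrightarrow\mathcal{H}$ via Lax--Milgram, verify it is self-adjoint, positive, and injective, invoke the spectral theorem for compact self-adjoint operators, and translate eigenpairs of $T$ into eigenpairs of the variational problem by setting $\lambda_n=1/\mu_n$. All the individual computations you sketch --- self-adjointness $(Tg,h)_{\mathcal{H}}=a(Th,Tg)=a(Tg,Th)=(g,Th)_{\mathcal{H}}$, positivity from coercivity, the lower bound $\lambda_n=a(e_n,e_n)\ge\alpha\|e_n\|_{\mathcal{V}}^2\ge(\alpha/C^2)$, $a$-orthonormality of $e_n/\sqrt{\lambda_n}$, totality via density/injectivity of the embedding, and the two-sided characterization of $\mathcal{V}$ by square-summability of $\lambda_n\,(h,e_n)_{\mathcal{H}}^2$ --- are correct.

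Your flag on compactness is the substantive point, and you are right. As written, the lemma hypothesizes only that $\mathcal{V}$ is continuously and densely embedded in $\mathcal{H}$; that is not enough. Take $\mathcal{V}=\mathcal{H}$ and $a=(\cdot,\cdot)_{\mathcal{H}}$: every hypothesis listed in the lemma holds, yet the eigenvalue problem reduces to $(e,v)_{\mathcal{H}}=\lambda(e,v)_{\mathcal{H}}$, whose spectrum is the single point $\{1\}$, with no increasing sequence $\lambda_n\to\infty$ and no discrete eigenbasis. Compactness of the embedding $\mathcal{V}\hookrightarrow\mathcal{H}$ (equivalently, compactness of $T$ on $\mathcal{H}$) is a necessary hypothesis that the lemma omits; it is satisfied in every instance the paper uses --- $H^1$-type spaces on bounded factor domains $D_j$ embed compactly in $L_2(D_j)$ by Rellich--Kondrachov, and the weighted Fokker--Planck setting in the cited Figueroa--S\"uli work also has the requisite compactness --- but it should have been stated. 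Treating it as a standing hypothesis, as you propose, is the correct and minimal repair; your proof is sound once it is added.
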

\begin{proof}
The proofs of the stated results can be partially found in
textbooks on functional analysis (see, for example, Theorem VI.15 in Reed \& Simon \cite{ReedSimon}
or Section 4.2 in Zeidler \cite{Zeidler}). A version of the proof for the special
case in which $\mathcal{V}$ and $\mathcal{H}$ are standard Sobolev spaces is contained in
Section~IX.8 of Brezis \cite{Brezis}; using the abstract results
in Chapter~VI of \cite{Brezis}, the result in Section~IX.8 of \cite{Brezis}
can be easily adapted to the setting of the present theorem.
For a detailed proof we refer to Lemmas 15 and 16 in  \cite{FS:Arxiv}.
\end{proof}

\smallskip

It follows from \eqref{Di-ell} and Lemma \ref{lem:abstractEV} that for each $j\in  \{1,\ldots,d\}$ there exists
an eigensystem $(e_{j,n})_{n\in \N}\subset  H_j $ in the factor space $H_j$, with the properties
\beqn
\label{eigenj}
\langle e_{j,n},e_{j,m}\rangle  =\delta_{n,m},\quad {\cB}_je_{j,n}=\lambda_{j,n}e_{j,n},\quad n, m\in \N,\,\,j=1,\dots,d,
\eeqn
where $\lambda_{j,n}\geq \lambda_0>0$ are increasing in $n$, and $\lambda_{j,n} \rightarrow \infty$ as $n \rightarrow \infty$ for each $j=1,\dots,d$.   Therefore, by  \eref{tensor-sum},
\beqn
\label{eigen}
e_\nu := e_{1,\nu_1}\otimes \cdots \otimes e_{d,\nu_d},\quad \lambda_\nu:=\lambda_{1,\nu_1}+\cdots +
\lambda_{d,\nu_d},\quad \nu\in \N^d,
\eeqn
satisfy
\beqn
\label{eigenD}
\cB e_\nu = \lambda_\nu e_\nu,\quad \mathfrak{b}(\lambda_\nu^{-1/2} e_\nu,\lambda_{\mu}^{-1/2}e_\mu)=
\delta_{\nu,\mu},\quad\nu,\mu \in\N^d,
\eeqn
and hence
\beqn
\label{eigenexp}
{\rm e}^{-\cB}e_\nu = {\rm e}^{-\lambda_\nu}e_\nu = \bigotimes_{j=1}^d {\rm e}^{-\lambda_{j,\nu_j}}e_{j,\nu_j},\quad \nu\in \N^d.
\eeqn

Since  $\HH$ is dense  in $\HH'$,  the linear span of the system of eigenfunctions $(e_\nu)_{\nu\in\N^d}$ is also dense in $\HH'$.
We now define the fractional-order {(generalized Sobolev)} space $\HH^s$, $s\ge 0$, as the set of all $v\in\HH'$ for which
\beqn
\label{Hs-norms}
\|v\|_s^2:=\|v\|^2_{\HH^s}:=  \sum_{\nu\in\N^d}\lambda_\nu^s|\langle v,e_\nu\rangle|^2 < \infty.
\eeqn
In particular, $\|\cdot\|_{\HH}= \|\cdot\|_1$.  Furthermore, when $s<0$ we define the spaces $\HH^{s}:= (\HH^{-s})'$,  by duality.  It is easy to see that their norms are also given by \eref{Hs-norms}.   The spaces
\beqn
\label{Hs}
\HH^s := \{v\in \HH' : \|v\|_s <\infty\},\quad s\in\R,
\eeqn
form a scale of  {separable} Hilbert spaces.   {Note further that, thanks to the orthogonality property in \eqref{eigenj} and the definition of the norm \eqref{Hs-norms}, $e_\nu \in \HH^s$
for any $\nu \in \mathbb{N}^d$ and any $s \in \mathbb{R}$.} In classical settings, when $\mathfrak{B}$ is an elliptic differential operator,
the spaces $\HH^s$ agree with Sobolev spaces for a certain range of $s$, which depends
on the geometry of the domain $D$ and the coefficients in the operator.

Having introduced this scale of spaces, let us note that the operator $\cB$, while initially defined on $\HH$ can now be extended to all of the spaces $\HH^s$, $s\in\R$:  if $v\in\HH^s$, then
\be
\label{extB}
\cB v:= \sum_{\nu\in\N^d} \lambda_\nu\langle v,e_\nu\rangle e_\nu.
\ee
So $\cB$ is an isometry between $\HH^t$ and $\HH^{t-2}$, for all $t\in\R$,
\be
\label{isometry}
\|\cB v\|_{t-2}=\|v\|_{t}.
\ee
{Hence,  for any $t\in\R$, and any $f\in \HH^t$, the variational problem:
\beqn
\label{varprob1}
\mbox{Find $u \in \HH^{t+2}$ such that}\quad \mathfrak{b}(u,v)=\dup{f,v}\quad \forall\, v\in  {\HH^{-t}},
\eeqn
has a unique solution.}
It will be convenient to interpret \eref{varprob1} as an operator equation
\beqn
\label{operatoreq}
\cB u=f.
\eeqn

The next remark records some related useful facts.
 \begin{remark}
\label{remcons}
For any $v\in \HH^s$, {$s\in\R$}, one has
\begin{eqnarray}
\label{Fourier1}
v &= &\sum_{\nu\in \N^d}\langle v,e_\nu\rangle  e_\nu, \quad \mbox{in $\HH^s$},
 \\
\label{Fourier2}
\cB^{-1}v &=& \sum_{\nu\in \N^d}\lambda_\nu^{-1}\langle v,e_\nu\rangle  e_\nu ,\quad \mbox{in $\HH^{s+2}$},\\
\label{Fourier3}
  {\rm e}^{-\cB}v &= &\sum_{\nu\in\N^d} {\rm e}^{-\lambda_\nu}\langle v, e_\nu\rangle e_\nu,\quad  \mbox{in $\HH^s$.
  }
 \end{eqnarray}
 \end{remark}

\subsection{Rank-one tensors}

We proceed to establish in this section some facts concerning rank-one tensors  that will be used several times later in the paper.
The following two observations   relate the regularity of a rank-one tensor to the regularity of its factors.

\begin{lemma}
\label{lem:H-1}
 Let $s\ge 0$.  
A rank-one tensor $\tau=\tau_1\otimes \cdots\otimes \tau_d$ belongs to $\HH^s$ if and only if
$\tau_j \in H^{s}_j$,  $j=1,\dots,d$.  Moreover, when $\tau\in \HH^s$, there is a representation $\tau=\tau_1\otimes\cdots\otimes \tau_d$ for which
 \be
\label{taunormHs}
\max_{j=1,\ldots,d}
  \|\tau_j\|_{H^s_j} \Big(\prod_{i\neq j}\|\tau_i\|_{L_2(D_i)}\Big)\le \|\tau\|_s \le  (d^{\max\{0,s-1\}})^{1/2}\sum_{j=1}^d\|\tau_j\|_{H^s_j}
  \Big(\prod_{i\neq j}\|\tau_i\|_{L_2(D_i)}\Big)\rd{.}
\ee
  \end{lemma}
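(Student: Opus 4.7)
The plan is to work entirely in the eigenbasis $(e_\nu)$ introduced in Section~\ref{sect:spec}. Since each factor $\tau_j \in L_2(D_j)$ (a necessary condition for $\tau \in \HH^s$ when the factors are nonzero), we may expand $\tau_j = \sum_{n} c_{j,n} e_{j,n}$, where $c_{j,n} = \langle \tau_j, e_{j,n}\rangle$. Using the tensor-product structure of $(e_\nu)$ and the bi-orthogonality inherited from \eqref{eigenj}, one obtains $\langle \tau, e_\nu\rangle = c_{1,\nu_1}\cdots c_{d,\nu_d}$, so that
\begin{equation*}
\|\tau\|_s^2 = \sum_{\nu\in\N^d} \lambda_\nu^s \,|c_{1,\nu_1}|^2\cdots |c_{d,\nu_d}|^2,
\end{equation*}
while, on the component side, $\|\tau_j\|_{H^s_j}^2 = \sum_n \lambda_{j,n}^s |c_{j,n}|^2$ and $\|\tau_j\|_{L_2(D_j)}^2 = \sum_n |c_{j,n}|^2$. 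The whole proof then reduces to comparing $\lambda_\nu^s = (\lambda_{1,\nu_1}+\cdots+\lambda_{d,\nu_d})^s$ with the individual $\lambda_{j,\nu_j}^s$.

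For the upper bound I would invoke the elementary inequality
\begin{equation*}
(a_1+\cdots+a_d)^s \leq d^{\max\{0,s-1\}} (a_1^s+\cdots+a_d^s), \qquad a_j\geq 0,\; s\geq 0,
\end{equation*}
which follows from subadditivity of $t\mapsto t^s$ for $s\in[0,1]$ and from convexity (Jensen) for $s\geq 1$. Substituting this into the series for $\|\tau\|_s^2$ and interchanging sums factorizes the result into $d$ product terms, one for each $j$, giving
\begin{equation*}
\|\tau\|_s^2 \leq d^{\max\{0,s-1\}} \sum_{j=1}^d \|\tau_j\|_{H^s_j}^2 \prod_{i\neq j}\|\tau_i\|_{L_2(D_i)}^2.
\end{equation*}
A final application of $\sqrt{\sum a_j^2}\leq \sum a_j$ yields the right-hand side of \eqref{taunormHs}.

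For the lower bound I use the trivial monotonicity $\lambda_{j,\nu_j}\leq \lambda_\nu$ (all $\lambda_{i,\nu_i}\geq \lambda_0>0$), so that $\lambda_{j,\nu_j}^s \leq \lambda_\nu^s$ for any fixed $j$ and any $s\geq 0$. Keeping only that single term under the sum and again factorizing gives
\begin{equation*}
\|\tau\|_s^2 \geq \|\tau_j\|_{H^s_j}^2 \prod_{i\neq j}\|\tau_i\|_{L_2(D_i)}^2, \qquad j=1,\ldots,d,
\end{equation*}
and taking the maximum over $j$ and a square root produces the left-hand side of \eqref{taunormHs}. The ``if and only if'' statement is then immediate: the upper bound gives $\tau_j\in H^s_j$ for all $j$ $\Rightarrow$ $\tau\in\HH^s$, whereas if $\tau\in\HH^s$ with all factors nonzero, the lower bound forces each $\|\tau_j\|_{H^s_j}$ to be finite. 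Finally, regarding the remark that ``there is a representation'' for which the estimate holds: the products $\|\tau_j\|_{H^s_j}\prod_{i\neq j}\|\tau_i\|_{L_2(D_i)}$ are invariant under the scaling freedom $\tau_1\to c\tau_1$, $\tau_k\to\tau_k/c$ (for any $k\neq 1$), so in fact \emph{any} representation works; this makes the normalization issue a non-issue. The only mildly delicate point in the whole argument is the unified handling of the regimes $s\in[0,1]$ and $s\geq 1$, which the single constant $d^{\max\{0,s-1\}}$ absorbs cleanly.
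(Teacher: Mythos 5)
Your argument is correct and matches the paper's proof step for step: expand in the eigenbasis so that $\|\tau\|_s^2 = \sum_\nu \lambda_\nu^s \prod_j |\langle\tau_j,e_{j,\nu_j}\rangle|^2$, apply $(a_1+\cdots+a_d)^s \le d^{\max\{0,s-1\}}(a_1^s+\cdots+a_d^s)$ and factorize for the upper bound, and use $\lambda_{j,\nu_j}^s\le\lambda_\nu^s$ and factorize for the lower bound, finishing with $\ell_2\le\ell_1$. Your closing remark that the products $\|\tau_j\|_{H^s_j}\prod_{i\ne j}\|\tau_i\|_{L_2(D_i)}$ are invariant under the rescaling gauge freedom of a rank-one representation is a small but genuine clarification: it shows the bound \eqref{taunormHs} in fact holds for \emph{every} nonzero representation, not merely for one, which the paper's phrasing leaves implicit.
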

\begin{proof}
We begin by noting that \eqref{taunormHs} holds trivially for $\tau =0$. Let us therefore assume that 
 $\tau\in \HH^s\setminus\{0\}$, with $s \geq 0$.
Assume first that  each $\tau_j\in H_j^s$, $j=1,\dots,d$.
From the definition   \eref{Hs-norms},   we have that
\begin{eqnarray*}
\|\tau\|_s ^2& = & \sum_{\nu\in\N^d}(\lambda_{1,\nu_1}+\cdots + \lambda_{d,\nu_d})^s\dup{\tau_1,e_{1,\nu_1}}^2\cdots \dup{\tau_d,e_{d,\nu_d}}^2\\
& \leq & d^{\max\{0,s-1\}}\sum_{\nu\in\N^d}(\lambda_{1,\nu_1}^s+\cdots + \lambda_{d,\nu_d}^s)\dup{\tau_1,e_{1,\nu_1}}^2\cdots \dup{\tau_d,e_{d,\nu_d}}^2\\
&=& d^{\max\{0,s-1\}}\sum_{j=1}^d \Big(\sum_{\nu_j=1}^\infty \lambda_{j,\nu_j}^s\dup{\tau_j,e_{j,\nu_j}}^2\Big)\Big(\prod_{i\neq j}\|\tau_i\|^2_{0}\Big)
= d^{\max\{0,s-1\}}
\sum_{j=1}^d\|\tau_j\|_s^2 \Big(\prod_{i\neq j}\|\tau_i\|^2_{0}\Big).
\end{eqnarray*}
Now, we can replace the right-hand side of the last inequality by the right-hand side of \eref{taunormHs}
because the $\ell_2$ norm does not exceed the $\ell_1$ norm.  Thus, $\tau\in \HH^s$ and we have established the second inequality in \eref{taunormHs}.

 {Concerning, the first inequality in \eqref{taunormHs},    for any fixed} $j\in\{1,\dots,d\}$, we have
\begin{eqnarray*}
\|\tau\|_s ^2& = & \sum_{\nu\in\N^d}(\lambda_{1,\nu_1}+\cdots + \lambda_{d,\nu_d})^s\dup{\tau_1,e_{1,\nu_1}}^2\cdots \dup{\tau_d,e_{d,\nu_d}}^2\\
&\ge&\sum_{\nu\in\N^d}\lambda_{j,\nu_j}^s\dup{\tau_1,e_{1,\nu_1}}^2\cdots \dup{\tau_d,e_{d,\nu_d}}^2\\
&\ge&\|\tau_j\|^2_{H_j^s}\Big(\prod_{i\neq j}\|\tau_i\|^2_{0}\Big) = \|\tau_j\|^2_{H_j^s}\Big(\prod_{i\neq j}\|\tau_i\|^2_{L_2(D_i)}\Big),
\end{eqnarray*}
which yields \eref{taunormHs}  {for $\tau\in \HH^s\setminus\{0\}$, with $s \geq 0$}. 
\end{proof}
 
\smallskip

According to Lemma \ref{lem:H-1}, 
$\tau \in \HH^s$ for $s \geq 0$ if and only if $\tau \in \HH^{s,\ldots,s}:= \otimes_{j=1}^d H^s_j$; for related results we refer to \cite{SU}, where Besov and Sobolev spaces of dominating mixed smoothness are shown to be tensor products of Besov and Sobolev spaces of univariate functions.

 We record the following consequence of the above observations  in particular  for later use in
\S \ref{sec:proofs}.
\begin{corollary}
\label{cor:duality}
For any $s\in \mathbb{R}$ there exist\rd{s} a constant  $C$ such that, for any $\tau =\tau_1\otimes \cdots \otimes \tau_d \in \HH^{-s}$,
\beqn
\label{crossnorm}
\prod_{j=1}^d \|\tau_j\|_{H^{-s}_j} \leq C \|\tau\|_{-s}.
\eeqn
\end{corollary}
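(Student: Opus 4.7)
My plan is to exploit the tensor factorization of the eigenbasis from Lemma \ref{lem:abstractEV} and reduce the cross-norm estimate to a pointwise inequality between eigenvalues. Since $e_\nu = \bigotimes_{j=1}^d e_{j,\nu_j}$, a rank-one tensor $\tau=\tau_1\otimes\cdots\otimes\tau_d$ satisfies $\langle\tau,e_\nu\rangle=\prod_j\langle\tau_j,e_{j,\nu_j}\rangle$. Multiplying together the factorwise Parseval identities
\[
 \|\tau_j\|_{H^{-s}_j}^2 \;=\; \sum_{\nu_j\in\N}\lambda_{j,\nu_j}^{-s}\langle\tau_j,e_{j,\nu_j}\rangle^2,\qquad j=1,\ldots,d,
\]
and comparing with \eqref{Hs-norms} one arrives at
\[
\prod_{j=1}^d\|\tau_j\|_{H^{-s}_j}^2 \;=\; \sum_{\nu\in\N^d}\Big(\prod_{j=1}^d \lambda_{j,\nu_j}^{-s}\Big)\prod_{j=1}^d\langle\tau_j,e_{j,\nu_j}\rangle^2,\qquad \|\tau\|_{-s}^2 \;=\; \sum_{\nu\in\N^d}\lambda_\nu^{-s}\prod_{j=1}^d\langle\tau_j,e_{j,\nu_j}\rangle^2.
\]
It is therefore enough to produce a constant $C$ for which the pointwise bound $\prod_j\lambda_{j,\nu_j}^{-s}\le C^2\lambda_\nu^{-s}$ holds uniformly in $\nu\in\N^d$, and then sum termwise.

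For this eigenvalue comparison I would use the uniform positivity $\lambda_{j,n}\ge\lambda_0>0$ furnished by Lemma \ref{lem:abstractEV}. Since $\lambda_\nu=\sum_j\lambda_{j,\nu_j}$, the elementary identity
\[
\lambda_\nu \;=\; \Big(\prod_{j=1}^d\lambda_{j,\nu_j}\Big)\sum_{j=1}^d\frac{1}{\prod_{i\neq j}\lambda_{i,\nu_i}}
\]
combined with $\prod_{i\neq j}\lambda_{i,\nu_i}\ge\lambda_0^{d-1}$ yields $\lambda_\nu\le d\lambda_0^{-(d-1)}\prod_j\lambda_{j,\nu_j}$. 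Raising to the $s$-th power, valid for $s\ge 0$, gives the desired pointwise comparison with $C^2 = d^s\lambda_0^{-s(d-1)}$; substituting this back and taking square roots proves \eqref{crossnorm} with $C = d^{s/2}\lambda_0^{-s(d-1)/2}$.

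The main obstacle I foresee is that this pointwise argument only runs in the correct direction when $s\ge 0$, i.e.\ when the exponent $-s$ on both sides of \eqref{crossnorm} is nonpositive. For $s<0$ one would need a reverse inequality of the form $\lambda_\nu\gtrsim\prod_j\lambda_{j,\nu_j}$, which is plainly false: choosing $\tau=e_\nu$ with $\lambda_{j,\nu_j}=M$ for every $j$, the ratio of the two sides of \eqref{crossnorm} equals $M^{-s(d-1)/2}d^{s/2}$, which blows up as $M\to\infty$ when $s<0$. I therefore expect the corollary to be invoked only in the regime $s\ge 0$ in \S\ref{sec:proofs}, with the constant $C$ constructed above depending only on $s$, $d$, and the spectral-gap parameter $\lambda_0$.
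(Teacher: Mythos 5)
Your argument is correct for $s\ge 0$ and takes a genuinely different route from the paper's. The paper's proof is abstract: it starts from the continuous embedding $\HH^{s,\ldots,s}\hookrightarrow\HH^s$ for $s\ge 0$ (essentially the second inequality of Lemma~\ref{lem:H-1}), dualizes it to obtain $\HH^{-s}=(\HH^s)'\hookrightarrow(\HH^{s,\ldots,s})'=H_1^{-s}\otimes\cdots\otimes H_d^{-s}$, and then reads off \eqref{crossnorm} because the dual space is a Hilbert tensor-product space whose cross-norm on rank-one elements equals $\prod_j\|\tau_j\|_{H_j^{-s}}$. You bypass duality entirely, factoring the Fourier coefficients and reducing everything to the pointwise eigenvalue inequality $\prod_j\lambda_{j,\nu_j}^{-s}\le C^2\lambda_\nu^{-s}$, which you verify via $\lambda_\nu\le d\lambda_0^{-(d-1)}\prod_j\lambda_{j,\nu_j}$. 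The paper's route is shorter once the duality framework is taken for granted; yours is more elementary, makes transparent exactly where $s\ge 0$ enters (in raising the eigenvalue inequality to the $s$-th power), and yields the explicit constant $C=d^{s/2}\lambda_0^{-s(d-1)/2}$. Your cautionary note about $s<0$ is also correct and worth flagging: although the corollary is stated ``for any $s\in\mathbb{R}$'', the paper's own proof begins with ``for $s\ge 0$'' and \eqref{crossnorm} is in fact false when $s<0$, as your example $\tau=e_\nu$ with all $\lambda_{j,\nu_j}=M\to\infty$ demonstrates. Consistently, every subsequent invocation of Corollary~\ref{cor:duality} in the paper (in the proof of Lemma~\ref{lem:compact} with $s'\le 0$, and in the proof of Lemma~\ref{lem:fullB} with $t'<0$) uses it only with $-s\le 0$, so the restriction you identify is exactly the correct one.
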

\begin{proof}
Since, for $s\geq 0$, $\HH^{s,\ldots,s}$  is  continuously embedded in  $\HH^s$\rd{,} there exists a constant $C$
such that we have
$$
\|\tau \|_s\leq C\|\tau\|_{\HH^{s,\ldots,s}} = \prod_{j=1}^d \|\tau_j\|_{H_j^{s}}.
$$
By duality, $(\HH^s)'=\HH^{-s}$  is continuously embedded in $(\HH^{s,\ldots,s})'$. Since
$$
(\HH^{s,\ldots,s})' = (H^s_1 \otimes \cdots \otimes H^s_d)' = (H^s_1)'\otimes \cdots \otimes (H^s_d)' = H_1^{-s} \otimes \cdots \otimes H^{-s}_d
$$
is a tensor-product Hilbert space endowed with the corresponding cross-norm, the claim follows.

\end{proof}

\smallskip

While the collection of all rank-one tensors in $\HH^s$ whose $\HH^s$ norm is uniformly bounded is not compact
in $\HH^s$, one has the following
consequence of Lemma \ref{lem:H-1}.

\begin{lemma}
\label{lem:compact} {For any  $C>0$} and any $s'\in \R$,
the collection
$$
\cT(s',C) := \{\tau = \tau_1\otimes \cdots \otimes \tau_d \in \HH^{s'}\,:\, \|\tau\|_{{s'}}\leq C\}
$$
is  a compact subset of $\HH^s$ provided $s'>s$.
\end{lemma}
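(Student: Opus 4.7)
The strategy is to extract a subsequence that converges strongly in $\HH^s$ to a rank-one tensor with $\HH^{s'}$-norm at most $C$. Precompactness is straightforward; the work lies in verifying that the limit retains its rank-one structure.

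First, the eigenvalues $\lambda_\nu=\lambda_{1,\nu_1}+\cdots+\lambda_{d,\nu_d}$ tend to infinity as $|\nu|\to\infty$, since each $\lambda_{j,n}\to\infty$ by Lemma~\ref{lem:abstractEV}. By \eqref{Hs-norms}, the embedding $\HH^{s'}\hookrightarrow\HH^s$ amounts, in the eigenbasis, to multiplication of Fourier coefficients by the null sequence $\{\lambda_\nu^{(s-s')/2}\}$ and is therefore compact. Any sequence $\tau^{(n)}=\tau_1^{(n)}\otimes\cdots\otimes\tau_d^{(n)}$ in $\cT(s',C)$ is bounded in $\HH^{s'}$, so after extraction it converges weakly in $\HH^{s'}$ and strongly in $\HH^s$ to some $\tau$, with $\|\tau\|_{s'}\le C$ by weak lower semicontinuity of the norm.

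Next I would control each factor individually. Corollary~\ref{cor:duality} gives $\prod_j\|\tau_j^{(n)}\|_{H_j^{s'}}\le C'\|\tau^{(n)}\|_{s'}\le C'C$, and the representational freedom $\otimes_j\tau_j=\otimes_j(\alpha_j\tau_j)$ with $\prod_j\alpha_j=1$ allows me to equalize the $H_j^{s'}$-norms so that each factor is uniformly bounded by $(C'C)^{1/d}$. Applying the same compactness argument to the univariate scales $H_j^{s'}\hookrightarrow H_j^t$ for a fixed $t<s'$, a diagonal extraction yields a further subsequence with $\tau_j^{(n)}\to\tau_j$ strongly in $H_j^t$ for each $j=1,\dots,d$. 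To identify $\tau$ as a rank-one tensor I would test against the eigenfunctions $e_\nu=e_{1,\nu_1}\otimes\cdots\otimes e_{d,\nu_d}\in\bigcap_{r\in\R}\HH^r$: the rank-one structure of $\tau^{(n)}$ gives
\[
\langle\tau^{(n)},e_\nu\rangle=\prod_{j=1}^d\langle\tau_j^{(n)},e_{j,\nu_j}\rangle\longrightarrow\prod_{j=1}^d\langle\tau_j,e_{j,\nu_j}\rangle,
\]
since each $e_{j,\nu_j}\in H_j^{-t}$, while strong convergence in $\HH^s$ gives $\langle\tau^{(n)},e_\nu\rangle\to\langle\tau,e_\nu\rangle$. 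Hence $\tau$ shares every Fourier coefficient with $\tau_1\otimes\cdots\otimes\tau_d$, and Remark~\ref{remcons} identifies $\tau=\tau_1\otimes\cdots\otimes\tau_d\in\cT(s',C)$.

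The step I expect to be most delicate is this identification of the limit as rank-one. A direct norm-continuity argument would require a cross-norm bound $\|\otimes_j\sigma_j\|_s\lesssim\prod_j\|\sigma_j\|_{H_j^t}$, which is problematic: Corollary~\ref{cor:duality} supplies only the reverse inequality, and no such matching-index cross-norm upper bound can hold in general when $s<0$. Testing against the eigenfunctions $e_\nu$ bypasses this obstruction by reducing the required tensor continuity to the multiplicative behaviour of the dual pairing on individual Fourier coefficients, crucially exploiting that the $e_\nu$ lie in every $\HH^r$.
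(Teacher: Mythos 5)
Your identification step---testing the $\HH^s$-limit against the eigenfunctions $e_\nu$ and using the multiplicativity of the pairing on rank-one tensors---is a genuinely different argument from the paper's, and it cleanly sidesteps the matching-index cross-norm upper bound that you correctly flag as unavailable when $s<0$. (The paper instead bounds the telescoping difference \eqref{ae24}: in $\HH^s$ via the second inequality of \eqref{taunormHs} when it can reduce to $s'>s\geq 0$, and in the tensor-product cross-norm $\|\cdot\|_{\HH^{s,\ldots,s}}$ together with the embedding $\HH^s\hookrightarrow\HH^{s,\ldots,s}$ when $s'\leq 0$.) Your precompactness step, the common weak-$\HH^{s'}$ / strong-$\HH^s$ limit, and the weak lower semicontinuity of $\|\cdot\|_{s'}$ are all correct.

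The normalization step, however, has a genuine gap when $s'>0$. You invoke Corollary~\ref{cor:duality} at exponent $s'$ to get $\prod_j\|\tau_j^{(n)}\|_{H_j^{s'}}\leq C'\|\tau^{(n)}\|_{s'}$, but the proof of that corollary dualizes the cross-norm embedding $\HH^{s,\ldots,s}\hookrightarrow\HH^s$, which is available only for $s\geq 0$; hence the conclusion is established only for \emph{nonpositive} exponent $-s$, and the phrase ``for any $s\in\R$'' in the corollary overstates what its proof delivers. Indeed the inequality fails for positive exponent: take $d=2$ and $\tau_j=e_{j,1}+e_{j,N}$; then $\prod_j\|\tau_j\|_{H_j^{s'}}^2\sim\lambda_{j,N}^{2s'}$ while $\|\tau_1\otimes\tau_2\|_{s'}^2\sim\lambda_{j,N}^{s'}$ as $N\to\infty$, so no constant works. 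Consequently, for $s'>0$ equalizing the $H_j^{s'}$-norms does not give a uniform bound on the factors. The repair is the paper's normalization for that regime: equalize $\|\tau_j^{(n)}\|_0$ instead, so $\prod_i\|\tau_i^{(n)}\|_0=\|\tau^{(n)}\|_0$; bound this above via $\|\tau^{(n)}\|_{s'}\leq C$, and (reducing w.l.o.g.\ to $s\geq 0$ and passing to a subsequence on which the $\HH^s$-limit is nonzero) bound it below away from zero using the interpolation $\|\tau^{(n)}\|_s\leq\|\tau^{(n)}\|_0^{1-s/s'}\|\tau^{(n)}\|_{s'}^{s/s'}$; then the first inequality of \eqref{taunormHs} yields $\|\tau_j^{(n)}\|_{H_j^{s'}}\leq C''$. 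With that fix in place, the diagonal extraction of the factor subsequences and your eigenfunction test go through and give a valid alternative proof.
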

\begin{proof}  {It is easy to see (see Lemma \ref{soblemma}) that any closed bounded ball $B$ of $\HH^{s'}$  is a compact subset of $\HH^s$.  Therefore, we only need to show that
the set $\cT(s',C)$ is closed in $\HH^s$.  Let $\tau^{(n)}$ be a sequence from this set\rd{,} which converges in the topology of $\HH^s$ to a function $g\in \HH^s$.   We need only show that $g=g_1\otimes\cdots\otimes g_d$ for some $g_j\in H_j^s$.}
If $g=0$, then clearly this limit function is in  $\cT(s',C)$.     If $\|g\|_{s}>0$,
then $\|\tau^{(n)}\|_{s}\ge C>0$ for $n$ sufficiently large.

{Consider now first the case $s'>0$.} Since the norm topology of the spaces $\HH^s$ gets weaker as $s$ decreases, it is sufficient to prove the Lemma in this case {for $s'>s \geq 0$}.
We can assume without loss of generality that for each $n$  the norms  $\|\tau_j^{(n)}\|_0$ are all equal.    {It follows from Lemma \ref{lem:H-1} for $s'\geq 0$}, that each of the components
 {satisfies} $\|\tau^{(n)}_j\|_{s'}\le C'$ for an absolute constant $C'$.
Hence, each of the sequences $(\tau_j^{(n)})_{n\ge 1}$ is compact in $H_j^s$, $j=1,\dots,d$.  A common subsequence of
each of them, indexed by $(n_k)$, converges to a limit $g_j\in H_j^s$, with $\|g_j\|_{H_j^s}\le C'$, $j=1,\dots,d$.

We claim that $g$ is equal to $g_1\otimes\cdots \otimes g_d$.   Indeed, for any $\tau^{(n)} $, we can write
\be
\label{ae24}
(g_1\otimes\cdots \otimes g_d)-(\tau_1^{(n)}\otimes\cdots \otimes \tau_d^{(n)})= \sum_{j=1}^d  g_1\otimes\cdots
\otimes g_{j-1}\otimes (g_j-\tau_j^{(n)})\otimes\tau_{j+1}^{(n)}\otimes   \cdots \otimes \tau_d^{(n)},
\ee
{with an obvious interpretation of the summands for $j\in \{1,\rd{\ldots,}d\}$}.
 Given the fact that for each of the components $g_j$, we have $\|g_j\|_{L_2(D_j)}\le \|g_j\|_{H_j^s}\le C'$, and a similar bound for the components of the $\tau^{(n)}$, we  {infer from \eref{taunormHs} in Lemma \ref{lem:H-1}} that the $\HH^s$ norm of each of the terms in the sum appearing \eref{ae24} tends to zero as $n\to \infty$.  {This proves the claim for $s> 0$.}

{When $s'\leq 0$,   we renormalize so that for each $n$, all of the norms $\|\tau_j^{(n)}\|_{H_j^{s'}}$,
$j=1,\dots,d$,  are equal.
Then, \eref{crossnorm} in Corollary \ref{cor:duality} gives that all of these norms have a uniform bound.  Hence, we again   derive  the existence of   subsequences
$(\tau_j^{(n)})_{n\ge 1}$ that converge in $H_j^s$ to respective limits $g_j$. To show that $g$ agrees with $g_1\otimes \cdots\otimes g_d$
it suffices to prove that $\|g_1\otimes \cdots\otimes g_d - \tau_1^{(n)}\otimes\cdots \otimes \tau_d^{(n)}\|_{\HH^{s,\ldots,s}}\to 0$, $n\to \infty$.
Employing again the decomposition \eref{ae24}, this in turn, follows by using that $\|\cdot\|_{\HH^{s,\ldots,s}}$ is a
tensor product norm. That} finishes the proof of the Lemma.
 \end{proof}

We now turn to the central question of this paper:   \textit{Can some formulation of tensor-sparsity, or more generally tensor-compressibility, help to break the curse of dimensionality when solving the variational problems \eref{varprob} and  \eref{varprob1}?}

\section{Tensor-Sparsity and Compressibility}\label{sec:ts}

  Numerical methods for solving partial differential
equations are based on some form of approximation.  In our case, we want to approximate the solution $u$ to
\eref{varprob} in the $\HH^1$ norm.  The simplest numerical methods utilize a sequence $(V_n)_{n=1}^\infty$ of {\em linear} spaces
with  $\dim(V_n)\sim n$ for the approximation.   Adaptive and more advanced numerical methods replace the $V_n$ by nonlinear spaces
$\Sigma_n$.   {Since the case of linear subspaces is subsumed by the nonlinear case,
we continue our discussion   in the nonlinear context.  We assume in the following that the elements of $\Sigma_n$ are described by $\sim n$ parameters.}

  To understand how well a potential numerical method built on $\Sigma_n$  could  perform, we need first to understand the approximation
  capabilities of   $\Sigma_n$.   {The following quantification of performance will be described in a general setting of approximation in a Banach space $X$ and therefore we assume that each $\Sigma_n\subset X$.}   For  any function $v\in X$, the approximation error
\be
\label{ae}
\sigma_n(v)_{X}:=\inf_{g\in \Sigma_n}\|v-g\|_{X}
\ee
tells us how well $v$ can be approximated by the elements of $\Sigma_n$.   In the case of most interest to us, $X=\HH^1$,  $\sigma_n(u)_{\HH^1}$ gives
the optimal performance, in computing $u$, that would be possible by any numerical method based on this form of approximation.
Of course, there is also the  problem  of  constructing a numerical algorithm with this level of performance, and proving that the implementation of
the algorithm can be achieved with $\mathcal{O}(n)$, or perhaps slightly more, computations.

Given a sequence $(\Sigma_n)_{n\ge 0}$, with $\Sigma_0:=\{0\}$, the approximation space $\cA^r:=\cA^r((\Sigma_n)_{n \geq 0},X)$ consists of all functions $v\in X$ such that
\be
\label{ac}
\sigma_n(v)_{X}\le M(n+1)^{-r},\quad n\ge 0,
\ee
and the smallest  such $M$ is the norm of $v$ for { the approximation space $\cA^r$}.
More generally, we have  the approximation classes  $\cA_q^r:=\cA_q^r((\Sigma_{n})_{n \geq 0},X)$, which are defined,
for any $0<q\le \infty$ and $r>0$, as the set  of all $v\in X$ such that
\be
\label{ac2}
\|v\|_{\cA_q^r((\Sigma_{n})_{n \geq 0},X)}:= \left\{\begin{array}{cl}
\bigg(\sum_{n=0}^\infty\, [(n+1)^r\sigma_n(v)_X]^q\frac{1}{n+1}\bigg)^{1/q} &\mbox{if $0 < q<\infty$},\\
{\displaystyle \sup_{n\ge 0}\, (n+1)^r\sigma_n(v)_{X}},& \mbox{if $q=\infty$},                                                           \end{array}\right.
 \ee
is finite.  So, $\cA^r=\cA^r_\infty$.

When $X$ is an $L_p$ space or a Sobolev space, the approximation spaces for classical methods based on
polynomials, splines or wavelets are well studied and are either completely characterized or very well understood through embeddings.  They turn out to be describable by  Sobolev spaces or  Besov spaces (see \cite{DNL}).
In particular, these known results show that classical methods of approximation suffer from the curse of dimensionality.   For example, membership {in} the approximation
space $\cA^r((\Sigma_n)_{n \geq 0}, \HH^1)$ for such methods requires the function $v$ to have $rd$ orders of smoothness
{in $\HH^1$}.
As $d$ increases, this requirement will not be satisfied for typical right-hand sides $f$ in \eqref{varprob}
that arise in applications.  {This means that numerical methods built on
such classical approximation spaces are not effective when dealing with high-dimensional problems of the form \eref{varprob}}.

One fairly recent viewpoint reflected by the discussion in the  {Introduction}, and the one taken here,  is that, in contrast to classical constructions, spaces $\Sigma_n$ built on suitable tensor formats
may  perform much more favorably on certain high-dimensional problems, and, indeed, break the curse of dimensionality for them.  However,  to the best of our knowledge no such rigorous results, in this direction, exist as of yet.    In this subsection, we put forward some natural possibilities for defining sparse tensor classes $\Sigma_n$,
subject to a budget of $\sim n$ parameters.   A later subsection  of this paper  will show that the nonlinear spaces $(\Sigma_n)_{n \geq 0}$ built on   these tensor constructions  effectively capture the solutions to the variational problem \eref{varprob}.  To show this effectiveness,  one has to understand what conditions on the right-hand  side $f$ guarantee that the solution $u$ is in an approximation class $\cA^r((\Sigma_n)_{n \geq 0}, \HH^1)$ for a reasonably large
value of $r$.    One can view any theorem that deduces membership of $u$ in  $\cA^r((\Sigma_n)_{n \geq 0}, \HH^1)$  from a suitable property of $f$ as a regularity theorem for the variational problem under consideration.  Such regularity results are then proved in \S \ref{sec:main}.

\subsection{Formulations of tensor-sparsity}\label{ssec:3.1}

In this section we introduce and compare several possible formulations of tensor-sparsity and tensor-compressibility.  The main property one seeks in such sparsity classes $\Sigma_n$ is  that the elements in $\Sigma_n$ should depend on $\sim n$ parameters.
 The common feature of these formulations is that they are based on low-dimensional eigensystems whose tensor products form a fixed {\em background basis}.
In later sections we will propose an alternative way of defining {\em tensor-sparsity}, which is independent of any specific background system.

   \subsubsection{$n$-term approximation}
   \label{ss:nterm}
   We first discuss the well-studied nonlinear approximation procedure of $n$-term approximation from a
basis $(\vphi_\nu)_{\nu \in \mathbb{N}^d}$.   In our setting,    we know that $(e_\nu)_{\nu\in\N^d}$
is a tensor basis for $\HH^t$, {$t\in \R$,}   and in particular for $\HH^1$.   In $n$-term approximation, the space $\Sigma_n$ consists of all functions $g=\sum_{\nu\in\Lambda}c_\nu e_\nu$ where $\Lambda\subset \N^d$ is a subset of indices with cardinality at most $n$.   The functions in $\Sigma_n$ are said to be sparse of order $n$. Sometimes one places further restrictions on $\Lambda$ to ease numerical implementation and the search for the set of the best $n$ co-ordinates.   For example, when $d=1$, a typical assumption is
that the indices must come from the set $\{1,\dots,n^A\}$ where $A$ is a fixed positive integer.

 Since $(e_\nu)_{\nu\in \N^d}$ is also a basis for $\HH^{t}$,  the same nonlinear space $\Sigma_n$ can be used to approximate the right-hand side $f$ of \eref{varprob}.  Let $\cB u=f$. If $g=\sum_{\nu\in\Lambda}c_\nu e_\nu $ is any element of $\Sigma_n$, then $\bar u:=\cB^{-1}g=\sum_{\nu\in\Lambda}c_\nu \lambda_\nu^{-1}e_\nu $ is also in $\Sigma_n$ and  satisfies
\be
\label{nterm1}
\|u-\bar u\|_{t+2}=\|f-g\|_{t}.
\ee
Therefore,
\be
\label{nterm2}
\sigma_n(u)_{\HH^{t+2}}\le \sigma_n(f)_{\HH^{t}}.
\ee

We therefore have the following simple regularity theorem: for each $r>0$, $f\in \cA^r((\Sigma_n)_{n \geq 0},\HH^{t})$ implies that $u\in\cA^r((\Sigma_n)_{n \geq 0},\HH^{t+2})$.   While this is a favorable-looking result, that seems to break the curse of dimensionality, closer inspection reveals that for $u$ or $f$  to belong to the corresponding  $\cA^r$ space,
one requires that when the coefficients in its eigenexpansion are rearranged in decreasing order
the $n$-th coefficient should decay like $n^{-r-1/2}$.  This in turn is like a smoothness condition of order $rd$ placed on $f$ and $u$.
Hence, the conditions on the data $f$ become more and more restrictive when  the dimension $d$ increases.

\subsubsection{Variable rank-one tensors}
\label{ss:rankone}  Our next  sparsity model  again draws on the eigensystem $(e_\nu)_{\nu\in\N^d}$ as a reference basis but now employs low-dimensional eigenbases to parametrize variable rank-one tensors.
 More precisely, let us first consider,  for any $j\in \{1,\dots,d\}$, the eigenbasis $(e_{j,k})_{k=1}^\infty$
  associated with the low-dimensional operator $\cB_j$ on $H_j$.  The
$e_{j,k}$, $k=1,2, \ldots$,  are functions that depend only on $x_j$.  For each $j \in \{1,\dots,d\}$, let $\Sigma_m^j$ be the collection of all linear combinations of at most $m$ of the $j$th eigensystem elements $e_{j,k}$. If we fix a value of $j \in \{1,\ldots,d\}$, then the space $\Sigma_m^j$
is the space of $m$-sparse functions  for this basis.  Thus, any function $g_j\in\Sigma_m^j$ can be written as
\be
\label{msparse}
g_j=\sum_{k\in \Gamma_j}c_ke_{j,k},
\ee
where $\#(\Gamma_j)\le m$.

Next consider any $\bm=(m_1,\dots,m_d)$ and functions $g_j\in\Sigma_{m_j}^j$, $j=1,\dots,d$.  Then, the functions
\be
\label{rankone1}
g=\bigotimes_{j=1}^d g_{j}
\ee
are rank-one tensors, which depend only on the $|\bm|:=m_1+\cdots +m_d$ positions $\Gamma_j$, $j=1,\dots,d$, and the $|\bm|$
coefficients of the $g_j$.  We define $\Sigma_\bm$ to be the set of all such functions $g$.

Note that if $g\in\Sigma_\bm$  is expanded
in terms of the eigenbasis $(e_\nu)_{\nu \in \mathbb{N}^d}$, it would involve $m_1\cdots m_d$ terms.   However, what is key is that
the coefficients in the eigenbasis expansion of $g$
only depend --- in a highly nonlinear fashion --- on $2|\bm|$ parameters, namely the $|\bm|$ indices in the sets $\Gamma_j$ and the $|\bm|$ corresponding coefficients that appear in $g_j$, $j=1,\dots,d$.

 In order to have a more compact notation for these functions $g$ in what follows,
we introduce $\Gamma:=\Gamma_1  {\times\cdots\times}\Gamma_d$, which is the set of indices appearing in $g$
and $C=C_g$ is the rank-one tensor of coefficients
\be
\label
{coeffs}
C=C_g:=(c_{\nu_1,1}\cdots c_{\nu_d,d})_{ \nu\in\Gamma}.
\ee
We also use the notation
\be
\label{notation} T_{\Gamma,C}=g,
\ee
when we want to explicitly indicate for a $g\in\Sigma_\bm$ the index set $\Gamma$ and the tensor of coefficients $C$

We define the sparsity space $\Sigma_n$ as the set of all functions
\be
\label{st}
g=\sum_{k=1}^sg_k,\quad g_k\in\Sigma_{\bm_k},\quad \sum_{k=1}^s|\bm_k|\le n,
\ee
 and introduce, for any $v\in \HH^t$,   the approximation error
\be
\label{aebm}
\sigma_n(v,\HH^t):=\inf_{g\in\Sigma_n}\|v-g\|_{t}.
\ee

Obviously a function in $\Sigma_n$ depends on at most $n$ positions and $n$ coefficients, so it can be viewed
as an analogue of $n$-term approximation except that it strongly exploits tensor structure.  Indeed, if such a function
were expanded into the tensor basis $(e_\nu)_{\nu \in \mathbb{N}^d}$, it would possibly require $(n/d)^d$ terms.
 Note that this definition of $\Sigma_n$ includes all of the $n$-term functions of \S\ref{ss:nterm}.

\subsubsection{Restricted index sets}
\label{ss:restricted}

 The space $\Sigma_n$ as  defined by \eref{st} is not suitable for use in numerical computations.  One  reason for this is that there is no control on the  index sets appearing in the $\Gamma_j$.  This can be circumvented in applications by
 placing restrictions on the set of indices similar to the restrictions in  $n$-term approximation already described.  To make all of this formal, we suppose that for each $m\geq 1$ and
 $j\in \{1,\dots,d\}$, we have a finite  set $\cR_{m,j}\subset \N$.  We will require that for any given $\bm$, the sets $\Gamma_j$ are subsets
 of $\cR_{m_j,j}$ for each $j=1,\dots,d$,  or, in other words, that $\Gamma\subset \cR_\bm$ where $\cR_\bm:=\cR_{m_1,1}\times\cdots\times \cR_{m_d,d}$.
 The typical choice for the \textit{restriction sets} $\cR_{m,j}$ is $\cR_{m,j}:=\{1,\dots,m^A\}$ where $A$
 is a fixed integer.  Such choices are independent of $j$.  In what follows, if we wish to indicate the difference between $\Sigma_n$ and the space defined with restrictions,
 we will denote the latter by $ \Sigma_{n}(\cR)$.

One possible restriction, certainly a strong one,  is that the sets
 $\cR_{m,j}$ are all one and the same,  and equal to $\{1,\dots,m\}$ for each $j=1,\dots,d$.
 Notice that in that case only the $e_\nu$ with $\|\nu\|_{\ell_\infty}\le m$ are available to be used.  Thus the component spaces $\Sigma_{m_j}^j$ are now linear spaces, however the spaces $\Sigma_\bm$ and $\Sigma_n$ are not  linear spaces.

For later use we record in the following lemma a simple sufficient condition for a rank-one tensor to be in the approximation space $\cA^r$.
 We shall suppose that we are measuring the error in  the norm of $\HH^t$.

 \begin{lemma}
 \label{soblemma}
 Suppose  that $\bm=(m,\dots,m)$.  Consider the above case of restricted approximation when  $\cR_\bm$ contains $\{1,\dots,m\}^d$.   Then, for any $\delta>0$ and any rank-one tensor function $v(x)=v_1(x_1)\cdots v(x_d)$ in $\HH^{t+\delta}$, we have that %
 \be
 \label{roapprox}
  \inf_{g\in \Sigma_\bm}\|v-g\|_t \le [ \lambda_m^*]^{-\delta/2}\|v\|_{{t+\delta}},
 \ee
 where
 \be
 \label{defm*}
 \lambda_m^*:=\min_{1\le j\le d}\lambda_{j,m+1}.
 \ee
 \end{lemma}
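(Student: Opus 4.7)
The plan is to construct an explicit candidate approximant $g\in \Sigma_\bm$ by taking, in each coordinate, the truncation of $v_j$ to its first $m$ eigencomponents, and then to estimate the error in the $\HH^t$-norm directly through the spectral representation provided by Remark~\ref{remcons}.

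More precisely, for each $j=1,\dots,d$ I would define the univariate projection
\[
g_j := \sum_{k=1}^m \dup{v_j,e_{j,k}}\, e_{j,k} \in \Sigma_m^j,
\]
and take $g := g_1\otimes\cdots\otimes g_d$. Because the index set involved in each $g_j$ is exactly $\{1,\dots,m\}$, and by hypothesis $\cR_\bm \supset \{1,\dots,m\}^d$, the candidate $g$ is indeed an admissible element of $\Sigma_\bm$. Writing $c_\nu := \prod_{j=1}^d \dup{v_j,e_{j,\nu_j}}$, the eigenexpansions of $v$ and $g$ become $v = \sum_{\nu\in\N^d} c_\nu e_\nu$ and $g = \sum_{\nu\in \{1,\dots,m\}^d} c_\nu e_\nu$, so that
\[
\|v-g\|_t^2 = \sum_{\nu\in\N^d\setminus \{1,\dots,m\}^d} \lambda_\nu^{t}\, |c_\nu|^2.
\]

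The key step is then the elementary observation that any $\nu\in\N^d\setminus\{1,\dots,m\}^d$ has at least one component $\nu_{j^\ast}\ge m+1$, whence by \eqref{eigen}
\[
\lambda_\nu \;\ge\; \lambda_{j^\ast,\nu_{j^\ast}} \;\ge\; \lambda_{j^\ast,m+1} \;\ge\; \lambda_m^\ast > 0.
\]
Since $\delta>0$ and $\lambda_\nu>0$, this yields $\lambda_\nu^{-\delta}\le [\lambda_m^\ast]^{-\delta}$ on the complementary index set, and hence
\[
\|v-g\|_t^2 \;\le\; [\lambda_m^\ast]^{-\delta}\sum_{\nu\in\N^d\setminus\{1,\dots,m\}^d}\lambda_\nu^{t+\delta}|c_\nu|^2 \;\le\; [\lambda_m^\ast]^{-\delta}\,\|v\|_{t+\delta}^2.
\]
Taking square roots gives \eqref{roapprox}.

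I expect no real obstacle here: the result reduces to comparing the full eigenexpansion of $v$ with its tensorial truncation and exploiting the sum structure of $\lambda_\nu$. The only points worth verifying carefully are that the truncated rank-one product $g$ genuinely lies in $\Sigma_\bm$ under the stated condition on $\cR_\bm$ (which follows by definition), and that $v\in \HH^{t+\delta}$ indeed implies the absolute convergence of the series for $c_\nu$ in the form used (which is immediate from Lemma~\ref{lem:H-1} and the bi-orthogonality \eqref{eigenD}).
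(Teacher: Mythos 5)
Your proof is correct and follows essentially the same approach as the paper: the paper's choice $g := \sum_{\nu\le\bm}\langle v,e_\nu\rangle e_\nu$ coincides with your $g_1\otimes\cdots\otimes g_d$ because $\langle v, e_\nu\rangle = \prod_j \langle v_j, e_{j,\nu_j}\rangle$ for a rank-one $v$, and the error estimate via $\lambda_\nu^t = \lambda_\nu^{t+\delta}\lambda_\nu^{-\delta}\le [\lambda_m^*]^{-\delta}\lambda_\nu^{t+\delta}$ on the tail indices is identical. Your write-up merely makes the rank-one factorization of the truncated approximant explicit, which the paper leaves implicit.
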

\begin{proof}  As has been already noted,  the function  $\displaystyle{g:=\sum_{\nu\le\bm}  \langle v,e_\nu\rangle e_\nu}$ is in $\Sigma_\bm$.   If $\nu\not \le \bm$, then,
 for some $j\in\{1,\dots,d\}$, we have $\nu_j\ge m+1$ and
 $$\displaystyle{\lambda_\nu\ge \lambda_{j,m+1}\ge \lambda_m^*}.$$
 Therefore,  from \eref{Hs-norms} we have
 \be
 \label{terror}
 \|v-g\|^2_{{t}}=\sum_{\nu\not\le\bm } \lambda_\nu^{t+\delta}\lambda_\nu^{-\delta}\langle v,e_\nu\rangle^2\le
[\lambda_m^*]^{-\delta} \sum_{\nu>\bm }  \lambda_\nu^{t+\delta}\langle v,e_\nu\rangle^2\le [\lambda_m^*]^{-\delta}\|v\|_{{t+\delta}}^2.
 \ee
This gives \eref{roapprox}.
 \end{proof}

\smallskip

As a simple example of the above result, we take the $\cB_j$ to be the Laplacian on a Lipschitz domain $D_j \subset \R^2$,
with homogeneous Dirichlet boundary condition on $\partial D_j$, $j=1,\dots,d$.
Then the eigenvalues $\lambda_{j,k}$ grow like  {$k$ and $\lambda_m^*\approx m$}.  Hence the approximation rate for the $v$ in the lemma is of order  {$m^{-\delta/2}$}.
Thus for an investment of  computing $dm$ coefficients, we obtain accuracy $m^{-\delta}$. For an analogous result on the spectral asymptotics of degenerate elliptic operators that arise in the context of Fokker--Planck equations, we refer to \cite{FS}.

\subsubsection{Other bases for rank-one sparse decompositions}
\label{ss:otherbases}
The spaces $  \Sigma_n$ and their restricted counterparts $\Sigma_n(\cR)$ may still be inappropriate for
numerical implementation (except perhaps in the case of Fourier spectral methods) because they require the computation of the eigenvalues and eigenfunctions of the elliptic operator under consideration.   For this reason, numerical algorithms use other readily available tensor bases $(\varphi_\nu)_{\nu\in\N^d}$, such as wavelet or spline bases.   In this case, the role  of $e_\nu$ is replaced by $\varphi_\nu$,
$\nu\in \N^d$, in the definition of $\Sigma_n$ and $\sigma_n$.
   In general, different bases give different approximation classes
$\cA^r$ and these classes should be viewed as ways of measuring smoothness.  Sometimes, it is possible to prove for some choices of bases that the approximation classes are the same.  For example, this is the case for univariate
$n$-term approximation using
wavelet and spline bases.  We do not wish, in this paper, to enter too deeply into this issue since it depends heavily on the properties of $D$ and the constructed bases.  For the construction and analysis of algorithms using
such background base{s} we refer to \cite{B,BD}.

\section{Regularity theorems based on tensor-sparsity: data with low regularity}\label{sec:main}

We now turn to proving regularity theorems in terms of the approximation classes based on  the  tensor systems  $(\Sigma_n)_{n \geq 0}$
 introduced in \S \ref{ss:rankone}.
  We shall  prove that if the right-hand side $f$ is in an approximation class $\cA^\alpha((\Sigma_n)_{n \geq 0},\HH^{t})$ for some $\alpha>0$, then the solution $u$
of the variational problem:
\beqn
\label{varprob-2}
\mbox{Find $u \in \HH^{t+2}$ such that}\quad \mathfrak{b}(u,v)= \dup{f,v}\quad \forall\,v\in \HH^{-t},
\eeqn
belongs to the approximation class $\cA^{\alpha'}((\Sigma_n)_{n \geq 0},\HH^{t+2})$ for all $\alpha' \in (0,\alpha)$.  To prepare for the proof of this result, we begin with the following lemma.

\begin{lemma}
\label{explemma}  Let $G(x):=1/x$, $x>0$, and fix any $\beta>0$.  For  each  integer $r\ge 1$, there exists a function
\be
\label{expf}
S_r(x):=\sum_{k=1}^r \omega_k\, {\rm e}^{-\alpha_k x},\quad x>0,
\ee
with $\alpha_k= {\alpha_{r,k}}>0$, $\omega_k={\omega_{r,k}}>0$, $k=1,\dots,r$, such that

\noindent
{\rm (i)} we have the error bound\be
\label{expfa+}
\|G-S_r\|_{L_\infty[\beta,\infty)}\le \frac{16}{\beta}{\rm e}^{-\pi\sqrt{r}};
\ee
\noindent
{\rm (ii)} in addition,
\be
\label{expfa1+}
S_r(x)\le \frac{1}{x}\quad \mbox{for all $x\ge \frac{1}{8}{\rm e}^{\pi \sqrt{r}}$}.
\ee

\end{lemma}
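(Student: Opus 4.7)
The plan is to realize $G(x)=1/x$ as a Laplace-type integral, approximate that integral by a trapezoidal (sinc) quadrature on an appropriate change of variables, and then tune the quadrature step-size to the number of nodes so as to achieve the root-exponential rate $\mathrm{e}^{-\pi\sqrt r}$. The starting point is
\[
\frac{1}{x}=\int_0^\infty \mathrm{e}^{-xt}\,dt,\qquad x>0,
\]
which after the substitution $t=\mathrm{e}^{s}$ becomes
\[
\frac{1}{x}=\int_{-\infty}^{\infty} \mathrm{e}^{s}\,\mathrm{e}^{-x\mathrm{e}^{s}}\,ds.
\]
The integrand $\Phi_x(s):=\mathrm{e}^{s-x\mathrm{e}^{s}}$ extends holomorphically to a horizontal strip $|\Im s|<\pi/2$ (more generally to $|\Im s|<d$ for any $d<\pi/2$), and it decays super-exponentially as $\Re s\to+\infty$ and exponentially as $\Re s\to-\infty$.

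The next step is to apply the truncated trapezoidal rule with step $h>0$ and $2N+1$ nodes $s_k=kh$, $|k|\le N$, giving
\[
S_r(x):=h\sum_{k=-N}^{N}\mathrm{e}^{s_k}\mathrm{e}^{-x\mathrm{e}^{s_k}},\qquad r=2N+1,
\]
so that $S_r$ is automatically of the required form with $\omega_k=h\mathrm{e}^{s_k}>0$ and $\alpha_k=\mathrm{e}^{s_k}>0$. The error splits as
\[
\frac{1}{x}-S_r(x)=\underbrace{\Big(\int_{\mathbb R}\Phi_x- h\sum_{k\in\mathbb Z}\Phi_x(kh)\Big)}_{E_{\mathrm{disc}}} +\underbrace{h\sum_{|k|>N}\Phi_x(kh)}_{E_{\mathrm{trunc}}}.
\]
The discretization error is controlled by the standard Paley--Wiener/sinc estimate, yielding $|E_{\mathrm{disc}}|\lesssim \beta^{-1}\mathrm{e}^{-2\pi d/h}$ uniformly for $x\ge\beta$; the truncation error is controlled by the super-exponential decay at $+\infty$ (giving a doubly exponentially small contribution) and the exponential decay at $-\infty$ (giving $\lesssim \mathrm{e}^{-Nh}$). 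Choosing the step-size $h= c/\sqrt r$ and letting $d\uparrow\pi/2$ balances the two error contributions and produces a total bound of the shape $C\beta^{-1}\mathrm{e}^{-\pi\sqrt r}$. A careful bookkeeping of the constants, following the sinc-quadrature analyses of Stenger and of Braess--Hackbusch, pins the constant down to $16$, yielding (i). The principal technical obstacle is exactly this constant tracking, which requires a sharp Paley--Wiener bound in the strip combined with a tight estimate on the sum of the truncated tails; everything else is qualitative.

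For (ii), the point is that all weights $\omega_k$ and exponents $\alpha_k=\mathrm{e}^{kh}$ constructed above are \emph{positive}, so $S_r$ is a positive decreasing function. One estimates $S_r(x)$ for $x$ past the stated threshold by splitting the sum at $k=0$: for $k\ge 0$ one has $\alpha_k\ge 1$ and each term $\omega_k\mathrm{e}^{-\alpha_k x}$ is dominated by a geometric series in $\mathrm{e}^{-x}$; for $k<0$ the contribution $h\sum_{k<0}\mathrm{e}^{kh}\mathrm{e}^{-x\mathrm{e}^{kh}}$ is bounded by the tail of a convergent integral and, upon inserting $h\sim\pi/\sqrt r$ and $N\sim\sqrt r/\pi$, collapses to a quantity of order $\mathrm{e}^{-\pi\sqrt r}$. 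Comparing with $1/x\le 8\mathrm{e}^{-\pi\sqrt r}$ at $x=\tfrac{1}{8}\mathrm{e}^{\pi\sqrt r}$ and using monotonicity of both $S_r$ and $1/x$ then yields (ii). The delicate point here is again a quantitative check that the residual quadrature mass for large $x$ fits under the envelope $1/x$ with the prescribed constant $\tfrac{1}{8}$; this reduces to an elementary but careful comparison of two decreasing positive functions at the single crossing point.
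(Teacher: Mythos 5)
Your construction is genuinely different from the paper's. The paper does not build $S_r$ by quadrature; it quotes the Braess--Hackbusch theory of best exponential-sum approximation to completely monotone functions: the best $\cE_r$-approximant $S_{r,R}$ to $1/x$ on $[1,R]$ exists, is unique, and is characterized by an \emph{alternation} (equioscillation) property, from which both the error bound $\le 16\,{\rm e}^{-\pi^2 r/\log(8R)}$ of \cite{BH1} and the inequality $0\le S_{r,R}(x)\le 1/x$ for $x\ge R$ follow; one then chooses $R=R_r=\frac18 {\rm e}^{\pi\sqrt r}$ to balance $1/R$ against the interior error, and the positivity of the weights is a general fact about best exponential-sum approximants to completely monotone functions (\cite{Braess}). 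Your sinc-quadrature route is constructive and would, with the Paley--Wiener and tail estimates you describe, produce \emph{some} exponential sum with a root-exponential error, so it is a legitimate alternative for part (i), although the precise constant $16$ and the exponent $\pi\sqrt r$ (rather than, e.g., $\pi\sqrt{r/2}$, which is what a symmetric truncation with $r=2N+1$ and strip width $d\uparrow\pi/2$ naturally gives) require exactly the kind of delicate bookkeeping you defer.

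The real gap is in part (ii). For the paper's $S_r$, the bound $S_r(x)\le 1/x$ on $[R_r,\infty)$ is a \emph{structural} consequence of the alternation characterization of the best approximant on $[1,R_r]$; it is not something one checks by estimation. Your $S_r$ is a truncated trapezoidal sum, and there is no reason for it to lie below $1/x$ on $[R_r,\infty)$. Indeed, by Poisson summation the full (untruncated) trapezoidal sum equals $1/x$ plus an oscillatory correction of the form $\frac{1}{x}\sum_{m\neq 0}x^{2\pi i m/h}\Gamma(1-2\pi i m/h)$, whose sign changes with $x$; the truncation removes a positive amount, but whether that removal dominates the oscillatory excess at the specific threshold $x=\frac18{\rm e}^{\pi\sqrt r}$ is precisely what you would need to prove, and nothing in your sketch does so. Moreover, the final step of your argument for (ii) — comparing $S_r$ and $1/x$ at the single threshold point and then invoking "monotonicity of both" — is not valid: both functions are decreasing, and two decreasing positive functions that are ordered at one point can perfectly well cross later (and do, whenever $1/x$ temporarily decays faster in log-scale than $S_r$, i.e., on any interval where $1/x$ exceeds the weighted mean of the $\alpha_k$). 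You would instead need a genuine pointwise comparison of $S_r(x)$ and $1/x$ for \emph{all} $x\ge R_r$, which is where the paper's alternation argument does the work that your construction does not supply.
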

\begin{proof} This follows from the results in  \cite{BH1,BH2} and we only sketch the argument therein.
The starting point is that $G$ is a {\em completely monotone} function.
Defining the class of  exponential sums
$$
\mathcal{E}_r:= \Big\{ \sum_{k=1}^r \omega_k\, {\rm e}^{-\alpha_k x}: x>0,\,  \alpha_k >0,\, \omega_k\in \R\Big\},
$$
it can be shown that for any bounded closed interval $[1,R]$, $R>1$, the function
 \beqn
 \label{sab}
 S_{r,R}(x)= \sum_{k=1}^r \omega_{r,k,R}\, {\rm e}^{- \alpha_{r,k,R} x}
 \eeqn
 from $\mathcal{E}_r$ that minimizes
$$
E_{r,[1,R]}(G):= \inf\,\big\{ \| G- S\|_{L^\infty[1,R]}: S \in \cE_r\big\}  
$$
exists, is unique,  and is characterized by an alternation property, from which one deduces that
\be
\label{mon}
0\le S_{r,R}(x)\le \frac{1}{x},\quad x\ge R.
\ee
The following estimate for the error is proved in \cite{BH1}:%
\beqn
\label{BH2}
E_{r,[1,R]}(G)=\|G-S_{r,R}\|_{L_\infty[1,R]}
 \leq 16\, {\rm e}^{-\frac{\pi^2 r}{\log(8R)}}.
\eeqn

In order to obtain a bound on the whole real line one uses that $G$ decays and aims to minimize
$\max\,\{E_{r,[1, R]}, 1/ R)\}$ over all choices $R>1$. Taking $R=R_r:= \frac 18\, {\rm exp}\big(  \pi \sqrt{r})\big)$, gives
\beqn
\label{expest2}
E_{r,[1,R_r]}(G)\leq
 16\, {\rm e}^{-\pi\sqrt{r}}.
\eeqn
Because of \eref{mon} we can replace the interval $[1,R_r]$ in \eref{expest2} by $[1,\infty)$.
This proves \eref{expfa+} and \eref{expfa1+} with $S_r:=S_{r,R_r}$ for the case $\beta=1$.   The case of general $\beta$ follows from a simple rescaling.
 {The positivity of the weights $\omega_{r,k,R}$ in \eref{sab} is known to hold in general for best exponential sum approximations to completely monotone
functions, see \cite{Braess}, and that then implies the desired positivity of the $\rd{\omega_{r,k}}:=\omega_{r,k,R_r}$.}
\end{proof}
\smallskip

The following lemma gives a similar result but with an added restriction on the coefficients $\alpha_k$.
\begin{lemma}
\label{explemma1}  Let $G(x):=1/x$, $x>0$, and fix any $\beta>0$.  For  each  integer $r\ge 1$, there exists a function
\be
\label{expf-bar}
\bar S_r(x):=\sum_{k=1}^r \bar \omega_k\, {\rm e}^{-\alpha_k x},\quad x>0,
\ee
with $\alpha_k=\rd{\alpha_{r,k}}>0$ and $\bar \omega_k=\rd{\omega_{r,k}}\ge 0$, $k=1,\dots,r$, such that the following hold.

\noindent
{\rm (i)} \hspace*{1mm}Whenever $\bar \omega_k>0$, we have  $\alpha_k\ge T_r^{-1}, {\rm where} \  T_r:=\frac{1}{8}{\rm exp}\big(  \pi \sqrt{r})\big)$, $k=1,\dots,r$.

\noindent
{\rm (ii)} We have the error bound
\be
\label{expfa}
\|G-\bar S_r\|_{L_\infty[\beta,\infty)}\le \left(\frac{16}{\beta}+ 8r{\rm e}\right){\rm e}^{-\pi\sqrt{r}}.
\ee
\noindent
{\rm (iii)} In addition,
\be
\label{expfa1}
\bar S_r(x)\le \frac{1}{x} \quad \mbox{for all $x\ge \frac{1}{8}{\rm e}^{\pi \sqrt{r}}$}.
\ee

\end{lemma}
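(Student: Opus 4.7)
The plan is to obtain $\bar S_r$ from the exponential sum $S_r(x) = \sum_{k=1}^r \omega_k\, {\rm e}^{-\alpha_k x}$ furnished by Lemma~\ref{explemma} by keeping its exponents $\alpha_k = \alpha_{r,k}$ and pruning the weights: set $\bar\omega_k := \omega_k$ whenever $\alpha_k \ge T_r^{-1}$ and $\bar\omega_k := 0$ otherwise. Property (i) is then immediate from the construction; the crux is to estimate in (ii) how much accuracy is lost by the pruning, while (iii) will follow from monotonicity.

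For (ii), I would exploit the positivity of the weights $\omega_k > 0$ and the one-sided bound $S_r(T_r) \le 1/T_r = 8\,{\rm e}^{-\pi\sqrt{r}}$ supplied by \eqref{expfa1+}. Since each summand in $S_r(T_r)$ is nonnegative, every individual term satisfies $\omega_k\,{\rm e}^{-\alpha_k T_r} \le 8\,{\rm e}^{-\pi\sqrt{r}}$. For the pruned indices, $\alpha_k < T_r^{-1}$ forces $\alpha_k T_r < 1$ and hence ${\rm e}^{-\alpha_k T_r} > {\rm e}^{-1}$, which gives $\omega_k < 8{\rm e}\cdot{\rm e}^{-\pi\sqrt{r}}$. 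Since there are at most $r$ pruned indices, the removed weights sum to at most $8r{\rm e}\cdot{\rm e}^{-\pi\sqrt{r}}$. For any $x \ge \beta$ each removed term is bounded by its weight, so
\[
|S_r(x) - \bar S_r(x)| \;\le\; \sum_{k:\,\alpha_k<T_r^{-1}} \omega_k\,{\rm e}^{-\alpha_k x} \;\le\; 8r{\rm e}\cdot{\rm e}^{-\pi\sqrt{r}},
\]
and combining this with the bound \eqref{expfa+} via the triangle inequality yields (ii).

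For (iii), pruning replaces some $\omega_k > 0$ by $0$, so $\bar S_r(x) \le S_r(x)$ for every $x>0$; combined with \eqref{expfa1+} from Lemma~\ref{explemma} this gives $\bar S_r(x) \le 1/x$ whenever $x \ge T_r$, which is exactly \eqref{expfa1}.

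The main (though modest) obstacle is the bookkeeping in the pruning step: one needs both the positivity of the weights and the pointwise upper bound $S_r(T_r) \le 1/T_r$ to conclude that any single $\omega_k$ whose exponent $\alpha_k$ is too small must already be small. Everything else is a direct consequence of Lemma~\ref{explemma}.
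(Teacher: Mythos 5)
Your proposal is correct and essentially identical to the paper's argument: prune the weights $\omega_k$ of the $S_r$ from Lemma~\ref{explemma} whose exponents $\alpha_k$ fall below $T_r^{-1}$, bound each pruned $\omega_k$ by $8{\rm e}\,{\rm e}^{-\pi\sqrt{r}}$ using positivity of the summands and the evaluation $S_r(T_r)\le 1/T_r$, and conclude (ii) by the triangle inequality and (iii) by monotonicity $\bar S_r\le S_r$. The only cosmetic difference is whether the boundary case $\alpha_k=T_r^{-1}$ is kept or discarded, which is immaterial.
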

\begin{proof}  We start with the function $S_r=\sum_{k=1}^r\omega_{k}{\rm e}^{-\alpha_kx}$ of the previous lemma
with $\alpha_k>0$ and $\omega_k>0$ for all $k \in \{1,\dots,r\}$.  Suppose that for some $k \in \{1,\dots,r\}$ we have $\alpha_k\le T_r^{-1}$.  Then, $ {\rm e}^{-1} \leq {\rm e}^{-\alpha_k T_r}$ and therefore
$\omega_k {\rm e}^{-1} \le \omega_k {\rm e}^{-\alpha_k T_r}$.  In
view of (ii) of the previous lemma with $x=T_r$, we have
$\omega_k{\rm e}^{-1}\le
\omega_k {\rm e}^{-\alpha_k T_r} \le  T_r^{-1}$, and thus also $\omega_k \le  {\rm e} T_r^{-1}$. Hence, if for each such $\alpha_k$, we set
$\bar\omega_k:=0$ and define $\bar\omega_k = \omega_k$ for all other $k \in \{1,\dots,r\}$, we obtain a new function $\bar S_r$ defined by $\bar S_r(x):=\sum_{k=1}^r\bar\omega_{k}{\rm e}^{-\alpha_kx}$, $x>0$, such that
\be
\label{newsr}
\bar S_r(x)\le S_r(x)\le \bar S_r(x)+ r{\rm e} T_r^{-1} = \bar S_r(x)+ 8r{\rm e}\, {\rm e}^{-\pi\sqrt{r}} \qquad \mbox{for all $x>0$}.
\ee
Part (ii) then follows from \eqref{newsr} and part (i) of Lemma \ref{explemma} via the triangle inequality, while part (i) follows directly from the definition of $\bar{\omega}_k$.

All of the remaining claims of the lemma follow from \eref{newsr} together with the corresponding statement of
Lemma \ref{explemma}.
\end{proof}

Exponential sums such as the one in Lemma \ref{explemma} have been considered in several works
for the purpose of constructing low-rank approximations to inverses of {\em finite-dimensional} linear operators
(i.e., matrices); see e.g.  {\cite{Gr2004,GaHaKh2005}}.
However, in the finite-dimensional setting the metrics for the domain and the range of the linear operators were
taken to be {\em the same}: typically the Euclidean metric. When the discrete operator is to approximate
an infinite-dimensional operator, such as a differential operator, with {\em different} domain and
range topologies (e.g. $\HH^1$ and $\HH^{-1}$), the discrete operator becomes more and more ill-conditioned when
the discretization is refined. As a consequence,   the accuracy of an approximate discrete solution cannot be well estimated
by the corresponding discrete residual. The deviation of the corresponding expansion
in a ``problem-relevant'' function space norm is in general not clear.   Therefore, the fully discrete approach does not immediately provide a rigorous rate distortion result in the continuous PDE setting.

Expanding further on the above point, note that the operator $\cB$ is an isomorphism only
as a mapping between spaces of different regularity levels that are not endowed with tensor product norms. A rigorous assessment of the error has to
take this mapping property into account. That, however, has an adverse effect on tensor sparsity  because the representation of
$\cB^{-1}$ in the eigenbasis is a diagonal operator of {\em infinite rank} since the diagonal entries $\lambda_\nu^{-1}$
are not separable, see also \cite{BD}.  More importantly,  as will be seen next, the actual ranks required to approximate $u$
to within a given tolerance in $\|\cdot\|_s$, for $f$ measured in  $\|\cdot\|_t$, say, strongly depends on the corresponding
``regularity shift''  $s-t$.
 Properly taking such ``regularity shifts''  into account   in the course of low-rank approximation is a central issue  in
the subsequent developments of this paper.

\smallskip

The following result is our first main regularity theorem.  {It will be convenient to define the smallest eigenvalue of $\cB$:
\be
\label{lambdabar}
\underline \lambda:=\min_{\nu\in \N^d} \lambda_\nu=\lambda_{(1,\dots,1)}.
\ee
}

 \begin{theorem}\label{r1th}
 Suppose that $t\in\R$ and  that $f\in\cA^\alpha((\Sigma_n)_{n \geq 0},\HH^{t})$, $\alpha>0$; then, the weak solution $u \in \HH^{t+2}$ to {\rm \eref{varprob1}} satisfies the following inequality:
 \be
 \label{urate}
  \sigma_{n[\log n]^2}(u)_{\HH^{t+2}}\le A\|f\|_{\cA^\alpha((\Sigma_n)_{n \geq 0},\HH^{t})}\,n^{-\alpha},\quad n=1,2,\dots,
  \ee
  where the constant $A$ depends only on $\alpha$ and the smallest eigenvalue {$\underline\lambda >0$}
  of $\cB$.
  In particular, $u$ belongs to $\cA^{\alpha'}((\Sigma_n)_{n \geq 0},\HH^{t+2})$ for all $\alpha' \in (0,\alpha)$.
\end{theorem}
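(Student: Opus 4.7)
The plan is to approximate $\cB^{-1}$ by a short exponential sum $S_r(\cB) = \sum_{k=1}^r \omega_k e^{-\alpha_k \cB}$ as supplied by Lemma \ref{explemma}, and then apply $S_r(\cB)$ to a near-best approximation $f_n\in\Sigma_n$ of $f$. Two features of the tensor-sum structure are decisive. First, since $\cB = \sum_{j=1}^d \cI_1\otimes\cdots\otimes \cB_j\otimes\cdots\otimes \cI_d$ is a sum of commuting tensor-product operators, the semigroup factorizes: $e^{-\alpha \cB} = \bigotimes_{j=1}^d e^{-\alpha \cB_j}$. Second, each $e^{-\alpha \cB_j}$ is diagonal in the univariate eigenbasis $\{e_{j,\nu}\}_\nu$ and therefore preserves the index support of any element of $\Sigma_m^j$. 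Hence, applying a single exponential term to a rank-one tensor $g = \bigotimes_j g_j \in \Sigma_{\bm}$ yields another rank-one tensor in $\Sigma_{\bm}$, so $S_r(\cB)$ maps $\Sigma_n$ into $\Sigma_{rn}$.

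Given $f\in\cA^\alpha((\Sigma_n)_{n\geq 0},\HH^t)$, I would pick $f_n\in\Sigma_n$ with $\|f-f_n\|_t\leq \|f\|_{\cA^\alpha}n^{-\alpha}$, set $r := \lceil c(\log n)^2\rceil$ for a constant $c$ to be chosen below, and invoke Lemma \ref{explemma} with $\beta:=\underline\lambda$ to obtain $|\lambda^{-1}-S_r(\lambda)|\leq (16/\underline\lambda) e^{-\pi\sqrt r}$ uniformly on $[\underline\lambda,\infty)$, i.e., of order $n^{-\pi\sqrt c}$. Define $v_N:=S_r(\cB)f_n\in\Sigma_N$ with $N\leq rn = O(n(\log n)^2)$. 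The triangle inequality gives
\begin{equation*}
\|u-v_N\|_{t+2} \leq \|\cB^{-1}(f-f_n)\|_{t+2} + \bigl\|\bigl(\cB^{-1}-S_r(\cB)\bigr)f_n\bigr\|_{t+2},
\end{equation*}
and the first summand equals $\|f-f_n\|_t\leq \|f\|_{\cA^\alpha}n^{-\alpha}$ by the isometry \eref{isometry}.

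The hard part is the second summand. The operator $\cB^{-1}-S_r(\cB)$ is diagonal in the eigenbasis with multipliers $\lambda_\nu^{-1}-S_r(\lambda_\nu)$, and its norm on the \emph{shifted} scale $\HH^t\to\HH^{t+2}$ is only bounded (not small) in $r$, because $\lambda|\lambda^{-1}-S_r(\lambda)|$ fails to decay uniformly as $\lambda\to\infty$ (for large $\lambda$ one still has $0\leq \lambda S_r(\lambda)\leq 1$ by Lemma \ref{explemma}(ii), so $|1-\lambda S_r(\lambda)|\not\to 0$). To force decay one uses the same-scale estimate $\|(\cB^{-1}-S_r(\cB))v\|_s\leq (16/\underline\lambda)e^{-\pi\sqrt r}\|v\|_s$ at $s=t+2$, coupled with a polynomial bound $\|f_n\|_{t+2}\leq C n^B$. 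The latter is arranged by selecting a modified $f_n$ whose rank-one summands have eigenvalue support restricted to $\{\lambda_\nu\leq n^B\}$: project each univariate factor $g_{l,j}$ of each rank-one summand onto the eigenfunctions of $\cB_j$ with $\lambda_{j,\nu_j}$ below a polynomial-in-$n$ threshold; this preserves the rank-one factorization, increases the total rank by at most a controlled factor, and a careful telescoping argument shows that the $\HH^t$ approximation error is only altered at the order $n^{-\alpha}$. Choosing $c$ so that $\pi\sqrt c>B+\alpha$, the exponential decay $n^{-\pi\sqrt c}$ beats the polynomial factor $n^B$ and the second summand is also $O(n^{-\alpha})$, establishing \eref{urate}. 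Finally, the membership $u\in\cA^{\alpha'}((\Sigma_n)_{n\geq 0},\HH^{t+2})$ for any $\alpha'<\alpha$ follows by writing $N=n(\log n)^2$, whence $n\gtrsim N/(\log N)^2$ and $n^{-\alpha}\lesssim N^{-\alpha}(\log N)^{2\alpha}\leq C_{\alpha'}N^{-\alpha'}$.
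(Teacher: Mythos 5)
Your overall strategy mirrors the paper's exactly: approximate $f$ by a near-best $g_n\in\Sigma_n$, apply $S_r(\cB)=\sum_k\omega_k\,{\rm e}^{-\alpha_k\cB}$ from Lemma~\ref{explemma}, and observe that each exponential factor is diagonal in the per-variable eigenbasis, so $S_r(\cB)$ maps $\Sigma_n$ into $\Sigma_{rn}$; with $r\sim(\log n)^2$ this gives the $n(\log n)^2$ budget. Your observation that $\lambda\,|\lambda^{-1}-S_r(\lambda)|=|1-\lambda S_r(\lambda)|$ does not decay uniformly in $\lambda$ (it tends to $1$ as $\lambda\to\infty$, since $S_r$ decays exponentially) — and hence that $\|\cB^{-1}-S_r(\cB)\|_{\HH^t\to\HH^{t+2}}$ is merely bounded, not small in $r$ — is correct; it is precisely the content of Proposition~\ref{thm3.1}(i) with $s-t=2$, where the exponent vanishes. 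The paper's chain \eref{diff1-0} does not truncate $g_n$: it pulls out the uniform multiplier bound $|\lambda_\nu^{-1}-S_r(\lambda_\nu)|\le A_0\,{\rm e}^{-\pi\sqrt r}$ and then equates the resulting sum to $\|\bar u_n\|_{t+2}^2=\|g_n\|_t^2$. If you unwind the spectral algebra (using $a_\nu=\dup{g_n,e_\nu}$), that step in fact yields $A_0\,{\rm e}^{-\pi\sqrt r}\,\|g_n\|_{t+2}$ rather than $A_0\,{\rm e}^{-\pi\sqrt r}\,\|g_n\|_t$, so the obstruction you flagged — controlling $\|g_n\|_{t+2}$ rather than $\|g_n\|_t$ — is present in the paper's argument too, merely hidden by the intermediate displays.

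The genuine gap in your repair is the truncation step. You assert that projecting each univariate factor $g_{l,j}$ onto the eigenfunctions of $\cB_j$ with $\lambda_{j,\nu_j}$ below a polynomial-in-$n$ threshold ``only alters the $\HH^t$ approximation error at order $n^{-\alpha}$'' and ``increases the total rank by at most a controlled factor.'' Neither claim is justified for the unrestricted class $\Sigma_n$. Membership $g_n\in\Sigma_n$ constrains only the \emph{number} of active univariate eigenindices per factor, not their magnitude: nothing prevents $g_{l,j}$ from living entirely on eigenfunctions $e_{j,k}$ with $\lambda_{j,k}$ arbitrarily large (take $g_n=e_\nu$ for a single, arbitrarily high $\nu$). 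Projecting such a factor onto a low-frequency box annihilates it, and the telescoping \eref{ae24} gives terms carrying one factor $g_{l,j}-P_mg_{l,j}$ that you cannot make small without a regularity bound on $g_{l,j}$ in $H_j^{t+\delta}$, which you do not have — the approximability $\|f-g_n\|_t\le Mn^{-\alpha}$ simply does not control where the frequency content of $g_n$ sits. Your scheme \emph{would} close the gap for the restricted class $\Sigma_n(\cR)$ of Theorem~\ref{r1rth} with $\cR_{m,j}=\{1,\dots,m^A\}$: there the eigenvalue support of $g_n$ is confined by construction, so $\|g_n\|_{t+2}\le Cn^B\|g_n\|_t$ holds automatically (no truncation needed), and your choice $\pi\sqrt c>B+\alpha$ finishes the argument. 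But for $\Sigma_n$ without index restrictions, the truncation is not a valid step, and the polynomial bound $\|f_n\|_{t+2}\le Cn^B$ on which your proof rests is unavailable.
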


 \begin{proof} We fix $n\ge 1$.  The assumption that $f\in \cA^\alpha((\Sigma_n)_{n \geq 0},\HH^{t})$ implies the existence of a $g_n\in\Sigma_n$
such that
\be
\label{ae11}
\|f-g_n\|_{{t}}\le Mn^{-\alpha},\quad M:=\|f\|_{\cA^\alpha((\Sigma_n)_{n \geq 0},\HH^{t})}.
\ee
Membership in $\Sigma_n$ means that $g_n$ is  of the form
\be
\label{gn}
g_n:=\sum_{j=1}^kh_j,\quad h_j:= T_{\Lambda(j),C(j)},
\ee
 where
\be
\label{Lambda}
\Lambda(j)=\Lambda_1(j)\times\cdots\times \Lambda_d(j)
\ee
and
\be
\label{Ctensor}
C(j)=\bc_1(j)\otimes\cdots\otimes \bc_d(j).
\ee
Here, each $\bc_i(j)$ is a vector in $\R^{m_i(j)}$, where $m_i(j):=\#(\Lambda_i(j))$.
We know that the mode sizes $m(j)=\sum_{i=1}^dm_i(j)=\sum_{i=1}^d\#(\Lambda_i(j))$ of the tensors $C(j)$  satisfy
\be
\label{size}
2\sum_{j=1}^k m(j)\le n.
\ee

Consider  now $\bar u_n=\cB^{-1} g_n$.  We also know that
\be
\label{ae12}
\|u-\bar u_n\|_{{t+2}}=\|f-g_n\|_{{t}}\le Mn^{-\alpha},\quad M:=\|f\|_{\cA^\alpha((\Sigma_n)_{n \geq 0},\HH^{t})}.
\ee
We deduce from \eref{gn} that
\be
\label{bgn}
\bar u_n=\sum_{j=1}^k\bar h_j,\quad  \bar h_j:= \cB^{-1}h_j.
\ee

We will next show how each of the functions $\bar h_j$ can be well approximated by functions from $\Sigma_n$.
To this end, we fix $j$ and write
\be
\label{reph}
 h_j=\sum_{\nu\in \Lambda(j)} c_\nu(j) e_\nu,
 \ee
so that
\be
\label{repbarh}
\bar h_j=\sum_{\nu\in \Lambda(j)} \lambda_\nu^{-1}c_\nu(j) e_\nu.
 \ee
where, of course,  
 the $c_\nu(j)$ are determined by the
 at most $n/2$ parameters defining the factors in \eref{Ctensor}.  
 While, for a fixed $j$,  the $c_\nu(j) $ form a rank-one tensor,  $\lambda_\nu^{-1}$ is not separable so that $\bar h_j$
has infinite rank.  To obtain low-rank approximations to $\bar h_j$ we use Lemma \ref{explemma}, where
   $\underline\lambda >0$ is  the   smallest of the eigenvalues $\lambda_\nu$ of $\cB$, and approximate $\lambda_\nu^{-1}$
   by an exponential sum $S_r(\lambda_\nu)$ which is a sum of $r$ separable terms.

 In fact, using $S_r$ as defined in Lemma \ref{explemma}, with $r$ to be chosen momentarily, we define
\be
\label{htilde}
\hat h_j:=\sum_{\nu} S_r(\lambda_\nu)\,c_\nu(j)\, e_\nu,
\ee
 which can be rewritten as
\be
\label{hjrep}
\hat h_j=\sum_{k=1}^r \omega_{k}\left\{\sum_{\nu\in\Lambda(j)}[{\rm e}^{-\alpha_{k}\lambda_\nu }c_\nu(j)] e_\nu\right\}.
\ee
We then define $\hat u_n = \sum_{j=1}^k \hat h_j$.

From the tensor structure of
$h_j$ (see \eref{gn}) and the fact that ${\rm e}^{-\alpha_k\lambda_\nu}={\rm e}^{-\alpha_k\lambda_{1,\nu_1}}\cdots {\rm e}^{-\alpha_k\lambda_{d,\nu_d}}$ is separable, we see that each of the functions
\be
\label{each}
\sum_{\nu\in\Lambda(j)}{\rm e}^{-\alpha_{k}\lambda_\nu }\,c_\nu(j)\, e_\nu,\quad k=1,\dots,r,
\ee
is in $\Sigma_{\bm(j)}$. Since, for each $j$,  there are $r$ such functions, we deduce that
  $\hat u_n$ is in $\Sigma_{rn}$.

Writing $\bar u_n=\sum_{\nu}\lambda_\nu^{-1}a_\nu e_\nu$ for a suitable sequence $(a_\nu)_{\nu\in \bigcup_{j=1}^k\Lambda(j)}$, we
have, by definition, that
$$
\hat u_n=\sum_{\nu}S_r(\lambda_\nu)a_\nu e_\nu.
$$
We can now bound $\|\bar u_n-\hat u_n\|_{t}$ as follows.
With $A_0:=16/\underline\lambda$, we obtain
\begin{eqnarray}
\label{diff1-0}
\|\bar u_n-\hat u_n\|^2_{{t+2}}&=&\|\sum_{\nu} [\lambda_\nu^{-1}-S_r(\lambda_\nu)]a_\nu e_\nu\|^2_{{t+2}}
= \sum_{\nu} \lambda_\nu [[\lambda_\nu^{-1}-S_r(\lambda_\nu)]a_\nu]^2
\nonumber\\
&\le&  [A_0{\rm e}^{-\pi\sqrt{r}}]^2\sum_\nu\lambda_\nu a_\nu^2
  =[A_0{\rm e}^{-\pi\sqrt{r}}]^2\,\|\bar u_n\|^2_{{t+2}}
  =[A_0{\rm e}^{-\pi\sqrt{r}}]^2\,\|g_n\|^2_{{t}}\nonumber\\
 &\le&
 [A_0{\rm e}^{-\pi\sqrt{r}}]^2\,\|f\|^2_{{t}}.
 \end{eqnarray}
 We now take $r$ as the smallest integer for which  $ \pi \sqrt{r}\ge  \alpha \log n$.   For this choice of $r$, we obtain
 \be
 \label{approxbargn}
 \|\bar u_n-\hat u_n\|_{{t+2}}\le A_0\|f\|_{{t}}n^{-\alpha}.
 \ee
 When this is combined with \eref{ae12} by a triangle inequality in the $\HH^{t+2}$ norm, we obtain
 \be
 \label{au}
 \|u-\hat u_n\|_{{t+2}}\le Mn^{-\alpha}+ A_0\|f\|_{{-t}}n^{-\alpha} {\le} (A_0+1)\|f\|_{\cA^\alpha((\Sigma_n)_{n \geq 0},\HH^{t})}n^{-\alpha}.
 \ee
 Since $\hat u_n$ is in $\Sigma_{rn}$ and $r \sim C_\alpha [\log n]^2$, where $C_\alpha$ is a positive constant
  dependent solely on $\alpha$, by absorbing the constants $A_0+1$ and $C_\alpha$ into the definition of the constant $A=A(\alpha,\underline\lambda)$,
  we deduce \eqref{urate}. That completes the proof.
 \end{proof}

\smallskip

Suppose that in place of $\Sigma_n$ we use the spaces $\Sigma_n(\cR)$ with restrictions on the indices of the $e_\nu$
as described in \S\ref{ss:restricted}.  The proof of the last theorem then applies verbatim with these restrictions.  Hence, we have the following result.
 \begin{theorem}\label{r1rth}
 Suppose that $t\in\R$ and  $f\in\cA^\alpha(\Sigma_n(\cR))_{n \geq 0},\HH^{t})$, $\alpha>0$;  then, the corresponding  solution $u\in \HH^{t+2}$ to {\rm \eref{varprob1}} satisfies the following inequality:
 \be
 \label{urate-2}
  \sigma_{n[\log n]^2}(u)_{\HH^{t+2}}\le A\|f\|_{\cA^\alpha((\Sigma_n(\R))_{n \geq 0},\HH^{t})}n^{-\alpha},\quad n=1,2,\dots,
  \ee
  where the constant $A$ depends only on $\alpha$ and the smallest eigenvalue $\underline\lambda>0$ of $\cB$.
    In particular, $u$ belongs to
$\cA^{\alpha'}((\Sigma_n(\cR))_{n \geq 0},\HH^{t+2})$ for all $\alpha' \in (0,\alpha)$.
\end{theorem}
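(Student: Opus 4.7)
The plan is to follow the proof of Theorem \ref{r1th} line by line, verifying at each step that the restriction condition on indices in $\cR$ is preserved by the construction. Given $n\ge 1$, I would first invoke the hypothesis $f\in \cA^\alpha((\Sigma_n(\cR))_{n\ge 0},\HH^t)$ to obtain $g_n\in \Sigma_n(\cR)$ with $\|f-g_n\|_t\le Mn^{-\alpha}$, writing $g_n=\sum_{j=1}^k h_j$ where each $h_j=T_{\Lambda(j),C(j)}$ has $\Lambda(j)\subset \cR_{\bm(j)}$. Then $\bar u_n:=\cB^{-1}g_n$ satisfies $\|u-\bar u_n\|_{t+2}=\|f-g_n\|_t\le Mn^{-\alpha}$ by the isometry property \eqref{isometry}.

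The crucial observation, and the only point requiring verification beyond the unrestricted case, is that the exponential-sum surrogate $S_r$ from Lemma \ref{explemma}, when applied to produce
\[
\hat h_j:=\sum_{\nu\in\Lambda(j)}S_r(\lambda_\nu)\,c_\nu(j)\,e_\nu
=\sum_{k=1}^r \omega_k \Big\{\sum_{\nu\in\Lambda(j)} \mathrm{e}^{-\alpha_k\lambda_\nu}\,c_\nu(j)\,e_\nu\Big\},
\]
preserves the index support. Indeed, since $\lambda_\nu=\lambda_{1,\nu_1}+\cdots+\lambda_{d,\nu_d}$, the factor $\mathrm{e}^{-\alpha_k\lambda_\nu}=\prod_{i=1}^d \mathrm{e}^{-\alpha_k\lambda_{i,\nu_i}}$ is separable and strictly positive, so the inner sum is a rank-one tensor with exactly the same index set $\Lambda(j)=\Lambda_1(j)\times\cdots\times\Lambda_d(j)\subset \cR_{\bm(j)}$. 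Hence each inner bracketed term lies in $\Sigma_{\bm(j)}(\cR)$, and therefore $\hat u_n:=\sum_{j=1}^k\hat h_j\in \Sigma_{rn}(\cR)$ precisely because the restriction $\cR$ constrains only which coordinate indices may appear, not the coefficients attached to them.

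The error estimate then proceeds verbatim as in Theorem \ref{r1th}: using $\underline\lambda>0$ as a lower bound on all $\lambda_\nu$, applying the $L_\infty[\underline\lambda,\infty)$ bound of Lemma \ref{explemma} gives
\[
\|\bar u_n-\hat u_n\|_{t+2}^2\le [A_0\mathrm{e}^{-\pi\sqrt r}]^2 \|g_n\|_t^2\le [A_0\mathrm{e}^{-\pi\sqrt r}]^2\,\|f\|_t^2,
\]
with $A_0=16/\underline\lambda$. Choosing $r$ as the smallest integer with $\pi\sqrt r\ge \alpha\log n$ yields $r\lesssim C_\alpha[\log n]^2$ and $\|\bar u_n-\hat u_n\|_{t+2}\le A_0\|f\|_t n^{-\alpha}$. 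A triangle inequality combining this with \eqref{ae12} then gives \eqref{urate-2}, after absorbing constants into $A=A(\alpha,\underline\lambda)$. The final statement about membership in $\cA^{\alpha'}((\Sigma_n(\cR))_{n\ge 0},\HH^{t+2})$ for $\alpha'\in(0,\alpha)$ follows by absorbing the logarithmic factor $[\log n]^2$ into a fractional power loss in the rate, as in the unrestricted case.

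The main conceptual point, which is mild, is simply the observation that the exponential-sum approximation mechanism respects the index-restriction structure because separable multiplicative factors preserve tensor supports; no new analytic obstacle arises, which is why the author correctly states that the earlier proof applies verbatim.
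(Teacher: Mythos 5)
Your proposal is correct and takes essentially the same approach as the paper, which simply observes that the proof of Theorem~\ref{r1th} applies verbatim with the restrictions. Your elaboration of the key point---that the separable factor $\mathrm{e}^{-\alpha_k\lambda_\nu}=\prod_i \mathrm{e}^{-\alpha_k\lambda_{i,\nu_i}}$ multiplies a rank-one coefficient tensor without altering its index support $\Lambda(j)\subset\cR_{\bm(j)}$, so the restriction $\cR$ is preserved---is exactly the content the paper leaves implicit.
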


We have formulated the above results in terms of the approximation spaces $\cA^\alpha$.  This only gives information for
polynomial order decay.   We can work in more generality.  Let $\gamma$  be a strictly monotonically increasing function defined on $[0,\infty)$, with $\gamma(0)> 0$.   We   define the approximation class $\cA^\gamma((\Sigma_n)_{n \geq 0},\HH^t)$ as the set of all $v\in \HH^t$ such that
\be
\label{sigma-n}
 \gamma(n)\,\sigma_n(f)_{\HH^t}\le M,\quad n\ge 0,
\ee
with (quasi-) norm defined as the smallest such $M$.   We then have the following theorem.

\begin{theorem}\label{r2th}  Suppose  that $t\in\R$ and that $f\in\cA^\gamma((\Sigma_n)_{n \geq 0},\HH^{t})$;  then, the corresponding  solution $u \in \HH^{t+2}$ to {\rm \eref{varprob1}} satisfies the following inequality:
 \be
 \label{urate2}
  \sigma_{n[\log \gamma(n)]^{2}}(u)_{\HH^{t+2}}\le A\|f\|_{\cA^\gamma((\Sigma_n(\R))_{n \geq 0},\HH^{t})}[\gamma(n)]^{-1},\quad n=1,2,\dots,
  \ee
  where the constant $A$ depends only on   the smallest eigenvalue $\underline\lambda>0$ of $\cB$.
    In particular, $u$ belongs to
$\cA^{\bar \gamma}((\Sigma_n)_{n \geq 0},\HH^{t+2})$, where  $\bar \gamma(x) := \gamma(G^{-1}(x))$  
 with $G(x):=x[\log \gamma(x)]^2$, $x > 0$.
\end{theorem}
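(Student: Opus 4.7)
The plan is to mimic the proof of Theorem \ref{r1th} line by line, replacing the polynomial rate $n^{-\alpha}$ by $\gamma(n)^{-1}$ and tracking how the tuning of the exponential-sum order $r$ must change. First I would fix $n \geq 1$ and invoke the hypothesis $f \in \cA^\gamma((\Sigma_n)_{n \geq 0},\HH^{t})$ to produce $g_n \in \Sigma_n$ with $\|f-g_n\|_t \leq M\gamma(n)^{-1}$, where $M = \|f\|_{\cA^\gamma((\Sigma_n)_{n \geq 0},\HH^t)}$. Exactly as in \eref{bgn}, I would write $g_n = \sum_{j=1}^k h_j$ with $h_j = T_{\Lambda(j),C(j)}$ and set $\bar u_n := \cB^{-1}g_n$, so that $\|u-\bar u_n\|_{t+2} = \|f-g_n\|_t \leq M\gamma(n)^{-1}$ by the isometry \eref{isometry}.

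Next I would apply the exponential-sum approximation of Lemma \ref{explemma} to the reciprocal eigenvalues, defining $\hat u_n$ exactly as in \eref{htilde}--\eref{hjrep}, namely by replacing each factor $\lambda_\nu^{-1}$ by $S_r(\lambda_\nu)$. The separable structure $\mathrm{e}^{-\alpha_k\lambda_\nu}=\prod_{i=1}^d \mathrm{e}^{-\alpha_k\lambda_{i,\nu_i}}$ shows that $\hat u_n \in \Sigma_{rn}$. The decisive step is the choice of $r$: instead of demanding $\pi\sqrt{r} \geq \alpha \log n$, I would now take $r$ to be the smallest integer satisfying
\begin{equation*}
\pi\sqrt{r} \geq \log \gamma(n).
\end{equation*}
Running the estimate \eref{diff1-0} verbatim with $A_0 = 16/\underline\lambda$ then yields
\begin{equation*}
\|\bar u_n - \hat u_n\|_{t+2} \leq A_0\,\mathrm{e}^{-\pi\sqrt r}\,\|g_n\|_t \leq A_0\|f\|_t\,[\gamma(n)]^{-1},
\end{equation*}
and combining this with the estimate for $\|u-\bar u_n\|_{t+2}$ by the triangle inequality gives $\|u - \hat u_n\|_{t+2} \leq (A_0+1)\|f\|_{\cA^\gamma}[\gamma(n)]^{-1}$.

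Since by construction $r \leq 1 + \pi^{-2}[\log \gamma(n)]^2$, the element $\hat u_n$ lies in $\Sigma_{N}$ with $N \lesssim n[\log \gamma(n)]^2 = G(n)$, absorbing the implied constant into $A$. This establishes \eref{urate2}. For the final assertion, I would set $N = G(n)$ and invert: with $n = G^{-1}(N)$ the bound reads
\begin{equation*}
\sigma_N(u)_{\HH^{t+2}} \leq A\|f\|_{\cA^\gamma((\Sigma_n)_{n\geq 0},\HH^t)}\,[\gamma(G^{-1}(N))]^{-1} = A\|f\|_{\cA^\gamma((\Sigma_n)_{n\geq 0},\HH^t)}\,[\bar\gamma(N)]^{-1},
\end{equation*}
which is precisely membership in $\cA^{\bar\gamma}((\Sigma_n)_{n \geq 0},\HH^{t+2})$. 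The only real subtlety (and the point I would double-check) is the monotonicity assumption on $\gamma$: one needs $G(x) = x[\log\gamma(x)]^2$ to be strictly increasing on some $[x_0,\infty)$ so that $G^{-1}$ is well defined, which is immediate once $\gamma(0) > 0$ and $\gamma$ is monotone increasing; for small $n$ where $\log\gamma(n)$ may not dominate, one absorbs finitely many terms into the constant $A$. No other ingredient beyond Lemma \ref{explemma} and the isometry \eref{isometry} is needed, so the argument is essentially a "variable-$r$" version of the proof of Theorem \ref{r1th}.
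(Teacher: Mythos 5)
Your proposal is correct and takes essentially the same route as the paper: the paper's proof of Theorem~\ref{r2th} is exactly a one-line remark that one repeats the proof of Theorem~\ref{r1th} with $n^\alpha$ replaced by $\gamma(n)$ and $r$ the smallest integer with $\pi\sqrt{r}\ge \log\gamma(n)$, after which the final claim follows by putting $m=G(n)$, $n=G^{-1}(m)$. Your careful tracking of the case of small $n$ (where $[\log\gamma(n)]^2$ may be small) and of the monotonicity needed to invert $G$ is slightly more explicit than what the paper records, but it is the same argument.
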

\begin{proof}  The proof is in essence the same as that of Theorem \ref{r1th} with $n^{\alpha}$ replaced by $\gamma(n)$,
and $r \geq 1$ chosen so that $\pi \sqrt{r}\ge \log \gamma(n)$.  This gives \eref{urate2} from which the remaining claim easily follows
upon observing that \eref{urate2} just means $\sigma_{G(n)}(u)_{\HH^{t+2}}\le A\|f\|_{\cA^\gamma((\Sigma_n(\R))_{n \geq 0},\HH^{t})}[\gamma(n)]^{-1}$,
and putting $m=G(n), n=G^{-1}(m)$.
 \end{proof}

\begin{remark}
\label{rem:curse}  Let us remark on how the curse of dimensionality enters into  Theorem \ref{r2th}.  This theorem  says that for $f\in\cA^\gamma((\Sigma_n)_{n \geq 0},\HH^{t})$
the number of degrees of freedom needed to approximate the solution $u$ in $\HH^{t+2}$ to within accuracy $\ve$ is of the order
$\bar\gamma^{-1}(A\|f\|_{\cA^\gamma}/\ve)$ where $A$ is a fixed constant independent of the spatial dimension $d$.
Moreover, the approximations to $u$ are derived from nonlinear information on the data $f$ given in terms of low-rank approximations.
Hence, under this assumption on the data the curse of dimensionality is broken. However, the regularity assumption on
$f$ depends on $d$ because the larger $d$ the fewer degrees of freedom can be spent on each tensor factor.
 However, as addressed in more detail later, when the tensor factors approximate functions having a certain
(low-dimensional) Sobolev or Besov regularity the deterioration of accuracy when $d$ grows is expected to be at most algebraic in $d$
so that one can, in principle, still assert {\em tractability} in the sense that the computational work needed \rd{in order} to realize a given target accuracy
does not depend exponentially on $d$.
\end{remark}

\section{A Basis-Free Tensor-Sparsity Model}
\label{s:numerical}
\newcommand{\Ten}{\Theta}
 The primary purpose of the sparsity model considered \rd{thus} far is to highlight in a technically simple fashion, on the one hand,
the principal mechanism of how tensor-sparsity of the data is inherited by the solution and, on the other hand,
 the relevance of the structure of the spectrum  of elliptic operators in this context.
 While this is an interesting theoretical result, it does not lead directly to a numerical method \rd{that}
exploits this compressibility in an efficient numerical algorithm.  There are several reasons for this.

   The first of  these is the fact that, in general, the
  eigenfunction basis is not available to us, and computing even the first few eigenfunctions will generally require significant computational effort.  Moreover, even, if we had computed the eigenfunction basis
  and the representation of $f$ in this basis, it is not a simple task to find a good low-rank approximation $g$ to $f$.

   A second point is that  in the classical setting of differential operators, the assumption that
$f$ is compressible in the eigenfunction basis may in fact exclude rather simple functions.    Consider, for example,  the Poisson equation subject to a homogeneous Dirichlet boundary condition:
$$
-\Delta u = 1 \quad \mbox{in} \quad  D = (0,1)^d,\quad u=0\quad\mbox{on}\quad \partial D.
$$
In this case one has $e_\nu(x) = 2^{-d/2} \prod_{j=1}^d \sin(\pi \nu_j x_j)$, $\nu\in \N^d$. Note that the right-hand side $f(x)\equiv 1$
is a {\em rank-one} function. However, its expansion in terms of the $e_\nu$ has infinitely many terms. Because of the boundary conditions,
parametrizing the solution and the right-hand side with respect to the same basis may not be the most favorable approach. In fact, each
univariate factor $1$ of $f$ has an infinite expansion in the univariate basis $(\sin (\pi k x_j))_{k\in\N}$.
Thus we shall next propose a way \rd{of describing} tensor-sparsity without referring to any particular background basis
 and where the corresponding regularity notion does not become stronger when $d$ increases.

Perhaps, the main shortcoming of the idea of sparsity introduced in the previous sections is that  the tensors that arise in the  approximation lack stability.   Indeed, we have no control on how well the approximants $g$ can themselves be numerically resolved.  Typically, the computability of $g$ is related to its regularity, for example, in the family of spaces $\HH^s$.

The purpose of this section is to remedy some of these deficiencies.   This will be accomplished by putting forward a form of
tensor sparsity and compressibility that, on the one hand, does not depend on the eigenfunction basis and, in fact, is independent of any basis, and on the other hand, it imposes a regularity on the approximates $g$ to $f$, thereby making them computable.  Moreover,
as we shall show, this  leads to an efficient numerical implementation.  To accomplish this, we will impose a certain stability for the approximating tensors.  To understand the role of stability, let us begin with some general comments about tensor approximation.

Trying to approximate a function $f$ in a Banach space $X$ in terms of (possibly short) sums of rank-one functions, which are merely required to belong to the same space $X$, is an ill-posed problem. This is well-known even in the discrete (finite-dimensional) case,
see \cite{BiCaLoRo1979,LiSi2008}. For instance, the limit of a sequence of rank-two tensors may have rank three. It is also known (see \cite{KrDiSt2008})
that the elements of such sequences degenerate in the sense that the sums have uniformly bounded norms but the individual summands do not.
This is why one resorts (predominantly in the discrete setting) to  {\em subspace-based} tensor formats
that   are inherently more stable, although they are
also more involved technically \cite{HaKu2009,Gr2010}.
Note, however, that even if one managed to stably approximate $f$ in $X$   by a short sum of {\em arbitrary} rank-one tensors in $X$, in the continuous (infinite-dimensional) setting,
these rank-one functions could be arbitrarily hard to compute.
Thus, the rank-one functions that are allowed to enter into a competition for best tensor approximations to a given function $v$ need to be
{\em tamed} in one way or another.   The standard way of accomplishing this and the one we shall follow below is to
add a penalty (regularization) term to the approximation error.


\subsection{An alternative sparsity  model}\label{ssec:alter2}
 To understand the form   a penalty or regularization   should take, let us
assume that we wish to approximate $v$ in \rd{the} $\HH^t$ norm.     We denote by $\cT_r(\HH^t)$ the set of all rank $r$ tensors
\be
\label{rrt2}
g=\sum_{k=1}^rg^{(k)},\quad g^{(k)}=g_1^{(k)}\otimes\cdots\otimes g_d^{(k)},\quad g\in\HH^t,
\ee
and $\cT(\HH^t):= \bigcup_{r\in \N_0}\cT_r(\HH^t)$.
 When approximating $v$ by the elements from ${\cT_r(\HH^t)}$, the approximation error is not the only issue.   We also want that the approximant  $g$ itself to be approximable (also in $\HH^t$).   To guarantee this,  we will  impose regularity constraints on $g$ and therefore
its components $g_j^{(k)}$ (see Lemma \ref{lem:H-1}).   To keep matters simple, we take this regularity to be membership in the space $\HH^{t+\zeta}$ for some arbitrary but fixed $\zeta>0$.  As will be seen below, in the present particular context of solving \eref{varprob-2}, there is a natural choice of $\zeta$.
 Note that one could also assume other types of regularity
measures  such as Besov regularity in the classical PDE setting.

We consider for any $r\in \N_0$, $v\in \HH^t$, $\zeta >0$, the $K$-functional
\beqn
\label{K-funct}
K_r(v,\mu):=K_r(v,\mu;\HH^t,\HH^{t+\zeta}):= \inf_{g\in \cT_r(\HH^{t+\zeta})}\big\{\|v - g\|_t + \mu \tripnorm{g}_{r,t+\zeta}\big\},
\eeqn
where, for any $g\in\cT_r(\HH^s)$ of the form \eref{rrt2},
\beqn
\label{tripnorm}
\tripnorm{g}_{r,s} := \inf \left[\max\,\{\|g\|_s, \|g^{(k)}\|_s : k=1,\ldots, r\}\right],
\eeqn
where the infimum is taken over all representations of $g$,  of the form \eref{rrt2}, using $r$ rank-one tensors.
We infer from Lemma \ref{lem:compact}   that the infimum in \eref{K-funct} is attained for some $g\in \cT_r(\HH^{t+\zeta})$.
Let us make some comments that will illuminate the role of $K_r$.   This functional measures how well $v$ can be approximated by rank $r$ tensors (in the norm of $\HH^t$) with the added constraint that the \rd{approximants} $g$ have bounded  $\HH^{t+\zeta}$ norm.  One question that arises is how should $\zeta$ be chosen.  In the context of solving PDEs, to  approximate the solution $u$ in a specified $\HH^t$ norm with a certain rate requires some excess regularity of $u$.  This excess regularity is quantified by $\zeta$.  We guarantee this excess regularity by assuming some regularity for $f$ and then applying a regularity theorem, {see \eref{isoext}}.  A second remark is that if we know that the function $f$ in \eref{K-funct} is indeed
in $\HH^{t+\zeta}$, {when $\HH^{t+\zeta}$ agrees with a classical Sobolev space,} then standard constructions of approximants to $v$ would also be in $\HH^{t+\zeta}$ and have norm in this space that does not exceed $C\|v\|_{\HH^{t+\zeta}}$.  However, we do not wish to assume \rd{that} we know $\|v\|_{\HH^{t+\zeta}}$ or $C$,
which is why we resort to a general $\mu$.

Since $K_r(v,\mu)$ is similar to an error functional, we can obtain approximation classes in the same way we have defined the classes $\cA^\alpha$ (which give error decay rate $O(n^{-\alpha})$)  and the more general classes $\cA^\gamma$ (which give error decay rate $1/\gamma(n)$).  We describe this in the case of general $\gamma$ since it  subsumes the special case of polynomial growth rates.   We begin with any  {\em growth sequence} $\{\gamma(r)\}_{r\in \N_0}$, $\gamma(0)=1$, that monotonically increases to infinity.  Given such a $\gamma$, we define
\beqn
\label{approx-norm}
\| v\|_{\bar \cA^\gamma(\HH^t,\HH^{t+\zeta}) }:= \sup_{r\in \N_0}\gamma(r)\, K_r(v, 1/\gamma(r);\HH^t,\HH^{t+\zeta})
\eeqn
and the associated approximation class
\beqn
\label{appr-class2}
\bar \cA^\gamma:= \bar\cA^\gamma(\HH^t,\HH^{t+\zeta}):= \{v\in \HH^t: \| v\|_{\bar\cA^\gamma}<\infty\}.
\eeqn
We shall frequently use that, whenever  $v\in \bar\cA^\gamma(\HH^t,\HH^{t+\zeta})$, there exists for each $r\in \N_0$ a $g_r\in \cT_r(\HH^t)$ such that
\beqn
\label{consequences}
\|v - g_r\|_t \leq \gamma(r)^{-1} \| v\|_{\bar \cA^\gamma(\HH^t,\HH^{t+\zeta})},\quad \|g_r\|_{t+\zeta} \leq \tripnorm{g_r}_{r,t+\zeta} \leq \| v\|_{\bar \cA^\gamma(\HH^t,\HH^{t+\zeta})}.
\eeqn
 In other words, the approximants $g_r$ not only provide the approximation rate $1/\gamma(r)$ by rank $r$ tensors but also provide a control on their regularity.

Of particular interest are  sequences $(\gamma(n))_{n \geq 0}$ that {\em have a rapid growth}, because this reflects the closeness of $\HH^s$-stable approximations of rank-one summands to $v$.
For sequences $\gamma$ that grow faster  than any polynomial rate and hence   violate the requirement that
\beqn
\label{pol}
\gamma({2n})\leq C\gamma(n),\quad n\in \N,
\eeqn
 the corresponding approximation classes are no longer linear.
Nevertheless, when $\gamma(n)= {\rm e}^{\alpha n}$ for instance, then the sum of any two elements is still in $\bar\cA^{\hat\gamma}$ where $\hat\gamma(n)= {\rm e}^{\frac{\alpha}2 n}$.\\

\subsection{Approximations to \texorpdfstring{$\cB^{-1}$}{inv(B)}}\label{sec:main-a}

As in the case of Theorem \ref{r1th}, the key vehicle for proving regularity theorems for the new approximation spaces  are {\em exponential sum} approximations.  While these have been used above for the approximation of the scalars $\lambda_\nu^{-1}$,
we shall now employ them to construct {\em approximate inverses} to $\cB^{-1}$, again based
on Lemma \ref{explemma}.  In contrast with the approximate inversion of matrices (see \cite{Gr2004,Kh2008}), in the
infinite-dimensional setting considered here, we shall take into account the mapping properties of the operator on
the scale of generalized Sobolev spaces $\HH^t$ with $t \in \mathbb{R}$.

In order to apply Lemma \ref{explemma} for approximating  the solution of $\cB \uu = f$, we recall that, by  \eref{Fourier2},
$$
u= \sum_{\nu\in\N^d}\lambda_\nu^{-1}\dup{f,e_\nu}e_\nu.
$$
Note that by  \eref{eigenexp},
\beqn
\label{sr}
\lambda_\nu^{-1}e_\nu \approx S_r(\lambda_\nu)\,e_\nu = \sum_{k=1}^r \rd{\omega_{r,k}}\,{\rm e}^{-\rd{\alpha_{r,k}}\lambda_\nu}\,e_\nu =\sum_{k=1}^r \rd{\omega_{r,k}}\,{\rm e}^{-\rd{\alpha_{r,k}}\cB}\,e_\nu.
\eeqn
Hence, formally,
an approximation to $\cB^{-1}f$ is
given by
 \beqn
\label{expD}
S_r(\cB)f := \sum_{k=1}^r \rd{\omega_{r,k}}\,{\rm e}^{-\rd{\alpha_{r,k}}\cB}f.
 \eeqn
The following proposition establishes the mapping properties of the operators $S_r(\cB)$ and estimates for how well
this operator approximates $\cB^{-1}$.
\begin{proposition}
\label{thm3.1}
Let $C_0:=\max\{8,\frac{2}{\underline \lambda}\}$, $t\in\R$, and $v\in\HH^t$.

\noindent
{\rm (i)}  If $t\le s\le t+2$,
then,
\beqn
\label{norms*}
\|\cB^{-1} -  S_r(\cB)\|_{\HH^{t}\to \HH^s} \leq C_0  {\rm e}^{-\frac{(2-(s-t))\pi}{2}\sqrt{r}}.
\eeqn

\noindent
{\rm (ii)}
In particular, for any   $\xi \in [0,2]$, one has
 \beqn
\label{sterror}
\|{\cB}^{-1}v - S_r(\cB)v\|_{t+\xi}
\leq C_0 {\rm e}^{-\frac{(2-\xi)\pi}{2}\sqrt{r}}\|v\|_{{t}}.
\eeqn

\noindent
{\rm (iii)}
Moreover,
\beqn
\label{sDbound}
\|S_r(\cB)v\|_{t+2}\leq \left(C_0+1\right) \|v\|_{t}.  
\eeqn

\noindent
{\rm (iv)}  If $v$ is a rank-one tensor, then $S_r(\cB)v = \sum_{k=1}^r v^{(k)}$, where each $v^{(k)}:= \rd{\omega_{r,k}}\,{\rm e}^{-\rd{\alpha_{r,k}}\cB}v$ is a rank-one tensor, which satisfies
\be
\label{ronenorm}
\|v^{(k)}\|_{t+2} \le   {(C_0+1)^{1/2}}\|v\|_{t}.
\ee

\end{proposition}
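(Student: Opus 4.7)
The plan is to translate all four claims into pointwise bounds on the eigenvalues of $\cB$. For $v = \sum_{\nu \in \N^d} \hat v_\nu e_\nu \in \HH^t$ the Parseval identity \eqref{Hs-norms} gives
\[
\|(\cB^{-1} - S_r(\cB))v\|_s^2 = \sum_{\nu \in \N^d} \lambda_\nu^{s}[\lambda_\nu^{-1} - S_r(\lambda_\nu)]^2 |\hat v_\nu|^2,
\]
and $\|S_r(\cB)v\|_{t+2}^2 = \sum_\nu \lambda_\nu^{t+2} S_r(\lambda_\nu)^2 |\hat v_\nu|^2$. Hence (i) reduces to showing that, with $\xi := s - t \in [0,2]$,
\[
\lambda^{\xi/2}\bigl|\lambda^{-1} - S_r(\lambda)\bigr| \le C_0\, e^{-(2-\xi)\pi\sqrt{r}/2} \quad\text{for all } \lambda \ge \underline\lambda,
\]
and (iii) reduces to $\lambda S_r(\lambda) \le C_0 + 1$ on the same range.

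For part (i), I would split $[\underline\lambda, \infty)$ at the threshold $T_r := \tfrac{1}{8} e^{\pi \sqrt{r}}$ that appears in Lemma \ref{explemma}. On $[\underline\lambda, T_r]$ I apply Lemma \ref{explemma}(i) with $\beta = \underline\lambda$ together with the crude bound $\lambda^{\xi/2} \le T_r^{\xi/2} = 8^{-\xi/2} e^{\xi\pi\sqrt{r}/2}$; the two exponentials combine to the desired rate $e^{-(2-\xi)\pi\sqrt{r}/2}$, with prefactor involving $16/\underline\lambda$. On $[T_r, \infty)$ I invoke Lemma \ref{explemma}(ii) so that $0 \le S_r(\lambda) \le \lambda^{-1}$ and hence $|\lambda^{-1} - S_r(\lambda)| \le \lambda^{-1}$; since $\xi/2 - 1 \le 0$, the quantity $\lambda^{\xi/2-1}$ attains its maximum at $\lambda = T_r$ and again produces the rate $e^{-(2-\xi)\pi\sqrt{r}/2}$, now with prefactor $8^{1-\xi/2}$. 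Tracking both prefactors against the definition $C_0 = \max\{8, 2/\underline\lambda\}$ yields the stated bound. Statements (ii) and (iii) then follow at once: (ii) is merely (i) rewritten with $s = t + \xi$, while (iii) follows from $\|S_r(\cB)v\|_{t+2} \le \|\cB^{-1}v\|_{t+2} + \|(\cB^{-1} - S_r(\cB))v\|_{t+2}$, the isometry \eqref{isometry} for the first term, and the endpoint case $\xi = 2$ of (i) (where the exponential prefactor degenerates to $1$) for the second.

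For part (iv), the essential observation is that by \eqref{tensor-sum} the $\cB_j$'s act on disjoint tensor factors and pairwise commute on $\HH$, so
\[
e^{-\alpha \cB} = \bigotimes_{j=1}^d e^{-\alpha \cB_j}, \qquad \alpha > 0,
\]
as is already seen on eigenfunctions in \eqref{eigenexp}. Therefore, if $v = v_1 \otimes \cdots \otimes v_d$ is rank-one, each $v^{(k)} = \omega_{r,k} e^{-\alpha_{r,k}\cB} v$ is rank-one as well; the norm estimate follows from the observation that, since all weights $\omega_{r,\ell}$ are nonnegative by Lemma \ref{explemma}, one has $\omega_{r,k} e^{-\alpha_{r,k}\lambda_\nu} \le S_r(\lambda_\nu)$ spectrally, so $\|v^{(k)}\|_{t+2} \le \|S_r(\cB)v\|_{t+2}$ and (iii) applies.

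The main technical obstacle is the bookkeeping in part (i): the exponent $(2-\xi)\pi\sqrt{r}/2$ must interpolate linearly between the full rate $\pi\sqrt{r}$ at $\xi = 0$ (no Sobolev gain) and no rate at $\xi = 2$ (full gain of $\cB^{-1}$), and this interpolation comes out cleanly only because $T_r$ is precisely the crossover between the two estimates of Lemma \ref{explemma}. With any other cutoff the two halves of the argument would scale differently in $\xi$ and $C_0$ would acquire a $\xi$-dependence. This regularity-shift-aware use of exponential sums is what distinguishes the present calculation from the discrete approximate-inverse constructions in \cite{Gr2004,Kh2008}, where domain and range are identified and no $\xi$-balancing is required.
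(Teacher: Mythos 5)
Your proof follows essentially the same route as the paper: reduce everything to a pointwise spectral bound on $|\lambda^{-1}-S_r(\lambda)|\lambda^{(s-t)/2}$ via the Parseval identity \eqref{Hs-norms}, split $[\underline\lambda,\infty)$ at the threshold $T_r=\tfrac18 e^{\pi\sqrt r}$, use the uniform error bound of Lemma~\ref{explemma}(i) together with $\lambda\le T_r$ on the small side and $0\le S_r(\lambda)\le\lambda^{-1}$ on the large side, then take $s=t+2$ plus the triangle inequality and the isometry \eqref{isometry} for (iii). This is exactly the paper's split in \eqref{twocases}, with the two lines of that display corresponding to your two intervals. Parts (ii) and (iii) are handled identically; and your observation that $T_r$ is the unique cutoff for which both halves scale the same way in $\xi$ is the right structural remark.

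The one place where you and the stated bound disagree is (iv): your argument (comparing $v^{(k)}$ termwise against $S_r(\cB)v$ and applying (iii)) yields $\|v^{(k)}\|_{t+2}\le (C_0+1)\|v\|_t$, not the stated $(C_0+1)^{1/2}\|v\|_t$. Note, however, that the paper's own displayed chain has the same slip: it passes from $\sup_\nu \omega_{r,k}\lambda_\nu e^{-\alpha_{r,k}\lambda_\nu}\le 1+C_0$ (which is what \eqref{used} gives) to $\sup_\nu(\omega_{r,k}\lambda_\nu e^{-\alpha_{r,k}\lambda_\nu})^2\le 1+C_0$, silently dropping a square. The factor $(C_0+1)^{1/2}$ is only recoverable if one uses the sharper estimate $\lambda_\nu S_r(\lambda_\nu)\le 1+\tfrac{2}{\underline\lambda}$ and additionally assumes $\underline\lambda\ge 1$, so that $(1+\tfrac{2}{\underline\lambda})^2\le 9= 1+C_0$; the paper indeed normalizes to $\underline\lambda\ge 1$ later in \S 7, but not when stating Proposition~\ref{thm3.1}. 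A similar caveat applies to your constant tracking in (i): the prefactor $\tfrac{16}{\underline\lambda}\cdot 8^{-\xi/2}$ from the small-$\lambda$ side exceeds $C_0=\max\{8,2/\underline\lambda\}$ when $\underline\lambda<2$ and $\xi$ is small, an issue shared by the second line of \eqref{twocases} in the paper. Neither of these affects the rate $e^{-(2-\xi)\pi\sqrt r/2}$, only the absolute constant, and both are flaws you inherit from the source rather than introduce; but you should make the dependence on a lower bound for $\underline\lambda$ explicit if you want the asserted constant $C_0$ to be literally correct.
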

\begin{proof}
(i) Defining
$$
\ve_{r,\nu}:= \lambda_\nu^{-1}- S_r(\lambda_\nu),\quad \nu\in \N^d,
$$
we know from \eref{expfa} that
\be
\label{inverseapprox}
|\ve_{r,\nu}|\le \frac{16}{\underline\lambda}{\rm e}^{-\pi\sqrt{r}},\quad \nu\in\N^d.
\ee
Now, we can write
\begin{eqnarray}
\label{1error}
{\cB}^{-1}v &=&   \sum_{\nu\in\N^d}\Mdual{ v,e_\nu }\big(S_r(\lambda_\nu) + \ve_{r,\nu}\big) e_\nu  
=  S_r(\cB)v +
  \sum_{\nu\in\N^d}\ve_{r,\nu} \Mdual{ v,e_\nu }e_\nu .  
\end{eqnarray}
Hence,
\begin{eqnarray}
\label{Hsest}
\|{\cB}^{-1}v - S_r(\cB)v\|_{s}&=&\left(\sum_{\nu\in\N^d}\ve_{r,\nu}^2 \lambda_\nu^s |\langle{v, e_\nu}\rangle|^2\right)^{1/2}\nonumber \\
&\le& \left(\sum_{\nu\in\N^d}  \lambda_\nu^t |\langle{v, e_\nu}\rangle|^2\right)^{1/2}\sup_{\nu\in \N^d}|\ve_{r,\nu}| \lambda_\nu^{\frac{s-t}{2}}\nonumber \\
& \le& \|v\|_{t}\sup_{\nu\in \N^d}|\ve_{r,\nu}| \lambda_\nu^{\frac{s-t}{2}}.
\end{eqnarray}
We write $s-t=2-\xi $  with  $\xi\in [0,2]$ and
rewrite $\ve_{r,\nu}^2 \lambda_\nu^s = \ve_{r,\nu}^2 \lambda_\nu^{2-\xi}\lambda_\nu^{t}$.   Now,
\beqn
\label{twocases}
\ve_{r,\nu}^2\lambda_\nu^{s-t}=\ve_{r,\nu}^2\lambda_\nu^{2-\xi}=\left\{
\begin{array}{ll}
(\ve_{r,\nu}\lambda_\nu)^2\lambda_\nu^{-\xi}\leq 8^{\xi}{\rm e}^{-\xi\pi\sqrt{r}},& \lambda_\nu > \frac 18 {\rm e}^{\pi\sqrt{r}},\\
(\ve_{r,\nu}\lambda_\nu)^{2-\xi} \ve_{r,\nu}^{\xi}\leq [\frac{2}{\underline \lambda}]^{2-\xi} {\rm e}^{-\xi\pi\sqrt{r}}, & \lambda_\nu \leq \frac 18 {\rm e}^{\pi\sqrt{r}},
\end{array}\right.
\eeqn
 where in the first estimate we used the fact that $\ve_{r,\nu}\lambda_\nu\le 1$ because of \eref{expfa1}.  The second estimate used \eref{inverseapprox}.   So, \eref{norms*} follows from \eref{Hsest}.   This proves (i).

\smallskip

 \noindent
 (ii) The assertion \eref{sterror} is just a restatement of  \eref{norms*}.

\smallskip

\noindent
 {(iii)}   The bound \eref{sDbound}   follows  from the fact that
$ \|\cB^{-1}\|_{\HH^t\to\HH^{t+2}}\le 1$.

\smallskip

\noindent
(iv)  Each of the components $v^{(k)}$ satisfies
\begin{eqnarray*}
 \|v^{(k)}\|_{t+2}^2 &=&  
\sum_{\nu\in\N^d}\lambda_\nu^{t+2}|\langle v^{(k)},e_\nu\rr|^2 
\\
& = &
 \rd{\omega_{r,k}^2} \sum_{\nu\in\N^d} \lambda_\nu^{t} (\lambda_\nu  {\rm e}^{-\rd{\alpha_{r,k}}\lambda_\nu})^2|\langle v ,e_\nu\rr|^2 \\
 &\le & \sup_{\nu\in\N^d} (\rd{\omega_{r,k}}\lambda_\nu  {\rm e}^{-\rd{\alpha_{r,k}}\lambda_\nu})^2\|v\|_{t}^2\leq  {(1+C_0)}\|v\|_{t}^2,
\end{eqnarray*}
where we \rd{have} used \eref{eigenexp} in the first equality.  In the last inequality, we use\rd{d} {that, by   \eref{expfa},
$xS_r(x)\leq 1+ \frac{2}{\underline\lambda}$ for $x\leq \frac 18 {\rm e}^{\pi\sqrt{r}}$ while, by \eref{expfa1},
$xS_r(x)\leq 1$ for $x\geq  \frac 18 {\rm e}^{\pi\sqrt{r}}$}
so that, for any $\nu\in\N^d$,
\be
\label{used}
\rd{\omega_{r,k}}\lambda_\nu  {\rm e}^{-\rd{\alpha_{r,k}}\lambda_\nu}\le \lambda_\nu  \sum_{k=1}^r \rd{\omega_{r,k}}{\rm e}^{-\rd{\alpha_{r,k}}\lambda_\nu }=\lambda_\nu S_r(\lambda_\nu) \le {1 +\frac{2}{\underline \lambda} \leq (1+C_0). } 
  \ee
 \end{proof}

\smallskip

Note that $({\cB}^{-1} -S_r(\cB))_{r\in \mathbb{N}}$ as a sequence of operators from $\HH^{t}$ to $\HH^s$ tends to zero as $r\to \infty$  in the corresponding operator norm as long as $s-t <2$, while the sequence is merely uniformly bounded when $s-t=2$.
\begin{remark}
\label{rem:remainvalid}
The statements of Proposition \ref{thm3.1} remain valid for $s<t$ and hence $\xi >2$ where, however, the constant $C_0$
depends then on $\xi = 2-(s-t)$, as can be seen from \eref{twocases}. Since Proposition \ref{thm3.1} will be applied later for $s\ge t$
we are content with the above formulation to avoid further dependencies of constants.
\end{remark}

\subsection{A new regularity theorem}
\label{ss:newregularity}
We can now state our new regularity theorem.

\begin{theorem}
\label{thm:mainII}   
Assume  that $f\in \bar\cA^\gamma(\HH^t,\HH^{t+\zeta})$ for the specified $t\in \R$ and  some $0<\zeta \le 2$. Let
\be
\label{defR}
R(r,\gamma):= \left\lceil C_1(\zeta)  
\big(\log(C_0\gamma(r))\big)^2\right\rceil,
\ee
where $C_1(\zeta):= \frac{4}{(\pi\zeta)^2}$ and $C_0$ is the constant in Proposition \ref{thm3.1},
and define the tempered sequence
\beqn
\label{gammahat}
\hat\gamma(m):= \gamma(r),\quad rR(r,\gamma)\le m<(r+1)R(r+1,\gamma), \quad r=1,2,\dots .
\eeqn

Then, the solution $u$  to the variational problem \eref{varprob-2}
 belongs to $\bar\cA^{\hat\gamma}(\HH^{t+2},\HH^{t+2+\zeta})$ and
\beqn
\label{hatgamma}
\| u\|_{\bar\cA^{\bar \gamma}(\HH^{t+2},\HH^{t+2+\zeta})}\leq  (3+C_0)\|f\|_{\bar \cA^\gamma(\HH^t,\HH^{t+\zeta})},
 \eeqn

Moreover, the mapping that takes   the data $f$ into a rank-$r$ approximation to
$u$, realizing the rate $\hat \gamma$, is continuous.
\end{theorem}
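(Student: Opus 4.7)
The plan is a two-stage approximation. First, using \eqref{consequences}, for each $r$ I would pick a near-best rank-$r$ approximant $g_r \in \cT_r(\HH^{t+\zeta})$ to $f$ satisfying $\|f-g_r\|_t \le \gamma(r)^{-1}\|f\|_{\bar\cA^\gamma}$ and $\tripnorm{g_r}_{r,t+\zeta} \le \|f\|_{\bar\cA^\gamma}$. Second, I would apply the exponential-sum approximate inverse and define the candidate $u_m := S_R(\cB)\, g_r$ with $R := R(r,\gamma)$ as in \eqref{defR}. The essential structural observation is that, by \eqref{eigenexp}, each operator $\mathrm{e}^{-\alpha_{R,\ell}\cB}$ factors as $\bigotimes_{j=1}^d \mathrm{e}^{-\alpha_{R,\ell}\cB_j}$ and therefore preserves rank-one structure; expanding $g_r = \sum_{k=1}^r g_r^{(k)}$ and $S_R(\cB) = \sum_{\ell=1}^R \omega_{R,\ell}\, \mathrm{e}^{-\alpha_{R,\ell}\cB}$ exhibits $u_m$ as a sum of at most $r\,R(r,\gamma) \le m$ rank-one tensors for every $m$ in the window on which $\hat\gamma(m) = \gamma(r)$.

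For the error I would split $u - u_m = \cB^{-1}(f - g_r) + (\cB^{-1} - S_R(\cB))\, g_r$. The first term is handled by the isometry \eqref{isometry}: $\|\cB^{-1}(f-g_r)\|_{t+2} = \|f-g_r\|_t \le \gamma(r)^{-1}\|f\|_{\bar\cA^\gamma}$. For the second term, the whole point of enforcing excess regularity is that $g_r$ sits in $\HH^{t+\zeta}$, so Proposition \ref{thm3.1}(ii) applied with starting space $\HH^{t+\zeta}$ and target $\HH^{t+2}$, i.e.\ with shift $\xi = 2-\zeta$, yields $\|(\cB^{-1} - S_R(\cB))g_r\|_{t+2} \le C_0\, \mathrm{e}^{-\frac{\zeta\pi}{2}\sqrt{R}}\, \|g_r\|_{t+\zeta}$. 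The definition of $R(r,\gamma)$ in \eqref{defR} is calibrated precisely so that this exponential does not exceed $\gamma(r)^{-1}$, giving $\|u - u_m\|_{t+2} \le 2\gamma(r)^{-1}\|f\|_{\bar\cA^\gamma}$.

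For the regularity side I would invoke Proposition \ref{thm3.1}(iii) at level $t+\zeta$, obtaining $\|u_m\|_{t+2+\zeta} \le (C_0+1)\|g_r\|_{t+\zeta} \le (C_0+1)\|f\|_{\bar\cA^\gamma}$, and Proposition \ref{thm3.1}(iv) applied to each rank-one factor $g_r^{(k)}$ to bound every summand $\omega_{R,\ell}\, \mathrm{e}^{-\alpha_{R,\ell}\cB}\, g_r^{(k)}$ in $\HH^{t+2+\zeta}$ by $(C_0+1)^{1/2}\|g_r^{(k)}\|_{t+\zeta} \le (C_0+1)^{1/2}\|f\|_{\bar\cA^\gamma}$. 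Together these yield $\tripnorm{u_m}_{m, t+2+\zeta} \le (C_0+1)\|f\|_{\bar\cA^\gamma}$, and assembling the two pieces in the $K$-functional with weight $\hat\gamma(m)^{-1} = \gamma(r)^{-1}$ produces
\[
K_m\!\left(u,\, 1/\hat\gamma(m);\, \HH^{t+2},\, \HH^{t+2+\zeta}\right) \le (3+C_0)\, \gamma(r)^{-1}\, \|f\|_{\bar\cA^\gamma},
\]
which, after multiplying by $\hat\gamma(m)$ and taking the supremum over $m$, gives \eqref{hatgamma}.

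Finally, the continuity claim reduces to the observation that $S_R(\cB)$ is a bounded linear operator on the relevant Hilbert spaces by Proposition \ref{thm3.1}(iii), so the map $g_r \mapsto u_m$ is automatically continuous; what remains is continuity of the selection $f \mapsto g_r$. This is the main technical obstacle, since exact best rank-$r$ approximations need not depend continuously on the target. I would circumvent it by taking $g_r$ from a stable near-best selection rather than the exact minimizer, exploiting the compactness of $\cT_r(\HH^{t+\zeta})$-balls in $\HH^{t}$ supplied by Lemma \ref{lem:compact}; this only costs an additional factor that can be absorbed into the constant $3+C_0$.
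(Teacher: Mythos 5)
Your argument matches the paper's proof in every essential step: the choice of $g_r$ via \eqref{consequences}, the definition $\bar u = S_R(\cB)g_r$, the error split into $\cB^{-1}(f-g_r)$ (controlled by the isometry \eqref{isometry}) and $(\cB^{-1}-S_R(\cB))g_r$ (controlled by Proposition \ref{thm3.1}(ii) with shift $\xi = 2-\zeta$, which is exactly what the calibration of $R(r,\gamma)$ in \eqref{defR} is designed for), the rank count $rR(r,\gamma)\le m$, the triple-norm bound via Proposition \ref{thm3.1}(iii)--(iv), and the final assembly of $K_m$ producing the constant $3+C_0$. The algebra of your decomposition $u-u_m = \cB^{-1}(f-g_r)+(\cB^{-1}-S_R(\cB))g_r$ is the same telescoping the paper performs, only written additively rather than via $\|f-\cB\bar u\|_t$; the two are equivalent.

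The one place where you diverge is the continuity claim. You read the statement as asserting continuity of the full composition $f\mapsto \bar u$, which would indeed require a continuous (or at least stable) selection $f\mapsto g_r$, and you correctly note that exact best rank-$r$ approximations need not depend continuously on the target. The paper, however, interprets its own claim more narrowly: its proof only establishes (and explicitly says it establishes) continuity of the map $g_r\mapsto\bar u$, which is immediate from the boundedness of $S_R(\cB)$; the selection $f\mapsto g_r$ is left outside the statement (and is later assumed as given data in assumption {\bf (A1)}). Your proposed workaround via near-best selection and the compactness from Lemma \ref{lem:compact} is a reasonable direction, but as stated it is under-developed: a near-best (rather than exact) $g_r$ degrades both the approximation error and the triple-norm bound by a multiplicative factor $(1+\delta)$, which would \emph{enlarge} the constant $3+C_0$ rather than being ``absorbed'' into it, and a genuinely continuous near-best selection map still requires an argument (compactness of the balls alone does not produce one). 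None of this affects the validity of the main inequality \eqref{hatgamma}, which your proof establishes correctly.
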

\begin{proof}
 Suppose that  {$f\in \bar \cA^\gamma:=\bar\cA^\gamma(\HH^t,\HH^{t+\zeta})$}, so that, by \eref{consequences},  there exists, for
each $r\in\N$,  a $g_r =\sum_{k=1}^{r }g^{(k)}_r$, where $g^{(k)}_r$ is of the form \eref{rrt2},
such that
\beqn
\label{a-r}
\|f- g_r\|_{t} \leq \gamma(r)^{-1}\|f\|_{\bar\cA^\gamma},  
\eeqn
and  
\beqn
\label{grbound}
\|g_r\|_{t+\zeta}\leq  \tripnorm{g_r}_{r,t+\zeta} \leq \|f\|_{\bar\cA^\gamma} .  
\eeqn
From the definition of  $R=R(r,\gamma)$ and \eref{sterror},
\be
\label{defR1}
\|\mathfrak{B}^{-1}g_r - S_R(\mathfrak{B})g_r\|_{t+2}\leq \gamma(r)^{-1}\|g_r\|_{t+\zeta}.
\ee
 Now, we define
\beqn
\label{Teta-II}
 \bar u   := S_{R}(\mathfrak{B})g_r   .  
\eeqn
The bound \eref{defR} gives
\begin{eqnarray}
\label{ubar}
\| u - \bar u \|_{t+2}&=& \|\cB^{-1}(f- \cB \bar u )\|_{t+2} = \|f- \cB \bar u \|_{t}
\leq  \|f- g_r\|_{t} + \|g_r - \cB S_{R }(\mathfrak{B})g_r \|_{t}\nonumber\\
&=&  \|f- g_r\|_{t}  + \|(\cB^{-1}  - S_{R }(\mathfrak{B}))g_r \|_{t+2}\nonumber\\
&=&   \|f- g_r\|_{t} + \gamma(r)^{-1} \| g_r \|_{t+\zeta}\nonumber\\
  & \leq & 2\gamma(r)^{-1}\|f\|_{\bar \cA^\gamma(\HH^t,\HH^{t+\zeta})},  
  \end{eqnarray}
where we have used \eref{a-r} and \eref{grbound} in the last step.  

By construction, the rank of $\bar u$ is bounded by $rR$.
 Moreover, expanding
\be
\label{SRB}
\bar u =S_R(\mathfrak{B})g_r = \sum_{k=1}^R\sum_{\ell=1}^r u_{k,\ell},\quad \mbox{where}\quad u_{k,\ell}:= \omega_{R,k} {\rm e}^{-\alpha_{R,k}\mathfrak{B}}g_r^{(\ell)},
\ee
we have from (iv)  of Proposition \ref{thm3.1} \rd{that}
\be
\label{tripbound-0}
\|u_{k,\ell}\|_{t+2+\zeta} \leq   {(1+ C_0)^{1/2}}\|g_r^{(\ell)}\|_{t+\zeta},\quad k=1,\dots R,\ \ell=1,\dots,r.
\ee
Combining this with \eref{sDbound}, we obtain
\beqn
\label{tripbound}
\tripnorm{\bar u}_{rR,t+2+\zeta} \leq (1+C_0)\tripnorm{g_r}_{r,t+\zeta}\leq (1+C_0)\|f\|_{\bar\cA^\gamma(\HH^t,\HH^{t+\zeta})}.
\eeqn

  The two inequalities \eref{ubar} and \eref{tripbound} allow us to conclude that, for any positive integer $m\in[rR(r,\gamma),(r+1)R(r+1,\gamma))$,
$$
\hat\gamma(m)K_{m}(u,  \hat\gamma(m)^{-1},\HH^{t+2},\HH^{t+2+\zeta}) \leq \gamma(r)\|u-\bar u\|_{t+2} + \tripnorm{\bar u}_{Rr,t+2+\zeta}
\leq (3+C_0)\|f\|_{\bar\cA^\gamma(\HH^t,\HH^{t+\zeta})}.
$$
Since this inequality covers all values of $m\in \N_0$,
this means that
\beqn
\label{u-2}
u\in \bar\cA^{\hat\gamma}(\HH^{t+2}, \HH^{t+2+\zeta}), \quad \| u\|_{\bar\cA^\gamma(\HH^{t+2},\HH^{t+2+\zeta})}\leq  (3+C_0)\|f\|_{\bar\cA^\gamma(\HH^t,\HH^{t+\zeta})}.
\eeqn

The asserted continuity of the mapping that takes $g_r$
into $\bar u$   follows from the boundedness of $S_{R}(\mathfrak{B})$ as a mapping from $\HH^{t+\zeta}$ to $\HH^{t+2}$,
$0\leq  \zeta\leq 2$; c.f. Proposition \ref{thm3.1}.
 \end{proof}  

Again, Theorem \ref{thm:mainII} remains valid for $\zeta >2$ with an adjusted constant $C_0$, see Remark \ref{rem:remainvalid}.
{Note also that in contrast with the previous models (see Remark \ref{rem:curse}),  the assumption $f\in \bar\cA^\gamma(\HH^{t},\HH^{t+\zeta})$
does not entail increasingly stronger   constraints on the approximants, and hence on $f$, when $d$ grows.}

The loss in the decay rate of Theorem \ref{thm:mainII} is essentially the same as in Theorem \ref{r2th}.
 For example, any algebraic convergence rate $\gamma(r)=r^\alpha$
is preserved up to a logarithmic factor. Perhaps more interesting in this model is the case of very fast decay rates,
as illustrated by the following example.

\begin{example}
\label{ex:exp}
If $\gamma(r) = {\rm e}^{\alpha r}$ for some $\alpha >0$, \rd{then} one has
\be
\label{gammahat-2}
\hat \gamma(r) \geq  \gamma({(r/C)^{1/3}}) = {\rm e}^{(\alpha r/C)^{1/3}},
 \ee
where $C=C(\alpha,f,\zeta)$.  Thus, on the one hand, the convergence rate for $u\in \bar\cA^{\hat\gamma}(\HH^{t+2},\HH^{t+2+\zeta})$  is still faster than any algebraic rate; on the other hand, in relative terms, the loss of tensor-sparsity is the larger the stronger the sparsity of the data. This is plausible since
even when $f$ is a rank-one tensor, $u$ will generally have infinite rank.

The proof of  \eref{gammahat} follows from the fact that
$R(r,\gamma) \approx r^2$,
with constants of equivalence depending only on $\zeta$.   Hence,
$rR(r,\gamma)\approx r^3$ and the result easily follows.
\end{example}

\section{{Complexity and Computational Algorithms}}\label{sec:complexity}

While Theorem \ref{thm:mainII}  says that the solution $u$ to \eref{varprob1} can be well approximated
by sparse tensor sums with stable components,  whenever the right-hand side has a stable representation by sparse
tensor sums,
it does not  provide  an approximation  that is determined by a finite number of parameters nor does it offer
a numerical algorithm for computing a finitely parametrized approximation to $u$ that meets a prescribed target
accuracy.
Recall, for comparison,  that in low spatial dimensions,  classical numerical methods and their analysis show that the smoothness or regularity of the solution
to an elliptic problem determines the  complexity necessary to compute an approximation to any given target accuracy.

The central question addressed in this section is whether a regularity result like  Theorem \ref{thm:mainII}
can be translated into an analogous statement about computation even though we are now in a high-dimensional regime.   We address this question in two stages.
  We first show in \S \ref{ssec:rep-compl} that whenever $f$ is in one of the approximation classes $\bar\cA^\gamma(\HH^t,\HH^{t+\zeta})   $, for a $\gamma$ with at least power growth,  then the solution $u$ can be approximated in $\HH^t$ to within accuracy $\ve$ by a function that  is determined by $N(\ve,d)$ suitable parameters where
 \be
\label{poltract}
N(\ve,d) \leq d^{C_1} \ve^{-C_2},\quad \ve >0,\,\, d\in \N.
\ee
 In analogy with the terminology in {\em Information-Based Complexity}, we call such a result {\em representation tractability}.   In this sense Theorem \ref{thm:mainII} does establish a favorable relation between regularity and complexity.

Representation complexity bounds \rd{are, unfortunately, of} little practical relevance for computation, since they do  not provide in general a viable and implementable numerical algorithm.  In fact, the information used in deriving \eref{poltract} is the evaluation of exponential
maps ${\rm e}^{-\alpha\cB_j}$ applied to data approximations.  Unless one knows the eigenbases of the low-dimensional component operators $\cB_j$
this is a very restrictive assumption from a practical point of view, as mentioned earlier.  On the other hand,  the bounds can be viewed
as a {\em benchmark} for the performance of numerical schemes in   more realistic scenarios. If we are only allowed to query
the given data $f$ but cannot resort to an eigensystem  of $\cB$, as \rd{is the case in any} standard numerical framework for computing a solution to \eref{varprob1}, then  it is far less obvious how to arrive at a finitely parametrized approximation to $u$ and at what cost.
We refer to the corresponding computational cost as {\em numerical complexity}.  It addresses the computational complexity
of {\em approximately inverting} $\cB$ for data with certain structural properties.  This is   far less studied
in the context of complexity theory when $\cB$ cannot be diagonalized, although it is the central question in numerical computation.

We shall address  {numerical} complexity   in the remaining subsections of this section.
We emphasize already here that when treating computational complexity we assume that all relevant information
about the data $f$ is  available to us. In particular, we do {\em not} include the question of the cost
of providing approximations to $f$ in the desired format as described below. The morale of this
point of view is that ``data'' are part of the modeling process and are {\em given} by the user.
Even when the data take the simplest form, such as being a constant, conventional numerical tools would
render the solution of a high-dimensional diffusion problem, with certified accuracy in a relevant norm,
computationally intractable.  Our main contribution, in this direction, is therefore to show that, using
unconventional tools, as described in this paper, the problem is indeed numerically tractable at a computational cost not much higher than
\eref{poltract}.

\subsection{The right-hand side \texorpdfstring{$f$}{f}}
\label{ss:rs}
   We assume  that we are given an error tolerance $\ve>0$ that we wish to achieve with the numerical approximation.
Any numerical algorithm for solving \eref{varprob1}  begins with the input of the right-hand side $f$.  To exploit tensor sparsity, we need
that either $f$ is itself a rank $r$ tensor for some value of $r$ or it can be well approximated by a rank $r$ tensor.  Since the second case subsumes the first, we put ourselves into the second case.  Namely,  we know that for certain values of $\zeta$ and $\gamma$ we have 
 $f\in\bar\cA^\gamma(\HH^t,\HH^{t+\zeta})$.   We fix such  a value $\zeta\in (0,2)$ for the excess regularity of $f$.
As stressed earlier, we do not address the   problem of how one would create  a stable approximation to $f$, as is guaranteed by membership in this approximation class, but instead assume that
such an approximation to $f$ is  already given to us in this form.   We will comment later on the actual feasibility of such an assumption.

To avoid   additional technicalities, we will place very mild restrictions on the sequence $\gamma$.   We assume that $\gamma$ is strictly increasing  and has the following two properties:\\

\noindent
($\gamma1$): there exists a constant $\bar C$, depending on $\gamma$, such that
\be
\label{gamma1}
\gamma^{-1}(x)+1\le \gamma^{-1}(\bar C x),\quad x\geq 1,
\ee
where $\gamma^{-1}$ is the inverse function of $\gamma$, i.e., $\gamma^{-1}(\gamma(x))= x$.\\[3mm]
($\gamma2$): there exists a $\mu >0$ such that
\be
\label{gamma2}
x^\mu/\gamma(x) \leq C,\quad x\geq 1,
\ee
where $C$ is a constant.\\

Let us note that all polynomial growth sequences are admissible and  even
  sequences of the form $\gamma(r)={\rm e}^{cr^\beta}$, $c, \beta >0$ are included. Thus, these conditions are made
  only to exclude very fast and slow decaying sequences.  \rd{We note that the faster $\gamma$ increases the more stringent the first condition becomes}.   The second condition requires a minimum growth.
Of course, the type of tensor approximation discussed here is of primary interest when $\gamma$ increases rapidly, so \eref{gamma2} is not a serious restriction.

In summary, in the remainder of this paper, we make the following assumption:
\vskip .1in
\noindent
 {\bf (A1)}   {\it  We assume that we are given a sequence $\gamma$, satisfying {\rm ($\gamma1$)} and {\rm ($\gamma2$)},
a value $t\in\R$, and a value  $\zeta\in (0,2]$.  Whenever presented with an $r>0$ and  $f\in\bar\cA^\gamma(\HH^t,\HH^{t+\zeta})$,    we are given for free an approximation $g_r=\sum_{\ell=1}^r g^{(\ell)}$,
satisfying:
\be
\label{gr}
\gamma(r)\|f-g_r\|_{t}+ \tripnorm{g_r}_{r,t+\zeta} \le  A_1\|f\|_{\bar\cA^{\gamma}(\HH^t,\HH^{t+\zeta})},
\ee
with $A_1\ge 1$ an absolute constant.}
\vskip .1in
\noindent
Notice that the existence of such functions $g_r$ is guaranteed from the fact that $f\in\bar\cA^\gamma(\HH^t,\HH^{t+\zeta})$.

\subsection{Representation complexity}\label{ssec:rep-compl}

In this subsection, we prove that for any $\ve >0$,  the solution to \eref{varprob1} can be approximated to accuracy $\ve$  by   functions, which depend on a controllable number of parameters.
We fix any value of $t\in\R$ and $0<\zeta\le 2$ for this section.   In order to \rd{render} the presentation of the results of this subsection simple,
we will make an assumption on the \rd{growth} of the eigenvalues of $\cB$. This assumption will only be used in the present subsection and not in the following material, which introduces and analyzes numerical algorithms for solving \eref{varprob1}. The reader can easily verify  that
this assumption can be generalized in many ways but always at the expense of a more complicated statement of our results.
  \vskip .1in
  \noindent
 {\bf (RA)}   {\it  We assume that each of the the low-dimensional factor domains $D_j$ is a
 domain in $ \R^p$ with the same $p\in\{1,2,3\}$, and that the eigenvalues of $\cB$ satisfy, for some fixed $\beta>0$},
  \be
\label{lowlambda}
\lambda_{m}^* := \min_{j=1,\ldots,d}\lambda_{j,m} \geq \beta m^{2/p},\quad m\ge 1.
\ee
Classical results on eigenvalues for second order elliptic differential operators in low dimension establish {\bf (RA)} for a variety of settings,
see e.g. \cite{Grisvard,FS,FS:Arxiv}.  If the component operators
were of a different order, then the form of {\bf(RA)} would change but similar results could be obtained with obvious modifications.

  Since the following theorem is only of a theoretical flavor, we shall take $A_1=1$ in the assumption {\bf (A1)}.

\begin{theorem}
\label{thm:rep-compl}  Let $t\in\R$ and let $\zeta\in (0,2]$.
Assume that  {\bf (A1)} holds {\rm (}with constant $A_1=1${\rm )} for this $t$ and $\zeta$  and the assumption {\bf (RA)} also holds.
Then, for each $\ve >0$, there exists a function $v(\ve)$ with
\be
\label{uapprox}
v(\ve)\in \cT_{\bar r(\ve)}(\HH^{t+2+\zeta}), \quad \bar r(\ve)\leq 2 C_1(\zeta)\gamma^{-1}(4/\ve)C_1(\zeta)\big(\log(4C_0/\ve)\big)^2,
\ee
satisfying
\be
\label{approx-2}
\|u- v(\ve)\|_{t+2} \leq \ve \|f\|_{\bar\cA^\gamma(\HH^t,\HH^{t+\zeta})}
\ee
and
\be
\label{tripnorm1}
\tripnorm{v(\ve)}_{t+2+\zeta}\le (1+C_0)\|f\|_{\bar\cA^\gamma(\HH^t,\HH^{t+\zeta})}.
\ee
 Moreover, $v(\ve)$ is determined by
\be
\label{Ned}
N(\ve,d) \leq \bar B  d \ve^{-p/\zeta} \big(\gamma^{-1}(4/\ve)\big)^{1+p/\zeta}\big(\log(\bar C_0/\ve)\big)^{2+2p/\zeta}
\ee
parameters,
where the constant $\bar B$ depends on $\beta$ from {\rm \eref{lowlambda}}, on $C_0$ and $\zeta$ and where
$\bar C_0= 4C_0\bar C$  with $\bar C$ from {\rm \eref{gamma1}} and $C_0$ from Proposition \ref{thm3.1}.
\end{theorem}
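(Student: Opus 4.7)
The plan is to build $v(\varepsilon)$ in two nested approximation steps: first invoke Theorem \ref{thm:mainII} to obtain a rank-controlled, regularity-controlled approximation $\bar u$ to $u$ in $\HH^{t+2}$, then replace each rank-one summand of $\bar u$ by a finitely parametrized eigenfunction truncation, using the excess regularity $\zeta$ together with the eigenvalue growth guaranteed by \textbf{(RA)}.

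For the first step, I would set $r := \lceil\gamma^{-1}(4/\varepsilon)\rceil$ (invoking ($\gamma 1$) to keep track of the ceiling), take the data approximation $g_r$ supplied by \textbf{(A1)} with $A_1=1$, put $R := R(r,\gamma) = \lceil C_1(\zeta)(\log(C_0\gamma(r)))^2\rceil$, and form $\bar u := S_R(\cB)g_r$ exactly as in the proof of Theorem \ref{thm:mainII}. Estimate \eref{ubar} then yields $\|u-\bar u\|_{t+2}\le 2\gamma(r)^{-1}\|f\|_{\bar\cA^\gamma}\le \tfrac{\varepsilon}{2}\|f\|_{\bar\cA^\gamma}$. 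The expansion $\bar u = \sum_{k=1}^R\sum_{\ell=1}^r u_{k,\ell}$ with $u_{k,\ell}:=\omega_{R,k}\mathrm{e}^{-\alpha_{R,k}\cB}g_r^{(\ell)}$ from \eref{SRB} gives $\bar r := rR$ rank-one summands, each in $\HH^{t+2+\zeta}$ and satisfying $\|u_{k,\ell}\|_{t+2+\zeta}\le (1+C_0)^{1/2}\|g_r^{(\ell)}\|_{t+\zeta} \le (1+C_0)^{1/2}\|f\|_{\bar\cA^\gamma}$; this is precisely Proposition \ref{thm3.1}(iv) applied at base regularity $t+\zeta$ instead of $t$, which is permissible because $(t+2+\zeta)-(t+\zeta)=2$ still lies in the admissible interval.

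For the second step, I would approximate each rank-one tensor $u_{k,\ell}$ by its Fourier truncation $\tilde u_{k,\ell}$ in the product eigenbasis with $\bm=(m,\dots,m)$. Lemma \ref{soblemma} applied with $\delta=\zeta$ gives $\|u_{k,\ell}-\tilde u_{k,\ell}\|_{t+2}\le [\lambda_m^*]^{-\zeta/2}\|u_{k,\ell}\|_{t+2+\zeta}$, and under \textbf{(RA)} this is at most $C\beta^{-\zeta/2}m^{-\zeta/p}(1+C_0)^{1/2}\|f\|_{\bar\cA^\gamma}$. Setting $v(\varepsilon):=\sum_{k,\ell}\tilde u_{k,\ell}\in\cT_{\bar r}(\HH^{t+2+\zeta})$ and summing the $\bar r$ component errors by the triangle inequality, I would choose $m$ as the smallest integer for which $\bar r\, m^{-\zeta/p}$ is below a fixed multiple of $\varepsilon$, giving $m\sim(\bar r/\varepsilon)^{p/\zeta}$. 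Combined with the first-step error this produces \eref{approx-2}. Each $\tilde u_{k,\ell}$ is a product of $d$ univariate truncations in the eigenbases of $\cB_1,\dots,\cB_d$ and is therefore determined by $dm$ coefficients, so the total parameter count is $N = \bar r\cdot dm \lesssim d\,\varepsilon^{-p/\zeta}\bar r^{1+p/\zeta}$; substituting $\bar r\lesssim \gamma^{-1}(4/\varepsilon)(\log(C_0\gamma(r)))^2$ recovers exactly \eref{Ned}, the logarithmic exponent $2+2p/\zeta$ arising as $2(1+p/\zeta)$.

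The triple-norm bound \eref{tripnorm1} is immediate, because each eigenbasis truncation is an orthogonal projection in every $\HH^s$ and therefore does not increase $\|u_{k,\ell}\|_{t+2+\zeta}$; combined with the $(1+C_0)$-bound already used for \eref{tripbound} this yields the advertised factor. The main obstacle in the writeup is the coupling between the two error sources: the inner resolution $m$ must be chosen in terms of the outer rank $\bar r$, which itself depends on $\varepsilon$ through $\gamma^{-1}$ and a logarithmic factor, and keeping careful track of how those exponents combine is what drives the exponents $1+p/\zeta$ of $\gamma^{-1}(4/\varepsilon)$ and $2+2p/\zeta$ of the logarithm. Condition ($\gamma 1$) absorbs the ceilings introduced by integer choices of $r$ and $m$ (producing the constant $\bar C_0=4C_0\bar C$), while ($\gamma 2$) guarantees that $\bar r$ itself grows at most polynomially in $1/\varepsilon$, thereby securing representation tractability in the sense of \eref{poltract}.
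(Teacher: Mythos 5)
Your proposal is correct and follows essentially the same route as the paper's proof: choose $r=\lceil\gamma^{-1}(4/\ve)\rceil$, form $\bar u=S_R(\cB)g_r$, bound $\|u-\bar u\|_{t+2}$ as in \eref{ubar}, use Proposition \ref{thm3.1}(iv) for the rank-one summand bounds, then truncate each summand in the tensor eigenbasis via Lemma \ref{soblemma} and balance $m$ against $\bar r$ and $\ve$ under \textbf{(RA)}. The only cosmetic divergence is that you define the inner rank as $R(r,\gamma)$ whereas the paper works directly with $R(\ve)=\lceil C_1(\zeta)(\log(4C_0/\ve))^2\rceil$; these agree up to the constant absorbed by ($\gamma1$) (since $\gamma(r)\le 4\bar C/\ve$), which is precisely how the constant $\bar C_0=4C_0\bar C$ arises in \eref{Ned}, as you note.
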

\begin{proof} 
Given $\ve >0$,  we choose
$r=r(\ve)$ as the smallest integer such that $4\le \ve \gamma(r)$, which means that
 $$r(\ve) := \lceil \gamma^{-1}(4/\ve)\rceil.$$
  For this value of $r$ and this $\gamma$, we define $R$ by \eref{defR};
in other words, $R=R(\ve)$ is given by
$$R(\ve)= \big\lceil C_1(\zeta)\big(\log(4C_0/\ve)\big)^2\big\rceil.$$
We take $\bar u = S_R(\cB)g_r$ where $g_r$ is the approximation to $f$ asserted by {\bf (A1)}.
Arguing as in the derivation of  \eref{ubar}, we find that
\be
\label{know1}
\|u-\bar u\|_{t+2} \leq 2 \gamma(r)^{-1}\|f\|_{\bar\cA^\gamma(\HH^t,\HH^{t+\zeta})}\le \frac{\ve}{2} \|f\|_{\bar\cA^\gamma(\HH^t,\HH^{t+\zeta})}.
\ee
 From \eref{SRB}, we have  that
$$
\bar u(\ve)= \sum_{k=1}^{R(\ve)}\sum_{\ell=1}^{r(\ve)} u^{k,\ell}.
$$
Using the bounds \eref{tripbound-0} for  
the  rank-one terms
$$
u^{k,\ell}= \bigotimes_{j=1}^d u^{k,\ell}_j,
$$
one obtains, as in   \eref{tripbound}, that
\be
\label{tnorm}
\tripnorm{\bar u(\epsilon)}_{rR,t+2+\zeta} \leq (1+C_0)\|f\|_{\bar\cA^\gamma(\HH^t,\HH^{t+\zeta})}.
\ee
Now we invoke Lemma \ref{soblemma}, which provides for each $u_{k,\ell}$ a rank-one
approximation
$$
u^{k,\ell,m} = \bigotimes_{j=1}^d \Big(\sum_{\nu =1}^m \langle u^{k,\ell}_j,e_{j,\nu}\rr e_{j,\nu}\Big),
$$
satisfying
$$
\| u^{k,\ell} - u^{k,\ell,m}\|_{t+2}\leq (\lambda_{m+1}^*)^{-\zeta/2} \|u^{k,\ell}\|_{t+2+\zeta}\leq (1+ C_0)(\lambda_{m+1}^*)^{-\zeta/2}
\|f\|_{\bar\cA^\gamma(\HH^t,\HH^{t+\zeta})}.
$$
Hence, defining
$$
u_m:= \sum_{k=1}^{R(\ve)}\sum_{\ell=1}^{r(\ve)} u^{k,\ell,m},$$
we have that $u_m \in \cT_{rR}(\HH^{t+2+\zeta})$
and
satisfies
$$
\|u - u_m\|_{t+2} \leq \Big(\frac{\ve}{2} + (1+ C_0)(\lambda_{m+1}^*)^{-\zeta/2}r(\ve)R(\ve) \Big)
\|f\|_{\bar\cA^\gamma(\HH^t,\HH^{t+\zeta})}.
$$
We now choose $m=m(\ve)$, as the smallest integer, such that
\be
\label{smallenough}
(1+ C_0)(\lambda_{m+1}^*)^{-\zeta/2}r(\ve)R(\ve)\leq \frac{\ve}2.
\ee
If we define $v(\ve):=u_{m(\ve)}$, then the triangle inequality shows
that \eref{approx-2} holds.  The bound \eref{tripnorm1} follows from \eref{tnorm}
since each component $u^{k,\ell,m}$, from its very definition, has smaller $\HH^{t+2+\zeta}$ norm than that of
$u^{k,\ell}$.

We now check the complexity of $v(\ve)$.
By \eref{gamma1}, we have $r(\ve)= \lceil \gamma^{-1}(4/\ve)\rceil\leq \gamma^{-1}(\bar C 4/\ve)$ so that
\begin{eqnarray*}
r(\ve)R(\ve) &= &   \gamma^{-1}(4\bar C/\ve)  \, \big\lceil C_1(\zeta)\big(\log(4\bar C C_0/\ve))\big)^2\big\rceil . 
\end{eqnarray*}
Under the assumption \eref{lowlambda} the condition \eref{smallenough} is indeed satisfied provided that
\be
\label{m}
m(\ve)\ge B \ve^{-p/\zeta} \gamma^{-1}(4/\ve)^{p/\zeta}\big(\log (4\bar CC_0/\ve)\big)^{p/\zeta},
 \ee
where the constant $B$ depends on $\beta$ from \eref{lowlambda}, on $C_0,p$ and $\zeta$.
Since the number of parameters $N(\ve,d)$ needed to determine $v(\ve):= u_{m(\ve)}$ is at most
$r(\ve)R(\ve) m(\ve)d$ the assertion \eref{Ned} follows from the above choices of $r(\ve), R(\ve)$.
\end{proof}

The above reasoning reveals that under the assumption {\bf (A1)} the representation complexity
of $f$ can be bounded by $C d \ve^{-p/\zeta} \big(\gamma^{-1}(4/\ve)\big)^{1+p/\zeta}$. Thus, up to a logarithmic factor,
the  inversion of $\cB$ on tensor-sparse data does not worsen the representation complexity as already hinted at by Theorem \ref{r1th}.

\subsection{Numerical complexity}
 The remaining subsections of this paper are devoted to the construction of a numerical algorithm based on
Theorem \ref{thm:mainII}, using only queries of the approximate data $g_r$.
This will provide, in particular,  a bound on the numerical complexity of
the problem \eref{varprob-2}. The scheme we propose in \S\ref{thealgorithm} does {\em not} require any knowledge about the eigenbases of the operators $\cB_j$
but is based on the approximate application of
the operator $S_R(\cB)$ to $g_r$.

We shall always assume the validity of {\bf (A1)}.
In what follows let $t<0$ and let $0<\zeta \leq 2$ stand for some {\em excess regularity}.
Given a target accuracy $\ve >0$ we wish to formulate  a numerically implementable scheme \rd{that delivers} an approximation $\bar u(\ve)$ to the
solution $u$ of \eref{varprob-2}
of possibly low rank, satisfying
\be
\label{u-target}
\|u- \bar u(\ve)\|_{t+2} \le \ve \|f\|_{\bar\cA^\gamma(\HH^t,\HH^{t+\zeta})}.
\ee

The proof of Theorem \ref{thm:mainII} will serve as the main orientation for the construction of the numerical scheme, however, with
the approximate inverse $S_R(\cB)$ of $\cB$ replaced by the variant
  $\bar S_R(\cB)$ defined in
Lemma \ref{explemma1}.
 We will need the following counterpart to Proposition \ref{thm3.1},  which   follows by combining Lemma \ref{explemma1} with the arguments used in the proof of Proposition \ref{thm3.1}.
 
 \begin{proposition}
\label{prop:Sbar}
Fix any $\ua <\pi$. For any $s >t$, $s-t' < 2$, there exists a constant $\bar C_0$ depending on $\ulambda$, $s-t'$, and $\ua$
such that
\be
\label{barSapprox}
\|\cB^{-1}v - \bar S_r(\cB)v\|_s \leq \bar C_0 {\rm e}^{-\frac{(2-(s-t))\ua}{2}\sqrt{r}}\|v\|_{t'},\quad v\in \HH^{t'}.
\ee
Moreover, $\bar C_0=\bar C_0(s-t')$ tends to infinity as $s-t'\to 2$.
\end{proposition}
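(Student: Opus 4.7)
The plan is to follow the blueprint of the proof of Proposition \ref{thm3.1}(i), with the errors $\varepsilon_{r,\nu}:=\lambda_\nu^{-1}-S_r(\lambda_\nu)$ there replaced by
$\bar\varepsilon_{r,\nu}:=\lambda_\nu^{-1}-\bar S_r(\lambda_\nu)$, and then to quantify how the extra polynomial-in-$r$ term introduced by Lemma \ref{explemma1} forces us to degrade the decay constant from $\pi$ to $\underline a<\pi$. First, expanding $v=\sum_\nu \langle v,e_\nu\rangle e_\nu$ in the eigenbasis and arguing as in \eref{Hsest} yields
\[
\|\cB^{-1}v-\bar S_r(\cB)v\|_s \le \|v\|_{t'} \sup_{\nu\in\N^d}|\bar\varepsilon_{r,\nu}|\,\lambda_\nu^{(s-t')/2},
\]
so it suffices to control $\bar\varepsilon_{r,\nu}^2 \lambda_\nu^{s-t'}=\bar\varepsilon_{r,\nu}^2 \lambda_\nu^{2-\xi}$ with $\xi:=2-(s-t')\in(0,2]$.

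Next, I would split according to whether $\lambda_\nu$ is large or small relative to $T_r=\tfrac18 {\rm e}^{\pi\sqrt r}$, mirroring the case distinction in \eref{twocases}. For $\lambda_\nu\ge T_r$ part (iii) of Lemma \ref{explemma1} gives $\bar S_r(\lambda_\nu)\le 1/\lambda_\nu$, hence $0\le \bar\varepsilon_{r,\nu}\lambda_\nu\le 1$ and
\[
\bar\varepsilon_{r,\nu}^2\lambda_\nu^{2-\xi} \le \lambda_\nu^{-\xi}\le 8^\xi {\rm e}^{-\xi \pi\sqrt r}.
\]
For $\lambda_\nu\le T_r$ (and noting $\lambda_\nu\ge\underline\lambda$) part (ii) yields $|\bar\varepsilon_{r,\nu}|\le (16/\underline\lambda+8r{\rm e})\,{\rm e}^{-\pi\sqrt r}$, so $|\bar\varepsilon_{r,\nu}|\lambda_\nu\le 2/\underline\lambda+r{\rm e}$ and
\[
\bar\varepsilon_{r,\nu}^2\lambda_\nu^{2-\xi}= (|\bar\varepsilon_{r,\nu}|\lambda_\nu)^{2-\xi}|\bar\varepsilon_{r,\nu}|^\xi
\le C(\underline\lambda)\,(1+r)^2\,{\rm e}^{-\xi\pi\sqrt r}.
\]

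The main obstacle, absent in Proposition \ref{thm3.1}, is the presence of the factor $(1+r)^2$: it cannot be absorbed if one insists on the exponent $\pi$ in the final bound. Here I would use the elementary fact that for any $0<\underline a<\pi$ and any $\xi\in(0,2]$ there exists a constant $K(\underline a,\xi)$ with
\[
(1+r)^2\,{\rm e}^{-\xi\pi\sqrt r}\le K(\underline a,\xi)\,{\rm e}^{-\xi \underline a\sqrt r},\qquad r\ge 1,
\]
since ${\rm e}^{-\xi(\pi-\underline a)\sqrt r}$ beats any polynomial in $r$. Taking the square root of the supremum estimate and combining with the first displayed inequality gives the desired bound with constant $\bar C_0=\bar C_0(\underline\lambda,s-t',\underline a)$. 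Finally, because $K(\underline a,\xi)$ behaves like $\xi^{-4}(\pi-\underline a)^{-4}$ as $\xi\downarrow 0$, the constant $\bar C_0$ necessarily blows up as $s-t'\to 2$, which is the asserted degeneration.
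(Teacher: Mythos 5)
Your proof is correct and carries out exactly the combination that the paper leaves to the reader: the paper gives no detailed argument for Proposition \ref{prop:Sbar}, stating only that it ``follows by combining Lemma \ref{explemma1} with the arguments used in the proof of Proposition \ref{thm3.1}'', which is precisely the two-regime split at $\lambda_\nu = T_r$ followed by absorbing the $\mathcal{O}(r^2)$ factor (coming from the term $8r{\rm e}$ in \eref{expfa}) at the price of degrading $\pi$ to $\ua<\pi$. One small remark: the exponent in \eref{barSapprox} should almost certainly read $(2-(s-t'))$ rather than $(2-(s-t))$, consistent with $\|v\|_{t'}$ on the right-hand side and with the subsequent application for $s=t+2$, $t'=t+\zeta$; your proof works with $s-t'$ throughout and thereby tacitly corrects this typo.
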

\vskip .1in

 Now given any prescribed error tolerance $\ve$, we choose
\be
\label{repsilon}
r=r(\ve):= \lceil \gamma^{-1}(4A_1/\ve)\rceil,
\ee
where $A_1$ is the constant appearing in {\bf (A1)}.
 In other words, $r$ is  the smallest integer such that
\be
\label{chooser}
\gamma(r)\ge \frac{4A_1 
}{\ve}.
\ee
It follows from {\bf (A1)} that $g(\ve):=g_{r(\ve)}$ satisfies
\be
\label{follows2}
\|f-g(\ve)\|_{t}\le \frac{\ve}4\|f\|_{\bar\cA^\gamma}.
\ee
With an eye towards \eref{ubar} we want to choose   $R=R(\ve)$ such that
\be
\label{goal-barS}
\|(\cB^{-1}-\bar S_R(\cB))g_r\|_{t+2}\le  \frac{\ve}{4A_1}\|g_r\|_{t+\zeta}.
\ee
By Proposition \ref{prop:Sbar} and \eref{chooser},  this means that we want
$$
\bar C_0 {\rm e}^{-\frac{\ua \zeta}2\sqrt{R}}\leq (\gamma(r))^{-1}.
$$
A suitable choice is
\be
\label{Repsilon}
R(\ve):= \Big\lceil \bar C_1(\zeta)\big(\log\Big(\bar C_0 \gamma(r(\ve))\big)\Big)^2\Big\rceil, \quad \bar C_1(\zeta):= \frac{4}{(\ua \zeta)^2}.
\ee
Since, by \eref{gamma1}, $r(\ve) \leq \gamma^{-1}(4\bar C A_1/\ve)$ we conclude that
\be
\label{Repsilon2}
R(\ve)\leq \Big\lceil \bar C_1(\zeta)\Big(\log\big(\bar C_2/\ve))\big)\Big)^2\Big\rceil, \quad \bar C_2 := 4 \bar C_0 \bar C A_1.
\ee
Then, defining the function $  u(\epsilon):=\bar S_{R(\ve)}(\cB)g_{r(\ve)}$ and estimating as in    \eref{ubar} gives
\be
\label{initial1}
\| u -  u (\ve) \|_{t+2}\le   2A_1 \gamma(r(\ve))^{-1}\|f\|_{\bar \cA^\gamma(\HH^t,\HH^{t+\zeta})}\le \frac{\ve}2 \|f\|_{\bar\cA^\gamma}.
\ee

Thus, the main issue we are faced with is how to give a numerically realizable
approximation $\bar u(\ve)$ to
$u(\ve)=\bar S_{R(\ve)}(\cB)g_{r(\ve)}$, which satisfies
\be
\label{left1}
 \|\bar u(\ve) -u(\ve)\|_{t+2}\leq \frac{\ve}2 \|f\|_{\bar\cA^\gamma(\HH^t,\HH^{t+\zeta})}.
 \ee
 The following section describes a numerical algorithm for constructing such a  $\bar u(\epsilon)$.  We emphasize that although $\bar S_R(\cB)$ is a linear operator, in order to preserve ranks, its numerical approximation
will be a nonlinear process.

 \subsection{Numerical approximation of exponential operators}
 
 The main issue in our numerical algorithm is to provide a numerical realization of the application of the operator $\bar S_R(\cB)$ acting on  finite rank functions $g$, in our case $g=g_r = \sum_{\ell=1}^r g^{(\ell)}$.  Since
\be
\label{comp1}
\bar S_R(\cB) g=\sum_{\ell=1}^r \bar S_R(\cB) g^{(\ell)},
\ee
we need a numerical implementation of the application of the operator $\bar S_R(\cB)$ on the {\em rank-one tensors} $g^{(\ell)}$.  Given such a rank-one tensor, which we will denote by
$\tau$, we have
\be
\label{comp2}
\bar S_R(\cB) \tau =\sum_{k=1}^R \bar\omega_k {\rm e}^{-\alpha_k\cB}\tau,  
\ee
where, by Lemma \ref{explemma1},
\be
\label{omega-0}
\alpha_k=\alpha_{R,k},\quad \bar\omega_k := \left\{
\begin{array}{ll}
\omega_{R,k} &\mbox{when}\,\, \alpha_{R,k}\geq T_R^{-1};\\
0 & \mbox{otherwise}.
\end{array}\right.
\ee
Thus, the core task   is
  to approximate terms of the form
\beqn
\label{core}
{\rm e}^{-\alpha \cB} \tau = \bigotimes_{j=1}^d \Big({\rm e}^{-\alpha \cB_j}\tau_j\Big),\quad \mbox{where } \, \tau=\bigotimes_{j=1}^d\tau_j  
\eeqn
is one of the summands $g^{(\ell)}$ of $g_{r(\epsilon)}$.

In this section, we give
  a procedure for approximating ${\rm e}^{-\alpha  \cB_j}\tau_j$ and analyze its error.  This numerical procedure is based on
  the Dunford representation of the exponential
\beqn
\label{dunford}
{\rm e}^{-\alpha \cB_j}= 
 \frac{1}{2\pi i}\int_\Gamma {\rm e}^{-\alpha z}(z \mI-\cB_j)^{-1}{\dd}z,\quad \alpha > 0,
\eeqn
where $\mI$ is the identity operator (canonical injection of $H_j^{t+2}\to H_j^t$) and $\Gamma$ is a suitable curve in the right complex half-plane.  Recall that the Dunford representation \eqref{dunford} holds for sectorial operators, and, in  particular, for the
symmetric linear elliptic operators considered here.
    For simplicity of exposition we shall assume that for each $j=1,\ldots,d$,
we can take the same  $\Gamma=\Gamma_j$ and that this curve is  symmetric with respect to $\R$, see Figure \ref{fig.Gamma} for an illustration.
\begin{figure}[ht]
\centering
\includegraphics[scale=0.4]{./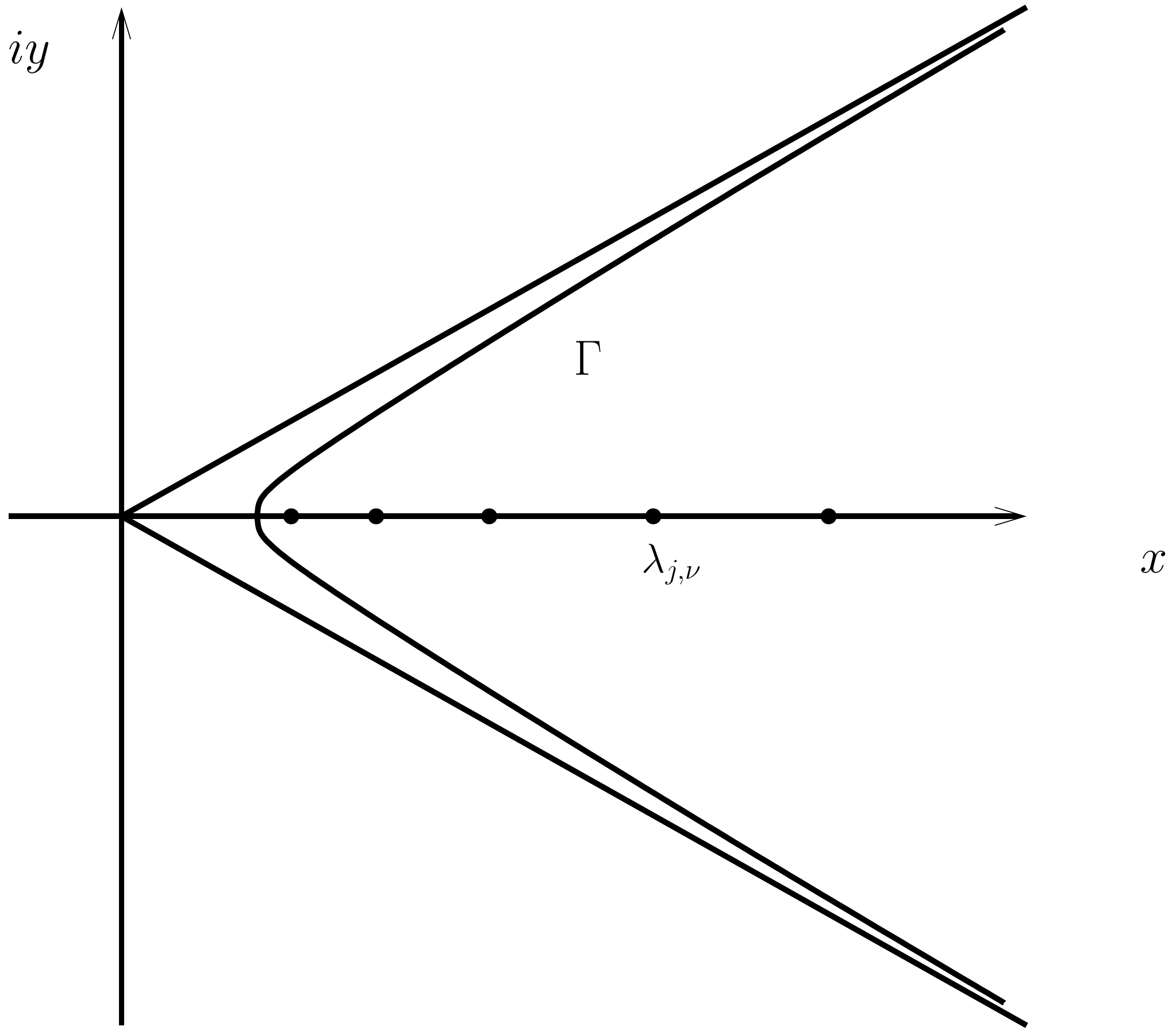}
\caption{The contour $\Gamma$\label{fig.Gamma}}
\end{figure}
The numerical realization of the operator exponential is done in two steps. The first one employs a quadrature for the above integral
with exact integrands. Since the integrands themselves are  solutions of operator equations the second step consists in
approximately solving these operator equations for the chosen quadrature points $z\in\Gamma$.

Quadrature methods for contour integrals of the above type are well studied, see e.g. \cite{Stenger,kress}. Here we follow
closely the results from \cite{J-diss,J-numath,DJ}, which are tailored to our present needs. Specifically, as  in \cite{J-diss}, we choose
for a fixed $\underline c\leq \ulambda/2$, the hyperbola
\be
\label{Gamma}
\Gamma(x):= \underline c + \cosh(x+ i \pi/6) = \underline c + \cosh(x)\cos(\pi/6) + i \sinh(x)\sin(\pi/6)
\ee
as a parametrization of the curve $\Gamma$,
which can be seen to have asymptotes $\underline c + t {\rm e}^{\pm i\pi/6}$. Denoting by $S_b\subset \C$ the symmetric strip
around the  real axis of width $2b$, $\Gamma$ extends to a holomorphic function
$$
\Gamma(z) = \underline c + \cosh(x+ i (\pi/6 + y)),\quad z= x+iy\in S_b
$$
for any $b>0$. Therefore, the operator-valued integrand (with clockwise orientation)
\be
\label{Fdef}
F_j(z,\alpha,\cdot) := -  {\frac{1}{2\pi i}} \sinh(z+i\pi/6) {\rm e}^{-\alpha \Gamma(z)}({\rnew \Gamma}(z)\mI -\cB_j)^{-1}
\ee
 is analytic in the strip $S_b$ provided that $\Gamma(S_b)$ does not intersect the spectrum of $\cB_j$.
As shown in \cite[\S 1.5.2]{J-diss}, this is ensured by choosing $b$ such that $\cos(\pi/6 -b)+\underline c < \ulambda$,
which we will assume  to hold  from now on.   Moreover, $\mathfrak{Re}(\Gamma(z))$ tends exponentially to
infinity as $\mathfrak{Re}(z)\to \infty$.

Under these premises we know that,  {for $t\in\R$},
\be
\label{res-bound}
\sup_{z\in\Gamma}\|(z\mI -\cB_j)^{-1}\|_{H_j^t \to H_j^{t+2}}\leq M,\quad j=1,\ldots, d,
\ee
for some constant $M>0$.

The following result, specialized to the present setting, has been shown in \cite[\S 1.5.4]{J-diss}, see also \cite{J-numath}.
 
\begin{theorem}
\label{thm:J-trapez}
Let {$\alpha>0$, $t\in \R$} and
\be
\label{betas}
\beta_0:= \frac{\alpha}{2}\cos(\pi/6),\quad \beta_1 := \frac{\alpha}{2}\cos(\pi/6 +b),\quad \beta_2:= \frac{\alpha}{2}\cos(\pi/6-b).
\ee
Define
\be
\label{constant}
C(\alpha):= \frac{M}{\pi} {\rm e}^{-\underline c \alpha}\Big(\frac{1}{\beta_0} + \frac{{\rm e}^2}{{\rm e}^2-1}\Big(\frac{1}{\beta_1}{\rm e}^{-\beta_1} +
\frac{1}{\beta_2}{\rm e}^{-\beta_2} \Big)\Big).
\ee
Then, for the operator
\be
\label{Q-def}
Q_N(F_j(\cdot,\alpha,\cdot)):= h \sum_{q=-N}^N F_j(qh,\alpha,\cdot),
\ee
 one has
\be
\label{quad-err}
\big\| {\rm e}^{-\alpha\cB_j}- Q_N(F_j(\cdot,\alpha,\cdot))\big\|_{H_j^t \to H_j^{t+2}}\leq C(\alpha){\rm e}^{-2\pi b/h},
\quad j=1,\ldots,d,
\ee
where
\be
\label{NQ}
N= N(h) := \max\left\{0, \left\lfloor\frac 1h \log (\beta_0)^{-1}\right\rfloor +1, \left\lfloor\frac 1h \log \Big(\frac{2\pi b}{\beta_0h}\Big) \right\rfloor +1
\right\}.
\ee
\end{theorem}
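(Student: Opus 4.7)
The proof naturally splits the total error into two pieces: the \emph{discretization error} of the infinite trapezoidal sum on $\R$, and the \emph{truncation error} incurred by dropping the terms with $|q|>N$. The plan is to treat each separately using the fact that the integrand is an operator-valued function that is analytic on the strip $S_b$ and whose norm decays super-exponentially as $\Rez(z)\to\infty$.

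First I would write the Dunford representation \eqref{dunford} after parametrizing $\Gamma$ via $z\mapsto \Gamma(z)=\underline c+\cosh(z+i\pi/6)$. Substituting $\dd\Gamma = \sinh(z+i\pi/6)\dd z$ into \eqref{dunford} and carrying the sign along with the clockwise orientation produces
\[
\textrm{e}^{-\alpha\cB_j} \;=\; \int_{-\infty}^{\infty} F_j(x,\alpha,\cdot)\,\dd x,
\]
so that the quadrature $Q_N(F_j(\cdot,\alpha,\cdot))$ is the truncated trapezoidal rule applied to this integral. The content of the theorem is therefore a sinc/trapezoidal quadrature estimate for an operator-valued integrand whose values lie in the Banach space $\mathcal{L}(H_j^t,H_j^{t+2})$.

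Next I would verify analyticity of $F_j(\cdot,\alpha,\cdot)$ on the strip $S_b$ as a function with values in $\mathcal{L}(H_j^t,H_j^{t+2})$. The hyperbolic geometry was chosen so that $\Gamma(S_b)$ avoids the spectrum $\sigma(\cB_j)\subset[\ulambda,\infty)$ (this is exactly the condition $\cos(\pi/6-b)+\underline c<\ulambda$), hence the resolvent factor extends analytically to the whole strip and the uniform bound \eqref{res-bound} is available. Combined with $|\sinh(z+i\pi/6)|\lesssim \textrm{e}^{|\Rez z|}$ and $|\textrm{e}^{-\alpha\Gamma(z)}| = \textrm{e}^{-\alpha \Rez \Gamma(z)}$, one sees that $\|F_j(\cdot,\alpha,\cdot)\|_{H_j^t\to H_j^{t+2}}$ is integrable along every horizontal line in $S_b$, with bounds controlled by $\beta_1,\beta_2$ on the two boundary lines $\Rez z=\pm b$.

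The discretization part is the main engine. I would invoke the standard Paley--Wiener / Poisson summation estimate for the trapezoidal rule: for a strip-analytic function $G$ with integrable Hardy-norm on $\partial S_b$,
\[
\Bigl\| \int_{\R} G\,\dd x - h\sum_{q\in\Z} G(qh)\Bigr\| \;\le\; \frac{\textrm{e}^{-2\pi b/h}}{1-\textrm{e}^{-2\pi b/h}}\bigl(\|G\|_{\partial S_b^+} + \|G\|_{\partial S_b^-}\bigr).
\]
Applying this to $F_j$ and bounding the Hardy norms on the two lines $\Rez z=\pm b$ using \eqref{res-bound} and the exponential decay factors $\textrm{e}^{-\beta_1},\textrm{e}^{-\beta_2}$ (with the prefactor $\textrm{e}^{-\underline c\alpha}$ pulled out) produces the second summand in $C(\alpha)$. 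The truncation part is then controlled by a tail estimate: on the real axis $\Rez\Gamma(x)=\underline c+\cos(\pi/6)\cosh(x)\ge \underline c + \beta_0 \textrm{e}^{|x|}/\alpha$ for $|x|\ge 1$, so $h\sum_{|q|>N}\|F_j(qh,\alpha,\cdot)\|$ is dominated by a geometric-like series with ratio $\textrm{e}^{-\beta_0\textrm{e}^{Nh}}$; the two choices inside the $\max$ in \eqref{NQ} are exactly what is needed to guarantee that this tail is at most $\textrm{e}^{-2\pi b/h}$ times the $1/\beta_0$ contribution to $C(\alpha)$.

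The main obstacle is bookkeeping: one must keep the operator-valued nature explicit throughout (so that the bound is in the operator norm $H_j^t\to H_j^{t+2}$ rather than a scalar norm), and one must tune $N(h)$ so that the truncation tail matches, rather than dominates, the strip-discretization error $\textrm{e}^{-2\pi b/h}$. Once that balance is achieved and the three constants $1/\beta_0,\,1/\beta_1,\,1/\beta_2$ are collected into $C(\alpha)$ via the triangle inequality, the bound \eqref{quad-err} follows. Everything else is routine estimation of $|\sinh|$ and $\Rez\Gamma$ along the relevant contours.
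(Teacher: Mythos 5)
The paper does not actually give a proof of Theorem~\ref{thm:J-trapez}: it explicitly defers to J\"urgens \cite[\S 1.5.4]{J-diss} (see also \cite{J-numath}). So there is no internal proof to compare against; one can only judge the proposal on its own merits and against the standard sinc-quadrature framework that the cited source uses.

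Your outline is correct and is exactly the standard route. You split the error into (a) the discretization error of the full trapezoidal rule on $\R$ for the strip-analytic operator-valued integrand $F_j(\cdot,\alpha,\cdot)$, and (b) the truncation error from dropping the terms with $|q|>N$. For (a) you correctly invoke the Hardy-space/Poisson-summation estimate $\frac{{\rm e}^{-2\pi b/h}}{1-{\rm e}^{-2\pi b/h}}$ times the sum of Hardy norms; those Hardy norms on the boundary lines $\{\Im z = \pm b\}$ (note: these are lines of constant \emph{imaginary} part, not $\Rez z = \pm b$ as you wrote, a small slip) are controlled by the uniform resolvent bound \eqref{res-bound} together with $\Rez\Gamma(x\pm ib)\geq\underline c + \beta_{1,2}\,{\rm e}^{|x|}/\alpha\cdot\alpha$, and integrating $|\sinh|\,{\rm e}^{-\beta_{1,2}{\rm e}^{|x|}}$ yields the $\beta_1^{-1}{\rm e}^{-\beta_1}$ and $\beta_2^{-1}{\rm e}^{-\beta_2}$ contributions; the prefactor ${\rm e}^{-\underline c\alpha}$ factors out, and the geometric factor $\frac{1}{1-{\rm e}^{-2\pi b/h}}$ is majorized by $\frac{{\rm e}^2}{{\rm e}^2-1}$ once $N(h)\geq 1$ forces $h$ small enough. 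For (b) your observation that $\Rez\Gamma(x)\geq\underline c + (\beta_0/\alpha){\rm e}^{|x|}$ on the real axis gives a super-exponential tail whose sum is dominated (after comparison with the integral $\int_{{\rm e}^{Nh}}^{\infty}{\rm e}^{-\beta_0 u}\,{\rm d}u$) by $\beta_0^{-1}{\rm e}^{-\beta_0{\rm e}^{Nh}}$, and the two cases in the $\max$ defining $N(h)$ in \eqref{NQ} are precisely what is required to make $\beta_0{\rm e}^{Nh}\geq 2\pi b/h$, i.e.\ to keep this tail bounded by ${\rm e}^{-2\pi b/h}$. Collecting the three pieces and pulling out $\frac{M}{\pi}{\rm e}^{-\underline c\alpha}$ (the $\frac{1}{\pi}$ coming from the $\frac{1}{2\pi}$ in the Dunford integral together with the two boundary lines) gives exactly the constant $C(\alpha)$ in \eqref{constant}. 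So the plan is sound and the balancing of $N(h)$ against $h$ is correctly identified as the crux; the remaining work is the calculation of the Hardy-norm integrals and the tail sum, which you correctly characterize as routine.
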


Notice that there exist positive constants $C_1, c_1$, depending only on $\ulambda$ and  $\underline c$ in \eref{Gamma} such that
\be
\label{Calpha}
C(\alpha) \leq \frac{C_1}{\alpha} {\rm e}^{-c_1\alpha} =: \bar C(\alpha).
\ee
We also   remark that one has, for $N,h$ related
by \eref{NQ},
\be
\label{hN}
h(2N+1)\leq C_3 |\log (\alpha h)|,
\ee
where $C_3$ depends only on $\ulambda$ and $\underline c$.

We   will use the above bound also for the smaller quantities
$$
\big\| {\rm e}^{-\alpha\cB_j}- Q_N(F_j(\cdot,\alpha,\cdot))\big\|_{H_j^{t'} \to H_j^s},$$
 when $0\leq s-t' \leq 2$.

Since  $Q_N(F_j(\cdot,\alpha,\cdot))$
is an approximation to ${\rm e}^{-\alpha\cB_j}$,  it is natural to approximate ${\rm e}^{-\alpha\cB}\tau$ by $\bigotimes_{j=1}^d Q_N(F_j(\cdot,\alpha,\cdot))\tau$.
The factors  $Q_N(F_j(\cdot,\alpha,\cdot))$ in the approximation to ${\rm e}^{-\alpha\cB}\tau$ are, of course,  not yet computable
because the quantities $u_{j,q}(\alpha,\tau_j) :=  F_j(qh,\alpha,\tau_j)$ are the {\em exact} solutions of
  the resolvent equations
\be
\label{resolvent}
(\Gamma(qh)\mI-\cB_j)u_{j,q}(\alpha,\tau_j) = -\frac{1}{2\pi}\sinh(qh+i\pi/6)\,{\rm e}^{-\alpha\Gamma(qh)}\tau_j
\ee
for $q=-N,\ldots,N$. Hence our numerical procedure will be based on {\em approximate} solutions
to \eref{resolvent}.
\begin{definition}
\label{def:accurate}
Let  $0\leq s$,  $s-t' <2$. An approximate solution $\bar u_{j,q}(\alpha,\tau_j)$ of \eref{resolvent} is called $(s,t')$-accurate for $\alpha$ if
\be
\label{baru}
\|u_{j,q}(\alpha,\tau_j) - \bar u_{j,q}(\alpha,\tau_j)\|_{H^{s}_j}\leq (C_3 |\log (\alpha h)|)^{-1}C(\alpha){\rm e}^{-2\pi b/h}\|\tau_j\|_{H_j^{t'}},
\ee
where $C_3$ is the constant from \eref{hN}.
\end{definition}

 Note that here we need $s-t'<2$ to be able to achieve a desired accuracy.
We postpone at this point the discussion of how and at what cost one can obtain such approximations.

Given the approximations $\bar u_{j,q}(\alpha,\tau_j)$ of the exact integrand $u_{j,q}(\alpha,\tau_j)=F_j(q\pi,\alpha,\tau_j)$, we define
the computable approximations
\be
\label{Ej}
E_j(\tau_j,\alpha,h):= h\sum_{q=-N}^N \bar u_{j,q}(\alpha,\tau_j),
\ee
to $Q_{N}(F_j(\cdot,\alpha,\tau_j))$ where $N=N(h)$, as well as
 \be
\label{Edef}
E(\tau,\alpha,h):= \bigotimes_{j=1}^d E_j(\tau_j,\alpha,h),
\ee
as a numerically realizable approximation to the rank-one tensor ${\rm e}^{-\alpha\cB}\tau$.

\subsection{The numerical approximation of \texorpdfstring{$u=\cB^{-1}f$}{u=inv(B) f}}
\label{numericalBinverse}

We are now in a position to specify our numerical approximation $u=\cB^{-1}f$.    Recall that
$u(\ve)= \bar S_{R(\ve)}(\cB)g_{r(\ve)}$, with the choice of $R(\ve)$ and $r(\ve)$ specified in \eref{Repsilon} and \eref{repsilon} respectively, satisfies
\be
\label{initial11}
\| u -  u (\ve) \|_{t+2}\le     \frac{\ve}2 \|f\|_{\bar\cA^\gamma(\HH^t,\HH^{t+\zeta})},
\ee
 because of \eref{initial1}.  We will now use as our approximation to $u$ the following computable quantity
\be
\label{uepsilon-0}
  \bar S_{R(\ve),h}(\cB)(g_{r(\ve)}) := \sum_{\ell=1}^{r(\ve)} \sum_{k=1}^{R(\ve)} \omega_{R(\ve),k} E(g^{(\ell)},\alpha_{R(\ve),k},h).
\ee
 It remains to specify $h$ sufficiently small, so as to  achieve the error bound
\be
\label{eb1}
\big\|u(\ve)-S_{R(\ve),h}(\cB)(g_{r(\ve)})\big\|_{t+2}\le  \frac{\ve}2 \|f\|_{\bar \cA^\gamma(\HH^t,\HH^{t+\zeta})}.
\ee
The following result whose proof is deferred to \S \ref{sec:proofs} provides a sufficient choice of $h=h(\ve)$.
 
 \begin{proposition}
\label{prop:SRBepsilon}
Assume that $t+\zeta \ge 0$ and $s- (t+\zeta)<  2$ and let  $u(\ve)$ be the approximation from {\rm \eref{initial1}}.
Then there exists a constant $c_6$,
depending only on $\zeta,\bar C, \bar C_2$,  defined in {\rm \eref{Repsilon2}},  $A_1$ from {\bf (A1)},  on  $\bar C_0$ {\rm(}hence depending on $s-t'>0${\rm)},
$\ulambda, \Gamma$,
as well as on the minimum growth of $\gamma$  in {\rm \eref{gamma2} }such
that for
\be
\label{condh2}
h=h(\ve) := c_6 \Big(\log\Big(\frac{ {d}}{\ve}\Big)\Big)^{-1}
 \ee
 the function
\be
\label{uepsilon}
\bar u(\ve):= S_{R(\ve),h(\ve)}(\cB)(g_{r(\ve)}),
\ee
given by {\rm \eref{uepsilon-0}} for $h=h(\ve)$, satisfies
\be
\label{errorh}
\|\bar u(\ve)- u(\ve)\|_{s} \leq \frac{\ve}{2A_1}\|g_{r(\ve)}\|_{t+\zeta}\le \frac{\ve}{2}\|f \|_{\bar A^\gamma(\HH^t,\HH^{t+\zeta})},
\ee
 provided that for
$j=1,\ldots,d$, $\ell=1,\ldots,r(\ve)$ and $q=-N(h(\ve)),\ldots,N(h(\ve))$, the approximate solutions $\bar u_{j,q}(\alpha_{R(\ve),k},g^{(\ell)}_j)$, entering \eref{Ej} are $(s,t+\zeta)$-  and $(0,0)$-accurate for $\alpha_{R(\ve),k}, k=1,\ldots,R(\ve)$.
 \end{proposition}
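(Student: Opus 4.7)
The plan is to split the error as
$$\bar u(\ve) - u(\ve) = \sum_{\ell=1}^{r(\ve)}\sum_{k=1}^{R(\ve)} \omega_{R(\ve),k}\bigl[E(g^{(\ell)},\alpha_{R(\ve),k},h)-\mathrm{e}^{-\alpha_{R(\ve),k}\cB}g^{(\ell)}\bigr]$$
and to control, for each fixed $(k,\ell)$, the rank-one discrepancy $E(\tau,\alpha,h)-\mathrm{e}^{-\alpha\cB}\tau$ with $\tau=g^{(\ell)}=\bigotimes_j\tau_j$ by a standard tensor telescoping. Writing $F_j:=\mathrm{e}^{-\alpha\cB_j}\tau_j$ and $\Delta_j:=E_j(\tau_j,\alpha,h)-F_j$, the identity
$$\bigotimes_{j=1}^dE_j-\bigotimes_{j=1}^dF_j=\sum_{j=1}^d\Bigl(\bigotimes_{i<j}E_i\Bigr)\otimes\Delta_j\otimes\Bigl(\bigotimes_{i>j}F_i\Bigr)$$
reduces the $d$-dimensional question to a sum of $d$ rank-one pieces, each with exactly one error slot $\Delta_j$.

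The next step is to bound $\Delta_j$ in the two low-dimensional norms needed downstream. Decomposing $\Delta_j$ into (quadrature error) plus (solver error), Theorem \ref{thm:J-trapez} together with \eqref{hN} and Definition \ref{def:accurate} give
$$\|\Delta_j\|_{H_j^s}\le 2C(\alpha)\mathrm{e}^{-2\pi b/h}\|\tau_j\|_{H_j^{t+\zeta}},\qquad \|\Delta_j\|_{L_2(D_j)}\le 2C(\alpha)\mathrm{e}^{-2\pi b/h}\|\tau_j\|_{L_2(D_j)},$$
the second inequality being precisely the reason the $(0,0)$-accuracy hypothesis is imposed. Plugging these into the upper bound of Lemma \ref{lem:H-1} for each telescoping summand produces $d$ sub-terms indexed by which slot carries the $H^s$-factor: the one placing $H^s$ on $\Delta_j$ uses the first bound, while the remaining $d-1$ sub-terms couple $\|\Delta_j\|_{L_2(D_j)}$ to either $\|F_i\|_{H_i^s}$ or $\|E_i\|_{H_i^s}$. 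The $L_2$-factors of $E_i$ and $F_i$ are bounded by contractivity of $\mathrm{e}^{-\alpha\cB_i}$ on $L_2$ plus the small $(0,0)$-error, while the $H^s$-factors are handled by the spectral inequality $\sup_\lambda\lambda^{(s-(t+\zeta))/2}\mathrm{e}^{-\alpha\lambda}\le C\alpha^{-(s-(t+\zeta))/2}$ (as in the proof of Proposition \ref{thm3.1}(iv)) combined with the $(s,t+\zeta)$-bound on the corresponding $\Delta_i$. Reassembling the products of low-dimensional norms via Lemma \ref{lem:H-1} (or Corollary \ref{cor:duality} when $t+\zeta<0$), one obtains, for any $\alpha\ge T_{R(\ve)}^{-1}$,
$$\|E(\tau,\alpha,h)-\mathrm{e}^{-\alpha\cB}\tau\|_s\le P(d,\alpha)\,\mathrm{e}^{-2\pi b/h}\,\|\tau\|_{t+\zeta},$$
with a prefactor $P(d,\alpha)$ that is polynomial in $d$ and in $1/\alpha$.

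The third step is to sum over $k$ and $\ell$. The per-mode identity $\omega_{R(\ve),k}\lambda\mathrm{e}^{-\alpha_{R(\ve),k}\lambda}\le 1+2/\ulambda$ (as employed in \eqref{used}, which carries over to $\bar S_R$ since the removed summands are nonnegative), the lower bound $\alpha_{R(\ve),k}\ge T_{R(\ve)}^{-1}$ enforced by Lemma \ref{explemma1}, and the explicit form \eqref{Calpha} of $C(\alpha)$ together bound $\sum_k\omega_{R(\ve),k}C(\alpha_{R(\ve),k})$ by a power of $T_{R(\ve)}$, hence by a polynomial in $1/\ve$ via \eqref{Repsilon2}. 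The $\ell$-sum contributes a factor $r(\ve)$ and, using $\|g_{r(\ve)}\|_{t+\zeta}\le\tripnorm{g_{r(\ve)}}_{r(\ve),t+\zeta}\le A_1\|f\|_{\bar\cA^\gamma}$ from {\bf(A1)}, reassembles the product of low-dimensional norms into $\|g_{r(\ve)}\|_{t+\zeta}$. The resulting bound has the shape
$$\|\bar u(\ve)-u(\ve)\|_s\le\tilde P(d,\ve)\,\mathrm{e}^{-2\pi b/h}\,\|g_{r(\ve)}\|_{t+\zeta},$$
where, using \eqref{repsilon}, \eqref{Repsilon2} and the minimum growth ($\gamma 2$) of $\gamma$, $\tilde P(d,\ve)\le Cd^{a_1}\ve^{-a_2}(\log(1/\ve))^{a_3}$ with fixed exponents depending only on the quantities listed in the statement. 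Choosing $h(\ve)=c_6(\log(d/\ve))^{-1}$ with $c_6$ sufficiently large then forces $\mathrm{e}^{-2\pi b/h}\le(\ve/d)^K$ for any prescribed $K$, which absorbs $\tilde P(d,\ve)$ and delivers the required $\ve/(2A_1)$ bound.

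The main obstacle I anticipate is the careful bookkeeping in the telescoping step: the cross-sub-terms that place $\Delta_j$ in the $L_2$-slot couple $\|\Delta_j\|_{L_2(D_j)}$ to $\|E_i\|_{H_i^s}$ or $\|F_i\|_{H_i^s}$, which can a priori grow like $\alpha^{-(s-(t+\zeta))/2}$; showing that this is harmless requires combining the lower bound $\alpha\ge T_{R(\ve)}^{-1}$ with the exponential decay factor $\mathrm{e}^{-c_1\alpha}$ hidden in $C(\alpha)$ via \eqref{Calpha}, and then verifying that after the $k$-sum the net multiplicative constant remains at most polynomial in $d$ and $1/\ve$. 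A secondary but recurrent subtlety is that the single value $h=h(\ve)$ must handle all $\alpha_{R(\ve),k}$ simultaneously as well as all quadrature nodes $q\in\{-N,\dots,N\}$, which is the reason the logarithm in \eqref{condh2} involves $d/\ve$ rather than $1/\ve$ alone.
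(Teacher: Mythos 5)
Your proposal follows essentially the same route as the paper: the paper packages your telescoping estimate on $E(\tau,\alpha,h)-\mathrm{e}^{-\alpha\cB}\tau$ into Lemma~\ref{lem:fullB} and Corollary~\ref{cor:fromnowon} (using precisely the $H^s$/$L_2$ bookkeeping via Lemma~\ref{lem:H-1}, the need for simultaneous $(s,t+\zeta)$- and $(0,0)$-accuracy, and the spectral bound of Lemma~\ref{lem:semigroupbound}), and then runs the same ``sum over $k,\ell$, bound the weighted exponential sum, use $\alpha_{R,k}\geq T_{R(\ve)}^{-1}$, use $(\gamma 2)$, and finally absorb a polynomial prefactor by choosing $c_6$'' argument. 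So the structure matches the paper's proof closely; you have merely inlined the key technical lemma rather than isolating it.

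There is one concrete error at the end. You write that choosing $c_6$ \emph{sufficiently large} forces $\mathrm{e}^{-2\pi b/h}\le(\ve/d)^K$. Since $1/h = (1/c_6)\log(d/\ve)$, one has $\mathrm{e}^{-2\pi b/h}=(\ve/d)^{2\pi b/c_6}$, so the exponent $2\pi b/c_6$ increases as $c_6$ \emph{decreases}; the absorption and the admissibility threshold \eref{hcond1} (which also requires $h$ small, hence $c_6$ small) both demand $c_6$ sufficiently \emph{small}, which is what the paper does. A secondary, milder inaccuracy is your heuristic for why $d/\ve$, rather than $1/\ve$, appears inside the logarithm in \eref{condh2}: the $d$-dependence stems from the polynomial-in-$d$ prefactors that arise from the telescoping (the $d^2(d^{\max\{0,s-1\}})^{1/2}$ in \eref{fullBest-2}) and from the $d$ appearing in the threshold \eref{hcond1}, not from the fact that a single $h$ must serve all quadrature nodes $q$ (the nodes do not contribute a $d$-factor). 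Neither point invalidates the argument once corrected, but the ``$c_6$ large'' direction as written would fail.
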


\section{A rate distortion theorem}  \label{sec:discr}
   Recall that    the spatial dimension of the factor
 domains $D_j$ is $p\in\{1,2,3\}$ and so a key task is to approximately solve low-dimensional elliptic problems, see \eref{resolvent}.
The size of the discretizations of the low-dimensional operator equations \eref{resolvent}, required in order to realize a desired target accuracy, depends on the regularity of the solution.  In the scale of spaces $H^t_j$ and $\HH^t$, $t\in \R$,  this   regularity follows, in view of \eref{isometry}
and analogous relations for the low-dimensional component operators, from  our assumption on the right-hand side $f$ stated in  assumption {\bf A1}.
This gives a     control on the rank-one terms $g^{(\ell)}_r$ in the $\|\cdot\|_{t+\zeta}$ norm  which, in turn, implies a regularity of $u$, see
\eref{tripnorm1}.  
Suitable approximations in terms of the eigensystems of the component operators $\cB_j$ could then be derived from Lemma \ref{soblemma}.
However, if one wants to dispense with using the eigensystems and, as we will do now,  employ instead standard numerical techniques for low-dimensional operator equations, then one needs more information about the spaces $\HH^t, H^t_j$.
 
  Since our goal is only to illustrate that, even in the absence of having an eigensystem at hand,
   it is possible to construct numerical algorithms that exhibit
 significant  computational savings when utilizing tensor structure, we shall, for the remainder of  \S \ref{sec:discr},  place ourselves in
 a more specific setting where  known standard finite element solvers can be employed and bounds for their numerical efficiency are available.
 Specifically, we shall limit ourselves to  approximating the solution in the energy norm $\HH^1$.   One issue to overcome is that this   is not a cross norm
 and hence is not as compliant with tensor structures as the $\LL_2$ norm.    So, we are interested in approximating the solution $u$
  in the $\HH^{1}$ norm, given $\HH^{-1+\zeta}$-data. Certain restrictions on $\zeta$ will be identified below.
  \vskip .1in
 \noindent
 {\bf Assumptions for the rate distortion theorem:} {\it
   We assume the validity of {\bf (A1)} and that $f\in \bar\cA^\gamma(\HH^{-1},\HH^{-1+\zeta})$. In addition, we make the following assumptions: }

  \vskip .1in
 \noindent
{\bf (A2):}
{\em \rd{For $s \in [0,2]$,} the  spaces $H^s_j$, $j=1,\ldots,d$,
 agree with classical Sobolev spaces $H^s(D_j)$
with equivalent norms.
 Hence, the same holds for the spaces $\HH^s$, $s\in [0,2]$, i.e.,  there exist \rd{positive} constants $c^*, C^*$ such
that
\beqn
\label{isoext}
c^*\|\cB v\|_{{s-2} }\leq \| v\|_{H^s(D)} \leq C^*\|\cB v\|_{s-2}, \quad v\in H^s(D),\quad 0\leq s \leq   2.
\eeqn
{\bf (A3):} The factor domains $D_j$ have the same``small'' dimension $p\in \{1,2,3\}$. The operators $\cB_j$ are symmetric $H_j^1$-elliptic
operators {\rm (}see \eref{Di-ell}{\rm)} so that whenever $z\in \C$ satisfies  $\min\,\{|z-\lambda_{j,k}|:k\in\N\}> c\, \mathfrak{Re}(z)$, the problem
\be
\label{ldproblem}
(z\mI -\cB_j)v=w
\ee
possesses, for $j=1,\ldots,d$, and any $w\in H^{-1}_j$ a unique solution $v$. In particular, by {\rm \eref{res-bound}},   for $w\in H_j^{-1+\zeta}$
and $z\in \Gamma$,
  one has $\|v\|_{H^{1+\zeta}_j}\leq M \|w\|_{H^{-1+\zeta}_j}$, $j=1,\ldots,d$.

  Moreover,    we have at our disposal a discretization method using linear spaces $V_{j,\delta}$ with the following properties: For any target
accuracy $\delta >0$, to   obtain an approximate solution $v_\delta \in V_{j,\delta}\subset H^1_j(D_j)$
satisfying
\be
\label{target-sol}
\|v-v_\delta\|_{H_j^{1 }}\leq \delta \|w\|_{H_j^{-1+\zeta}},
\ee
requires a trial space of dimension at most
\beqn
\label{Vjdelta}
n_\delta := {\rm dim}\, V_{j,\delta} \leq C_8  
\delta^{-p/\zeta  }, \quad \delta >0,
\eeqn
and the number of operations ${\bf flops}(v_\delta)$, required to compute $v_\delta$, is bounded by
\beqn
\label{flops}
{\bf flops}(v_\delta) \leq C_9 \delta^{-p/\zeta}, \quad \delta >0,
\eeqn
with $ C_8,C_9$ independent of $j=1,\ldots,d$.}\\

Clearly, under the assumption  {\bf (A2)}, the hypotheses in {\bf (A3)} are well justified, simply resorting to
available computational techniques in low spatial dimensions.

As for the justification of {\bf (A2)}, we state the following facts.

\begin{remark}
\label{regularity+}
Suppose, for example, that $\mathfrak{B}_j$ is a second-order strongly elliptic operator of the form
\[\mathfrak{B}_j v := \sum_{k,m=1}^p \frac{\partial}{\partial x_{j_k}}\left(a_{j_k j_m}(x_{j_1},\dots, x_{j_p})\frac{\partial v}{\partial x_{j_m}}\right),\qquad j=1,\dots,d,\]
with $a_{j_k j_m} = a_{j_m j_k} \in C^{0,1}(\overline{D_j})$
for $k,m=1,\dots,p$, where $D_j$ is a bounded open convex domain in $\mathbb{R}^p$, $p \in \{1,2,3\}$, $j=1,\dots, d$. Hence, $D = \times_{j=1}^d D_j$ is
a bounded open convex domain (cf.\ \cite{HUL}, p.~23) and $\mathfrak{B}$ is a second-order strongly elliptic operator on $D$. Assuming a homogeneous Dirichlet boundary condition on $\partial D$, the second inequality in \eqref{isoext} holds with $s=s^*=2$ thanks to Theorem 3.2.1.2 in Grisvard \cite{Grisvard} and the equivalence of the $H^2(D)$ norm on $H^2(D)\cap \HH^1_0(D)$ with the standard Sobolev norm of $H^2(D)$. The first inequality follows trivially, by noting the regularity hypothesis on the coefficients $a_{j_k,j_m}$ and the equivalence of the $\HH^2$ norm with the standard Sobolev norm of $H^2(D)$ on $H^2(D)\cap H^1_0(D)$. For $s \in (1,2)$, $s \neq 3/2$, the pair of inequalities
\eqref{isoext} is deduced by function space interpolation. For elliptic regularity results of the kind \eqref{isoext} with $s=s^*=2$ for second-order degenerate elliptic operators appearing in Fokker--Planck equations we refer to \cite{FS,FS:Arxiv}.
\end{remark}

  \subsection{The numerical algorithm}
 \label{thealgorithm} We shall now specify the above construction of the finitely parametrized finite rank approximation
 $\bar u(\ve)= S_{R(\ve),h(\ve)}(\cB)(g_{r(\ve)})$ from \eref{uepsilon}
in the following scheme.\\

\noindent
{\bf Scheme-Exp:} Given $\ve >0$,  $g(\ve)=g_{r(\ve)}\in \cT_{r(\ve)}(\HH^{t+\zeta})$ satisfying \eref{follows2}, $R=R(\ve)$, $r=r(\ve)$,
defined by \eref{Repsilon}, \eref{repsilon},
 the scheme produces a numerical approximation $\bar u(\ve)$ to the solution $u$ of \eref{varprob-2} as follows:
\begin{itemize}
\item[(i)]
Fix  
the curves $\Gamma_j =\Gamma $, according to
\eref{Gamma}, $j=1,\ldots,d$, along with the collections of
 quadrature points  $\Gamma_{ Q}:=\{\Gamma(qh): q=-N,\ldots,N\}$, $N=N(h)$ (see \eref{NQ}), $h=h(\ve)$, given by \eref{condh2};
 \item[(ii)]   For $k=1,\ldots,R(\ve)$, $\ell=1,\ldots,r$, $q=-N,\ldots, N$, $j=1,\ldots, d$, compute
 an approximate solution $\bar u_{j,q} (\alpha_{R,k},g^{(\ell)}_j)$ to \eref{resolvent}, satisfying \eref{baru}
 for $s= 1$, $t'= -1+\zeta$;
 \item[(iii)]   compute $ E_j(g_j^{(\ell)},\alpha_{R,k},h(\ve))$ by \eref{Ej} for $j=1,\ldots,d$, $\ell=1,\ldots,r$, $k=1,\ldots, R$,
and output
\be
\label{uepsilon-2}
\bar u(\ve):= S_{R(\ve),h(\ve)}(\cB)(g_{r(\ve)}).
\ee
   \end{itemize}
\medskip

\subsection{Statement of the Theorem}

We can now formulate a theorem, which
bounds the number of computations necessary for achieving a prescribed accuracy $\ve$
using the numerical {\bf Scheme-Exp} described above.  As has been already mentioned, we describe this only for approximating the solution
$u$ of \eref{varprob-2} in the $\HH^1$ norm.
The main result of \S \ref{sec:complexity} reads as follows.

\begin{theorem}
\label{thm:complex}
 We assume that  {\bf (A1)} holds, i.e., we are given
a function $f\in \bar\cA^\gamma(\HH^{-1},\HH^{-1+\zeta})$ and for each $r\in\N$
a $g_r\in \cT(\HH^{-1+\zeta})$, satisfying {\rm \eref{gr}}.
Moreover, assume that $1\le \zeta \leq 2$ and that
{\bf (A2)}, {\bf (A3)} hold.  Then, given any target accuracy  $\ve >0$,
the approximate solution
$$
\bar u(\ve) = S_{R(\ve),h(\ve)}(\cB,g_{r(\ve)}),
$$
of rank at most $r(\ve)R(\ve)$, produced by {\bf Scheme-Exp} has the following properties:\\

\noindent
{\rm (i)} $\bar u(\ve)$ is $\ve$-accurate and stable, i.e.,
\be
\label{u-1}
\|u-\bar u(\ve)\|_1 \le \ve \|f\|_{\bar\cA^\gamma(\HH^{-1},\HH^{-1+\zeta})},\quad \tripnorm{\bar u(\ve)}_{r(\ve)R(\ve),1 } \leq 
A_2\|f\|_{\bar\cA^\gamma(\HH^{-1},\HH^{-1+\zeta})},
\ee
where $A_2$ depends only on $C_0$ and $A_1$.\\

\noindent
{\rm (ii)} $\bar u(\ve)$ is finitely parametrized and the total number of parameters determining $\bar u(\ve)$
is bounded by
\be
\label{parameters}
A_3 d^{1+ \bar\rho p/\zeta}  {\ve}^{-\bar\rho p/\zeta}\gamma^{-1}(4\bar C A_1/\ve) \Big(\log\Big(\frac{\bar C_2}{\ve}\Big)\Big)^2 \Big(\log\Big(\frac{d}{\ve}\Big)\Big)^2,
 \ee
where $\bar\rho > 2\pi b/c_6$ is any fixed number.

\noindent
{\rm (iii)} The computational complexity needed to compute $\bar u(\ve)$ by {\bf Scheme-Exp} is bounded by
\be
\label{total-compl}
\cost (\bar u(\ve)) \leq  
A_4 d^{1+ \bar\rho p/\zeta}  {\ve}^{-\bar\rho p/\zeta}\gamma^{-1}(4 \bar C A_1/\ve)\Big(\log\Big(\frac{\bar C_2}{\ve}\Big)\Big)^2  
\Big(\log\Big(\frac{d}{\ve}\Big)\Big)^2,
\ee
where  the constants $A_3$, $A_4$ depend on $\bar\rho$, $\ulambda$, $\Gamma$,  $A_1$, and the constants $C_{8}$, $C_9$ in  {\rm \eref{target-sol}}
and {\rm \eref{Vjdelta}} in
assumption {\bf (A3)}).
 \end{theorem}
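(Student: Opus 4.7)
The plan is to combine four ingredients already at our disposal: the regularity-level bound \eref{initial11} for $\|u-u(\ve)\|_1$, the quadrature/discretization bound of Proposition \ref{prop:SRBepsilon}, the stability estimates of Proposition \ref{thm3.1}, and the local low-dimensional complexity information in assumption~{\bf (A3)}.

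\textbf{Part (i).} I would deduce the accuracy claim by a two-term triangle inequality splitting $u-\bar u(\ve)=(u-u(\ve))+(u(\ve)-\bar u(\ve))$. Since $t=-1$, \eref{initial11} already gives $\|u-u(\ve)\|_1\le (\ve/2)\|f\|_{\bar\cA^\gamma(\HH^{-1},\HH^{-1+\zeta})}$. For the discretization error I would invoke Proposition \ref{prop:SRBepsilon} with $s=1$ and $t'=-1+\zeta$; the hypotheses $t+\zeta=\zeta-1\ge 0$ and $s-(t+\zeta)=2-\zeta<2$ are both covered by $\zeta\in[1,2]$, so combined with {\bf (A1)} this yields $\|u(\ve)-\bar u(\ve)\|_1\le (\ve/2)\|f\|_{\bar\cA^\gamma}$. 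For the tripnorm bound I would write $\bar u(\ve)=\sum_{\ell,k}\omega_{R,k}\bigotimes_{j=1}^d E_j(g_j^{(\ell)},\alpha_{R,k},h(\ve))$ and compare each rank-one summand with its exact counterpart $\omega_{R,k}{\rm e}^{-\alpha_{R,k}\cB}g^{(\ell)}$, whose $\HH^1$ norm is bounded by $(1+C_0)^{1/2}\|g^{(\ell)}\|_{-1}$ by Proposition \ref{thm3.1}(iv) applied factor-wise via Lemma \ref{lem:H-1}. Together with the global norm bound from Proposition \ref{thm3.1}(iii) and the already-controlled quadrature error, this produces the tripnorm estimate with an explicit constant $A_2$ depending only on $C_0$ and $A_1$.

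\textbf{Parts (ii) and (iii).} By construction $\bar u(\ve)$ has rank at most $r(\ve)R(\ve)$, each rank-one summand has $d$ factors, and each factor $E_j(g_j^{(\ell)},\alpha_{R,k},h(\ve))$ is an element of the trial space $V_{j,\delta}$, of dimension at most $C_8\delta^{-p/\zeta}$ by~{\bf (A3)}. To meet the $(1,-1+\zeta)$- and $(0,0)$-accuracy of Definition \ref{def:accurate} uniformly in $k=1,\ldots,R(\ve)$ and $\ell=1,\ldots,r(\ve)$, the required tolerance in \eref{baru} is proportional to $(C_3|\log(\alpha h(\ve))|)^{-1}C(\alpha){\rm e}^{-2\pi b/h(\ve)}$; substituting the choice $h(\ve)=c_6/\log(d/\ve)$ from \eref{condh2} gives ${\rm e}^{-2\pi b/h(\ve)}=(d/\ve)^{-2\pi b/c_6}$. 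Any $\bar\rho>2\pi b/c_6$ then absorbs the remaining polynomial-in-$\alpha$ and polylogarithmic factors (using that the smallest admissible rate $\alpha_{R,k}\ge T_{R(\ve)}^{-1}$ from Lemma \ref{explemma1} is still bounded below by a polynomial in $\ve$, and that the minimum growth \eref{gamma2} of $\gamma$ keeps $R(\ve)$ logarithmic in $1/\ve$), so that $\delta\lesssim(d/\ve)^{-\bar\rho}$ suffices and $\dim V_{j,\delta}\lesssim(d/\ve)^{\bar\rho p/\zeta}$. Each factor $E_j$ is assembled from $2N(h(\ve))+1$ discrete resolvent solves, with $2N(h(\ve))+1\lesssim h(\ve)^{-1}\log(1/(\alpha h(\ve)))\lesssim \log^2(d/\ve)$ by \eref{NQ}. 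The parameter count~\eref{parameters} then follows by multiplying $r(\ve)R(\ve)\cdot d\cdot\dim V_{j,\delta}$ and inserting $r(\ve)\le\gamma^{-1}(4\bar C A_1/\ve)$ from \eref{repsilon} together with $R(\ve)\lesssim(\log(\bar C_2/\ve))^2$ from \eref{Repsilon2}. The complexity bound \eref{total-compl} follows in the same way by multiplying the number $r(\ve)R(\ve)d(2N(h(\ve))+1)$ of resolvent solves by the per-solve cost $C_9\delta^{-p/\zeta}$ from \eref{flops} and checking that the subsequent assembly of weighted sums of tensor products is dominated by this.

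\textbf{The principal obstacle.} The main delicacy is the careful bookkeeping of slowly-growing factors across three coupled scales --- the data approximation error governed by $r(\ve)$, the exponential-sum truncation $R(\ve)$, and the resolvent discretization $\delta$. In particular, through the constant $C(\alpha)$ in Theorem \ref{thm:J-trapez} the worst-case exponent $\alpha\sim T_{R(\ve)}^{-1}$ feeds additional factors $1/\alpha$ and $\log(1/\alpha)$ into the required discretization tolerance, and one must verify that these remain polylogarithmic in $\ve$ and therefore absorbable. The freedom to choose $\bar\rho$ strictly greater than $2\pi b/c_6$ is precisely what is needed to lump all such subexponential contributions into the clean factor $(d/\ve)^{\bar\rho p/\zeta}$ appearing in the final bounds.
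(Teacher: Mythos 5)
Your overall architecture matches the paper's: Theorem~\ref{thm:uepsilon} for part (i) via the triangle inequality split through $u(\ve)$, Proposition~\ref{prop:SRBepsilon} for the discretization error, Proposition~\ref{thm3.1}(iv) for the tripnorm stability, and Lemma~\ref{lem:sol-complexity} plus a counting argument for (ii) and (iii). The bookkeeping you describe for parts (ii) and (iii) is essentially the paper's: bound $\delta(q,k,j,\ell)\lesssim(\ve/d)^{\bar\rho}$ using the choice of $h(\ve)$ and the lower bound $\alpha_{R(\ve),k}\ge T_{R(\ve)}^{-1}$, then multiply by $r(\ve)R(\ve)\,d\,(2N+1)$ and insert the bounds \eref{repsilon}, \eref{Repsilon2}, and $2N+1\lesssim(\log(d/\ve))^2$.

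There is, however, one genuine gap in part (i), and it is precisely the one that forces $1\le\zeta\le 2$. Proposition~\ref{prop:SRBepsilon} (through Lemma~\ref{lem:fullB} and Corollary~\ref{cor:fromnowon}) requires the approximate resolvent solutions $\bar u_{j,q}$ to be \emph{simultaneously} $(1,-1+\zeta)$-accurate and $(0,0)$-accurate. Step (ii) of \textbf{Scheme-Exp} only prescribes that they be $(1,-1+\zeta)$-accurate — that is, \eref{target-sol} of assumption {\bf (A3)} gives $H^1_j$-control in terms of $H^{-1+\zeta}_j$-norms of the data. You invoke Proposition~\ref{prop:SRBepsilon} (and later ``the $(1,-1+\zeta)$- and $(0,0)$-accuracy of Definition~\ref{def:accurate}'') as if both accuracies came for free, without explaining how the second one is obtained. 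The paper devotes an explicit passage to this: it writes $\zeta=1+\zeta'$, interpolates the trial-space error estimate to get $\|v-v_\delta\|_{H^1_j}\le C\delta^{1/\zeta}\|w\|_{L_2(D_j)}$, and then applies an Aubin--Nitsche duality argument to obtain $\|v-v_\delta\|_{L_2(D_j)}\le C\delta^{2/\zeta}\|w\|_{L_2(D_j)}\le C\delta\|w\|_{L_2(D_j)}$, the last step using $\zeta\le 2$. This is where the upper constraint $\zeta\le 2$ actually bites — not, as you suggest, in the hypothesis $s-(t+\zeta)<2$ of Proposition~\ref{prop:SRBepsilon}, which only needs $\zeta>0$. (The lower constraint $\zeta\ge 1$ does come from the requirement $t+\zeta\ge 0$ in Corollary~\ref{cor:fromnowon}, as you correctly identify.) Without this duality step, the chain Proposition~\ref{prop:SRBepsilon} $\Rightarrow$ Theorem~\ref{thm:uepsilon} $\Rightarrow$ part (i) does not close, so you should supply it.

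A minor remark on part (ii): the factor $(\log(d/\ve))^2$ in \eref{parameters} enters through $2N(h(\ve))+1$. You mention the $2N+1$ bound for part (iii) but then state the parameter count as $r(\ve)R(\ve)\cdot d\cdot\dim V_{j,\delta}$, which omits it; the paper treats the two bounds uniformly by counting resolvent solves, so you should include the $(2N+1)$ factor in the parameter count as well for consistency with \eref{parameters}.
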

The bounds \eref{parameters}, \eref{total-compl} given in Theorem \ref{thm:complex} are somewhat
worse than the benchmark provided by Theorem \ref{thm:rep-compl} and are perhaps not best possible.
They rely on the specific way of approximating $\cB^{-1}$ and its discretization via Dunford integrals.
In particular, it is not clear whether $\zeta\geq 1$ is necessary. The proof of Theorem \ref{thm:complex}, which
will be given in \S \ref{sec:proofs}, and Remark \ref{rem:problem} below will shed some light on this constraint.

It is also not clear whether the upper limitation $\zeta \leq 2$ is necessary, i.e.,  whether the bounds
in \eref{parameters} and \eref{total-compl} continue to hold for $\zeta >2$,  which would mean that one could exploit even higher regularity of
the solutions to the resolvent equations. As will be seen below, the restriction arises when applying
Proposition \ref{prop:SRBepsilon} requiring the numerical approximation to be simultaneously $(1,-1+\zeta)$- and $(0,0)$-accurate.

Nevertheless, the theorem confirms
polynomial numerical inversion tractability under overall very mild smoothness assumptions, exploiting instead a
``global structural sparsity'' of the solutions inferred from such a sparsity of the data. In fact, save for logarithmic terms
these bounds are comparable to those for the simple example about integration given in the introduction, see
\eref{N2}. In particular, although the growth in $\ve^{-1}$ and $d$ is somewhat stronger than in \eref{N2} or \eref{Ned},
one does benefit from a larger  excess regularity at least up to $\zeta\le 2$ of the low-dimensional problems.

Finally,   in contrast with Theorem \ref{thm:mainII} the
approximate solution $\bar u(\ve)$ is only guaranteed to be {\em tensor-stable}   with a controlled $\tripnorm{\cdot}_{rR,1}$ norm
not with a controlled $\tripnorm{\cdot}_{rR,1+\zeta}$ norm. First of all, the latter bound would require
the approximate solutions of the resolvent equations to belong to $H^{1+\zeta}_j$, which is not the case for $\zeta \geq 1/2$
and standard $C^0$-continuous finite elements. However, it can be seen from the proof of Theorem \ref{thm:complex}
that $\bar u(\ve)$ could be arranged to satisfy $\tripnorm{\bar u(\ve)}_{rR,1+\zeta'}\leq C \|f\|_{\bar \cA^\gamma(\HH^{-1},\HH^{-1+\zeta})}$
for some $0<\zeta' <\zeta$, for which the low-dimensional trial spaces fulfill $V_{j,\delta}\subset H^{1+\zeta'}_j$.
However, the numerical approximations to \eref{resolvent} would need to be $(1+\zeta',-1+\zeta)$-accurate and thus require possibly
finer discretizations. As a consequence, the bounds  \eref{parameters} and \eref{total-compl} would only hold for $\zeta$ replaced by
$\zeta-\zeta'$. Since, the asserted tensor-stability in $\HH^1$ suffices to avoid the occurrence of norm-degenerations often encountered
with the canonical tensor format, we  omit the details.

The results are to be contrasted with the intractability results in
  \cite{NW09} stating that the approximation of a high-dimensional function in a Sobolev norm of positive order is intractable
  even under excessively high smoothness assumptions.
 The results in \cite{WW} on the tractability of high-dimensional Helmholtz problems are different in spirit. The conditions on the
 data required in \cite{WW} strongly constrain the dependence of $f$ on some of the variables, and \rd{adopt the additional assumption that} $\cB^{-1}$ as a mapping
 from such a data class into $\HH^1$ can still be diagonalized. Thus the inversion of $\cB$, which is the focus of the present work, is not an issue there.

\section{Proof of Theorem \ref{thm:complex}}\label{sec:proofs}
 
The proof of Theorem \ref{thm:complex} is based on a series of intermediate, partly technical results.
The first of these is to give an approximation to  exponential operators of the type ${\rm e}^{-\alpha\cB}$ that appear in the definition of the numerical approximation $\bar u(\ve)$ of \eref{uepsilon-2}.
 
\subsection{Approximation of exponential operators} The present subsection and the next subsection require only the assumption {\bf (A1)}.
A recurring issue is the need to bound the norms of ${\rm e}^{-\alpha\cB_j}$ and the corresponding quadrature-based counterparts
as mappings from $H_j^{t'}$ to $H_j^{s}$ also when $s> t'$ while keeping track of the role of $\alpha >0$ when $\alpha$ gets
small.

In what follows we make an effort to trace the dependence of the various constants on the problem parameters and, in particular, on $d$.
To simplify the exposition somewhat we assume throughout this section that $\lambda_{j,m}\ge 1$, $j=1,\ldots,d$, $m\in \N$, which
implies that $\ulambda \ge 1$. As a consequence we have $\|\cdot\|_{H^s_j}\leq \|\cdot\|_{H^{s'}_j}$ for $s'\ge s$.
Of course, this can always be achieved by renormalizing $\cB$ by a fixed constant factor. Hence all subsequent conclusions
remain valid in the general case up to another adjustments of the constants depending on the smallest eigenvalues.

We begin with the exact quadrature operator $Q_N(F_j(\cdot,\alpha,\cdot))$.
 
\begin{lemma}
\label{lem:semigroupbound}
For any $s $ and $t'$, satisfying $s-t'\leq 2$,  and any $\alpha >0$, one has the following bounds:
\be
\label{e-alpha-s}
\|{\rm e}^{-\alpha\cB_j} \|_{H_j^{t'}\to H^s_j} \leq  
\Big(\frac{(s-t' )_+}{2e\alpha}\Big)^{\frac{(s-t')_+}{2}}, 
\ee
as well as
\be
\label{quad-bound-s}
\|Q_N(F_j(\cdot,\alpha,\cdot))\|_{H_j^{t'}\to H^s_j}\leq  
\Big(\frac{(s-t')_+}{2e\alpha}\Big)^{\frac{(s-t')_+}{2}} + \bar C(\alpha){\rm e}^{-2\pi b/h}.
\ee
\end{lemma}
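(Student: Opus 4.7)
My plan is to establish the first bound via spectral calculus and then derive the second by a triangle-inequality argument that combines the first bound with Theorem \ref{thm:J-trapez}.

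For \eref{e-alpha-s}, I expand any $v\in H_j^{t'}$ as $v=\sum_{n}\langle v,e_{j,n}\rangle e_{j,n}$, apply ${\rm e}^{-\alpha \cB_j}$ term by term using \eref{eigenexp}, and invoke the definition \eref{Hs-norms} of the $H^s_j$-norm to obtain
$$
\|{\rm e}^{-\alpha\cB_j}v\|_{H^s_j}^2=\sum_{n\in\N}\lambda_{j,n}^{s-t'}\,\mathrm{e}^{-2\alpha\lambda_{j,n}}\,\lambda_{j,n}^{t'}|\langle v,e_{j,n}\rangle|^2\le\Bigl(\sup_{x\ge \underline\lambda} x^{s-t'}\mathrm{e}^{-2\alpha x}\Bigr)\|v\|_{H^{t'}_j}^2.
$$
When $s\le t'$, the supremum is bounded by $1$ (recall $\underline\lambda\ge 1$), matching the convention $0^{0}=1$ in the right-hand side of \eref{e-alpha-s}. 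When $s>t'$, an elementary one-variable calculus argument shows that $x\mapsto x^{s-t'}\mathrm{e}^{-2\alpha x}$ attains its unconstrained maximum on $(0,\infty)$ at $x_*=(s-t')/(2\alpha)$, with value $((s-t')/(2e\alpha))^{s-t'}$; taking a square root yields the stated bound.

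For \eref{quad-bound-s}, I write
$$
Q_N(F_j(\cdot,\alpha,\cdot)) = {\rm e}^{-\alpha\cB_j}+\bigl(Q_N(F_j(\cdot,\alpha,\cdot))-{\rm e}^{-\alpha\cB_j}\bigr),
$$
apply the triangle inequality in the operator norm $\|\cdot\|_{H_j^{t'}\to H_j^{s}}$, and estimate the first summand by \eref{e-alpha-s}. For the second summand I use the remark following Theorem \ref{thm:J-trapez}, which extends the quadrature error estimate \eref{quad-err} to all pairs $(t',s)$ with $0\le s-t'\le 2$; the justification uses $s\le t'+2$ and $\underline\lambda\ge 1$ to embed $H^{t'+2}_j\hookrightarrow H^s_j$ with constant $\le 1$. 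Bounding $C(\alpha)\le\bar C(\alpha)$ via \eref{Calpha} then gives the remainder term $\bar C(\alpha)\mathrm{e}^{-2\pi b/h}$.

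The proof is mostly mechanical; the only mildly delicate point is tracking the explicit $\alpha$-dependence in the scalar maximization (since letting $\alpha\to 0$ worsens the bound in a controlled way) and making sure the quadrature-error inequality is applied in the correct operator-norm topology $H^{t'}_j\to H^s_j$, rather than only $H^{t'}_j\to H^{t'+2}_j$. The assumption $\underline\lambda\ge 1$ (already imposed in this section) is what ensures monotonicity of the scale $\|\cdot\|_{H^s_j}$ in $s$ and thereby legitimizes both the extension of the quadrature bound and the reduction to $(s-t')_+$ in the exponents.
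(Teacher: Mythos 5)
Your proof is correct and follows essentially the same route as the paper: spectral calculus to reduce \eref{e-alpha-s} to the scalar maximization of $x^{(s-t')/2}\mathrm{e}^{-\alpha x}$ (handling $s\le t'$ via monotonicity of $\|\cdot\|_{H^s_j}$ in $s$ under $\underline\lambda\ge 1$), and then the triangle inequality combined with Theorem~\ref{thm:J-trapez} for \eref{quad-bound-s}. The paper omits writing out the triangle-inequality step and the embedding remark explicitly, but the argument is the same.
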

\begin{proof}
We have
$$  
\|{\rm e}^{-\alpha\cB_j}\tau_j\|_{H^s_j}^2 = \sum_{k=1}^\infty {\rm e}^{-2\alpha\lambda_{j,k}}\lambda_{j,k}^{s-t'} \lambda_{j,k}^{t'}|
\langle \tau_j,e_{j,k}\rr|^2.
$$
When  {$s-t' \ge 0$} we note that the function $h(x)= x^{\beta}{\rm e}^{-\alpha x}$ attains its
maximum at $x^*=\beta/\alpha$, i.e.,
\be
\label{xbeta}
x^\beta {\rm e}^{-\alpha x} \leq   (\beta/\alpha)^{\beta}{\rm e}^{-\beta}.
\ee
 Taking $\beta = (s-t')/2$ confirms \eref{e-alpha-s} in this case.
 When $s-t' < 0$, we apply \eref{e-alpha-s} for $s=t'$ and use then that $\|\tau_j\|_{H^s_j}\leq \|\tau_j\|_{H^{t'}_j}$.
 The bound \eref{quad-bound-s} follows from \eref{e-alpha-s}
 and Theorem \ref{thm:J-trapez}.
 \end{proof}

\begin{remark}
\label{rem:0}
Note that, in particular, one has for $s\leq t'$ the following bounds:
\be
\label{specialest}
\|{\rm e}^{-\alpha\cB_j} \|_{H_j^{t'}\to H_j^s} \leq 1,\quad \|Q_N(F_j(\cdot,\alpha,\cdot))\|_{H_j^{t'}\to H_j^s}\leq 1+ C(\alpha){\rm e}^{-2\pi b/h}.
\ee
\end{remark}

Next we shall address the effect of replacing the quantities $u_{j,q}(\alpha,\tau_j) :=  F_j(qh,\alpha,\tau_j)$, for $q=-N,\ldots,N$, used in
$Q_N(F_j(\cdot,\alpha,\tau_j))$, which are the exact solutions of the resolvent equations \eref{resolvent}, by $(s,t')$-accurate
approximate solutions $\bar u_{j,q}(\alpha,\tau_j)$ for $\alpha$ (see  \eref{baru}) where $s,t'$ with $s-t'< 2$ are fixed.
We shall later need this for $s=t+2$   and $t'=t+\zeta$ with $0 < \zeta\le 2$ and will
analyze the complexity of these low-dimensional problems for these choices. We recall from \eref{Ej} the definition of the computable approximations $E_j(\tau_j,\alpha,h)$ to $Q_{N}(F_j(\cdot,\alpha,\tau_j))$.

 \begin{lemma}
\label{lem:Ej}
Let $s, t'\in \R$   satisfy  $s-t' < 2$. Assume that {\rm \eref{baru}} holds for $s$ and $t'$. Then, one has
\be
\label{Ejapprox}
\|{\rm e}^{-\alpha\cB_j}\tau_j - E_j(\tau_j,\alpha,h)\|_{H_j^s}\leq 2 \bar C(\alpha){\rm e}^{-2\pi b/h}\|\tau_j\|_{H_j^{t'}},
\ee
and
\be
\label{Ejs}
\|E_j(\tau_j,\alpha,h)\|_{H_j^s}\leq \left\{\Big(\frac{(s-t')_+}{2{\rm e}\alpha}\Big)^{\frac{(s-t')_+}{2}} +2 \bar C(\alpha){\rm e}^{-2\pi b/h}\right\}
\|\tau_j\|_{H_j^{t'}}.
\ee
\end{lemma}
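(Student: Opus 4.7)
The plan is to prove both bounds by a straightforward triangle inequality, splitting the total error between ${\rm e}^{-\alpha\cB_j}\tau_j$ and the computable $E_j(\tau_j,\alpha,h)$ into (a) the quadrature error for the exact integrand already controlled by Theorem \ref{thm:J-trapez}, and (b) the accumulated error caused by replacing the exact resolvent evaluations $u_{j,q}(\alpha,\tau_j)$ by their $(s,t')$-accurate approximants $\bar u_{j,q}(\alpha,\tau_j)$.

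For \eref{Ejapprox}, I would first write
\[
{\rm e}^{-\alpha\cB_j}\tau_j - E_j(\tau_j,\alpha,h) = \bigl({\rm e}^{-\alpha\cB_j}\tau_j - Q_N(F_j(\cdot,\alpha,\tau_j))\bigr) + h\sum_{q=-N}^{N}\bigl(u_{j,q}(\alpha,\tau_j)-\bar u_{j,q}(\alpha,\tau_j)\bigr).
\]
Since $s-t'\le 2$, Theorem \ref{thm:J-trapez} (used in the mapping-norm version noted just after it) controls the first summand in $H^s_j$ by $C(\alpha){\rm e}^{-2\pi b/h}\|\tau_j\|_{H^{t'}_j}\le \bar C(\alpha){\rm e}^{-2\pi b/h}\|\tau_j\|_{H^{t'}_j}$, using \eref{Calpha}. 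For the second summand, the $(s,t')$-accuracy \eref{baru} gives $\|u_{j,q}-\bar u_{j,q}\|_{H^s_j}\le (C_3|\log(\alpha h)|)^{-1}C(\alpha){\rm e}^{-2\pi b/h}\|\tau_j\|_{H^{t'}_j}$, so summing over $q$ and using $h(2N+1)\le C_3|\log(\alpha h)|$ from \eref{hN} yields a further $C(\alpha){\rm e}^{-2\pi b/h}\|\tau_j\|_{H^{t'}_j}\le \bar C(\alpha){\rm e}^{-2\pi b/h}\|\tau_j\|_{H^{t'}_j}$. Adding the two contributions produces the factor $2\bar C(\alpha){\rm e}^{-2\pi b/h}$.

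For \eref{Ejs}, I would simply invoke the triangle inequality
\[
\|E_j(\tau_j,\alpha,h)\|_{H^s_j}\le \|{\rm e}^{-\alpha\cB_j}\tau_j\|_{H^s_j}+\|{\rm e}^{-\alpha\cB_j}\tau_j - E_j(\tau_j,\alpha,h)\|_{H^s_j},
\]
bound the first term by Lemma \ref{lem:semigroupbound} with its explicit constant $((s-t')_+/(2{\rm e}\alpha))^{(s-t')_+/2}\|\tau_j\|_{H^{t'}_j}$, and the second by the result just proved. That yields \eref{Ejs} verbatim.

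The only mildly delicate point is making sure that the constant $C_3$ from \eref{hN} is exactly the one built into the definition of $(s,t')$-accuracy, so that the sum $h(2N+1)\cdot (C_3|\log(\alpha h)|)^{-1}$ collapses to $1$ rather than a constant depending on the choice of $h$ and $\alpha$; this is precisely why Definition \ref{def:accurate} was formulated with that particular normalization. No further obstacle is anticipated.
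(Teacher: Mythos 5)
Your proof is correct and follows essentially the same route as the paper's: decompose the error into the quadrature error (controlled by Theorem \ref{thm:J-trapez}) plus the accumulated resolvent-approximation error (controlled by \eqref{baru} and \eqref{hN}), then triangle-inequality. The only cosmetic difference is that for \eqref{Ejs} you add $\|{\rm e}^{-\alpha\cB_j}\tau_j\|_{H^s_j}$ (via \eqref{e-alpha-s}) to the bound in \eqref{Ejapprox}, whereas the paper adds $\|Q_N(F_j(\cdot,\alpha,\tau_j))\|_{H^s_j}$ (via \eqref{quad-bound-s}) to the $Q_N$-to-$E_j$ discrepancy; both yield the identical final estimate.
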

\begin{proof}
Using the fact that each of the terms in the sums for $E_j$ and $Q_N$ satisfies \eref{baru}  and the number  $2N+1$ of terms satisfies \eref{hN},  gives
$$
\|Q_N(F_j(\cdot,\alpha,\tau_j))- E_j(\tau_j,\alpha,h)\|_{H_j^s}\leq   {\rnew C(\alpha)}{\rm e}^{-2\pi b/h}\|\tau_j\|_{H_j^{t'}}.
$$
The first estimate \eref{Ejapprox} now follows from Theorem \ref{thm:J-trapez} by  using the triangle inequality together with the fact that $C(\alpha)\le \bar C(\alpha)$. Likewise
\eref{Ejs} is a consequence of \eref{Ejapprox} and \eref{quad-bound-s}.
\end{proof}

Defining $E(\tau,\alpha,h):= \bigotimes_{j=1}^d E_j(\tau_j,\alpha,h)$ according to \eref{Edef},
one has the following error bounds.

\begin{lemma}
\label{lem:fullB}
Assume that $s\ge 0$ and  $s-t' < 2$. Furthermore, assume that the approximate solutions $\bar u_{j,q}(\alpha,\tau_j)$
used in $E(\tau,\alpha,h)$
are $(s,t')$-accurate as well as $(0,0)$-accurate for $\alpha$ (see \eref{baru}).
 Then, whenever
 \be
\label{hcond1}
 h\leq h_0:= 2\pi b \Big(\log \Big(\frac{2C_1 d}{\alpha}\Big)\Big)^{-1}
 \ee
holds, one has:

\noindent
{\rm (i)} for $\alpha \le 1$,
 \begin{eqnarray}
 \label{fullBest}
 \|{\rm e}^{-\alpha\cB}\tau - E(\tau,\alpha,h)\|_{ s} &\leq &C_2d^2\big(d^{\max\{0,s-1\}})^{1/2}\bar C(\alpha){\rm e}^{-2\pi b/h}  \left\{ ({\rm e}\alpha)^{-1}
  +2 \bar C(\alpha){\rm e}^{-2\pi b/h}\right\}\nonumber\\
 &&\times  \left\{ \Big(\frac{(-t')_+}{2{\rm e}\alpha}\Big)^{\frac{(-t')_+}{2}} +\frac 1d\right\}^{d-2} \|\tau\|_{t'};
 \end{eqnarray}
 {\rm (ii)} for $\alpha \ge 1$,
 \begin{eqnarray}
 \label{fullBest1}
 \|{\rm e}^{-\alpha\cB}\tau - E(\tau,\alpha,h)\|_{ s} &\leq &C_2d^2\big(d^{\max\{0,s-1\}})^{1/2}
 \bar C(\alpha){\rm e}^{-2\pi b/h}  \nonumber\\
 &&\times  \left\{ \Big(\frac{(-t')_+}{2{\rm e}\alpha}\Big)^{\frac{(-t')_+}{2}} +\frac 1d\right\}^{d-1} \|\tau\|_{t'},
 \end{eqnarray}
 where the constant $C_2$ depends only on the constant in Corollary 1.
 \end{lemma}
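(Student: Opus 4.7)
The natural approach is a telescoping of the tensor difference, combined with the crossnorm bounds of Lemma \ref{lem:H-1}.

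First I would write the standard identity for tensor differences of rank one,
\[
 {\rm e}^{-\alpha\cB}\tau - E(\tau,\alpha,h) \;=\; \sum_{j=1}^{d} T_j,
\]
where
\[
 T_j \;:=\; \Bigl(\bigotimes_{i<j} E_i(\tau_i,\alpha,h)\Bigr)\;\otimes\; R_j \;\otimes\; \Bigl(\bigotimes_{i>j} {\rm e}^{-\alpha\cB_i}\tau_i\Bigr), \qquad R_j := {\rm e}^{-\alpha\cB_j}\tau_j - E_j(\tau_j,\alpha,h).
\]
Each $T_j$ is a rank-one tensor, so its $\HH^s$ norm is controlled by the upper bound in \eqref{taunormHs}: a sum over $i=1,\dots,d$ of terms in which one factor is measured in $H^s_i$ and the remaining $d-1$ factors are measured in $L_2$. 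This produces a double sum $\sum_{j}\sum_{i}$ that I then split into the diagonal case $i=j$ and the off-diagonal case $i\neq j$.

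Next I would estimate all the factors. For the distinguished component $R_j$ the error estimate \eqref{Ejapprox} from Lemma \ref{lem:Ej} (which uses the $(s,t')$-accuracy of the resolvent approximations) gives
\[
 \|R_j\|_{H^s_j}\;\le\; 2\bar C(\alpha){\rm e}^{-2\pi b/h}\|\tau_j\|_{H^{t'}_j},
\]
and, when $i\neq j$, one also needs $\|R_j\|_{L_2}\le 2\bar C(\alpha){\rm e}^{-2\pi b/h}\|\tau_j\|_{L_2}$, which is where the second, $(0,0)$-accuracy hypothesis enters. The exact factors ${\rm e}^{-\alpha\cB_i}\tau_i$ are controlled by Lemma \ref{lem:semigroupbound}, whose bound specializes to $B:=\bigl((-t')_+/(2e\alpha)\bigr)^{(-t')_+/2}$ in the $L_2$ norm and to $1/({\rm e}\alpha)$ (the worst value over $s-t'\in [0,2]$) for the $H^s$ norm in the range $\alpha\le 1$; otherwise the bound is $\le 1$. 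The perturbed factors $E_i(\tau_i,\alpha,h)$ obey the same bounds up to the additive numerical error $2\bar C(\alpha){\rm e}^{-2\pi b/h}$ by Lemma \ref{lem:Ej}.

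The hypothesis \eqref{hcond1} on $h$ is used precisely to tame this additive error: a short calculation using $\bar C(\alpha)\le C_1{\rm e}^{-c_1\alpha}/\alpha$ yields
\[
 2\bar C(\alpha){\rm e}^{-2\pi b/h}\;\le\;\tfrac{1}{d},
\]
so every $L_2$ factor satisfies $\|E_i(\tau_i,\alpha,h)\|_{L_2}\le (B+\tfrac{1}{d})\|\tau_i\|_{H^{t'}_i}$. Multiplying these bounds and carrying out the double sum produces a factor $d^2$ (one $d$ from the telescoping index $j$, another $d$ from the Lemma \ref{lem:H-1} index $i$), a single smallness factor $\bar C(\alpha){\rm e}^{-2\pi b/h}$, the $(d^{\max\{0,s-1\}})^{1/2}$ prefactor of \eqref{taunormHs}, and the product $(B+1/d)^{d-2}$ of the remaining $L_2$ factors. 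In the range $\alpha\le 1$ the off-diagonal $i\neq j$ terms carry an extra $H^s$ factor of size $\le (e\alpha)^{-1}+2\bar C(\alpha){\rm e}^{-2\pi b/h}$, yielding \eqref{fullBest}; in the range $\alpha\ge 1$ that same factor is absorbed into an additional $(B+1/d)$, giving \eqref{fullBest1}. The last step is to convert the remaining product $\prod_k\|\tau_k\|_{H^{t'}_k}$ appearing after the estimates into $\|\tau\|_{t'}$, for which I would invoke Corollary \ref{cor:duality} (for $t'<0$) or the lower bound in Lemma \ref{lem:H-1} (for $t'\ge 0$) applied to one carefully chosen index per term.

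The main technical obstacle, and the reason the bookkeeping cannot be shortened much further, is the need to manage two different norms ($H^s$ and $L_2$) simultaneously on the same tensor factors while keeping track of the sign of $t'$ (which switches between Lemma \ref{lem:H-1} and Corollary \ref{cor:duality} when converting component norms to $\|\tau\|_{t'}$). The dichotomy $\alpha\lessgtr 1$ is unavoidable because the semigroup-type bound $(x/(2e\alpha))^{x/2}$ is non-monotone in $x$ and transitions its maximal value at $\alpha=1$.
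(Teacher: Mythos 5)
Your proposal follows the same route as the paper: you telescope the rank-one difference exactly as in the displayed decomposition, apply the upper crossnorm estimate of Lemma~\ref{lem:H-1} to each rank-one summand, bound the distinguished factor via Lemma~\ref{lem:Ej} using $(s,t')$- and $(0,0)$-accuracy, bound the remaining exact and perturbed factors via Lemma~\ref{lem:semigroupbound} and Lemma~\ref{lem:Ej}, use the hypothesis \eqref{hcond1} to force $2\bar C(\alpha){\rm e}^{-2\pi b/h}\le 1/d$, split the cases $\alpha\le 1$ and $\alpha\ge 1$, and finally pass from the product of component norms to $\|\tau\|_{t'}$ via the lower bound in Lemma~\ref{lem:H-1} (for $t'\ge 0$) or Corollary~\ref{cor:duality} (for $t'<0$). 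This matches the paper's argument step by step; the minor over-simplification that the worst $H^s$ factor for $\alpha\le 1$ is exactly $({\rm e}\alpha)^{-1}$ rather than $\max\{1,({\rm e}\alpha)^{-1}\}$ is harmless (costs only an absolute constant absorbed into $C_2$) and the paper implicitly makes the same abbreviation.
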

\begin{proof}
 We use a telescoping  decomposition, as in \eref{ae24}, to obtain the bound
\be
\label{decomp}
 \|{\rm e}^{-\alpha\cB}\tau - E(\tau,\alpha,h)\|_{s} \leq \sum_{i=1}^d\|S_i\|_s,
 \ee
where
\be
\label{Si1}
S_i:= 
\bigotimes_{j=1}^{i-1} E_j(\tau_j,\alpha,h)\otimes  ({\rm e}^{-\alpha\cB_i}\tau_i- E_i(\tau_i,\alpha,h) )\bigotimes_{j=i+1}^d {\rm e}^{-\alpha\cB_j}\tau_j:= \bigotimes_{j=1}^d S_{i,j}.  
\ee
We estimate next the terms $\|S_i\|_{s}$
with the aid of
 Lemma \ref{lem:H-1}, using the assumption $s\geq 0$.
 Since the $\ell_2$ norm \rd{does not exceed} the $\ell_1$ norm, this lemma gives that
 \be
 \label{Si}
 \|S_i\|_s\le  (d^{\max\{0,s-1\}})^{1/2}   \sum_{j=1}^{d}\|S_{i,j}\|_{H^{s}_j}\prod_{k\neq j}\|S_{i,k}\|_{L_2(D_k)}.
\ee
The previous lemmas provide the following \rd{bounds} for each $i=1,\dots,d$:
  \be
 \label{est1}
  \|S_{i,k}\|_{H_k^{s}(D_k)}\leq \|\tau_k\|_{H_k^{t'}(D_k)}
 \left\{
\begin{array}{ll}
 2 \bar C(\alpha){\rm e}^{-2\pi b/h},& k=i;\\
 \Big(\frac{(s-t')_+}{2{\rm e}\alpha}\Big)^{\frac{(s-t')_+}{2}} +2 \bar C(\alpha){\rm e}^{-2\pi b/h}, & k\neq i.
\end{array}
\right.
\ee
Indeed,  by $(s,t')$-accuracy, the first inequality in \eref{est1} follows from \eref{Ejapprox} while the second follows from
\eref{Ejs} and \eref{e-alpha-s}.
Similarly, we have the following bounds on  the $L_2$ norms for $\mu \in \{0,t'\}$,
\be
 \label{est2}
 \|S_{i,k}\|_{L_2(D_k)}\leq \|\tau_k\|_{H_k^{\mu}(D_k)}
 \left\{
\begin{array}{ll}
 2 \bar C(\alpha){\rm e}^{-2\pi b/h},& k=i;\\
 \Big(\frac{(-\mu)_+}{2{\rm e}\alpha}\Big)^{\frac{(-\mu)_+}{2}} +2 \bar C(\alpha){\rm e}^{-2\pi b/h}, & k\neq i.
\end{array}
\right.
 \ee
Here,  the first inequality follows from \eref{Ejapprox} while the second follows from
\eref{Ejs} and \eref{e-alpha-s}. In fact, since $\|\cdot\|_{L_2(D_j)}\le \|\cdot\|_{H^s_j}$ we have used $(s,t')$-accuracy
when $\mu= t'$ while for $\mu=0$ we used $(0,0)$-accuracy.
Moreover,  under the assumption \eref{hcond1}, we
have
\be
\label{then1}
2\bar C(\alpha){\rm e}^{-2\pi b/h}\leq \frac{2C_1}{\alpha}{\rm e}^{-2\pi b/h}\leq 1/d.
\ee

{\bf Case $t'\ge0$:}   Let us first note that   from the first inequality in Lemma \ref{lem:H-1}, we have
   \be
\label{lbl}
\sum_{i=1}^d \|\tau_i\|_{H_i^{t'}}
\prod_{k\neq i} \|\tau_k\|_{L_2(D_k)}\le   \|\tau\|_{t'}.
\ee
We first consider any term appearing in the sum on the right-hand side of \eref{Si} with $j\neq i$ and obtain
\begin{eqnarray}
\label{jneqi}
&&\|S_{i,j}\|_{H_j^s}\prod_{k\neq j}\|S_{i,k}\|_{L_2(D_k)}\nonumber \\
&&\;\le \{ ({\rm e}\alpha)^{-1}
 +2\bar C(\alpha){\rm e}^{-2\pi b/h}\}\|\tau_j\|_{H^{t'}_j} \{2\bar C(\alpha){\rm e}^{-2\pi b/h}\|\tau_i\|_{L_2(D_i)}\}\prod_{k\neq i,j} (1+d^{-1})\|\tau_k\|_{L_2(D_k)}\nonumber \\
 &&\;\le  \{ ({\rm e}\alpha)^{-1}
 +2\bar C(\alpha){\rm e}^{-bh}\} \{2\bar C(\alpha){\rm e}^{-2\pi b/h}\}\{1+d^{-1}\}^{d-2}\|\tau_j\|_{H^{t'}_j} \prod_{k\neq j} \|\tau_k\|_{L_2(D_k)},
\end{eqnarray}
where we used \eref{est1}  
 for the  term outside of the product, and we used
\eref{est2} with $\mu=0$  and the bound \eref{then1} for the remaining terms.  Similarly, for the term $j=i$ in \eref{Si},  we have
\begin{eqnarray}
\label{jeqi}
&&\|S_{i,i}\|_{H_i^s}\prod_{k\neq i}\|S_{i,k}\|_{L_2(D_k)}\nonumber\\
&&\;\le \{ (2\bar C(\alpha){\rm e}^{-2\pi b/h}\|\tau_i\|_{H_i^{t'}}\}
\prod_{k\neq i} (1+d^{-1})\|\tau_k\|_{L_2(D_k)}\nonumber\\
&&\;\le  \{2\bar C(\alpha){\rm e}^{-2\pi b/h}\}\{1+d^{-1}\}^{d-1}\|\tau_i\|_{H_i^{t'}}
\prod_{k\neq i} \|\tau_k\|_{L_2(D_k)}.
\end{eqnarray}

If $\alpha\le 1$, we use the \rd{bounds} \eref{jneqi} and \eref{jeqi} in the sum on the right-hand side of \eref{Si},
and we arrive at
\be
\label{Siest}
\|S_i\|_{\HH^s}\le (d^{\max\{0,s-1\}})^{1/2}  d \{ ({\rm e}\alpha)^{-1}
 +2\tilde C(\alpha){\rm e}^{-bh}\} \{2\tilde C(\alpha){\rm e}^{-bh}\}\{1+d^{-1}\}^{d-2}\|\tau\|_{\tau'},
\ee
where we used \eref{lbl} and the fact that $(1+1/d)\le 2e \{ ({\rm e}\alpha)^{-1}
 +2\tilde C(\alpha)\}$.  If we now sum over $i=1,\dots,d$, we arrive at \eref{fullBest} and complete the proof in this case.
A similar argument gives the case (ii) when $\alpha>1$.

{\bf Case $t'<0$:} The proof in this case is similar to that of {\bf Case 1}
 except that we use Corollary \ref{cor:duality} in place of \eref{lbl} and
 this forces us to use \eref{est2} for $\mu=t'$ rather than $\mu=0$.   Since we will not
use this case in what follows (see Remark \ref{rem:problem}), we do not include the details.\end{proof}

\begin{remark}
\label{rem:problem}
In view of the value of  $\bar C(\alpha)$ given in {\rm \eref{Calpha}}, the estimate
{\rm \eref{fullBest1}} shows, in particular, that
$$
 \|{\rm e}^{-\alpha\cB}\tau - E(\tau,\alpha,h)\|_{ s}\le Cd^2\big(d^{\max\{0,s-1\}})^{1/2}\alpha^{-1}{\rm e}^{-c_1\alpha}{\rm e}^{-2\pi b/h}\|\tau\|_{t'},\quad \mbox{when}\,\,\alpha\geq 1,
$$
 where $C$ depends only the embedding constant from Corollary \ref{cor:duality} and on $C_1$ from {\rm \eref{Calpha}}.
Therefore, {\rm \eref{fullBest}} is of primary importance for small $\alpha$. In this regime, when $t' < 0$, the right-hand side of {\rm \eref{fullBest}}
contains the factor $\big( (|t'|/2{\rm e}\alpha)^{|t'|/2} +(1/d)\big)^{d-2}$. Thus, whenever $(|t'|/2{\rm e}\alpha) >\big((d-1)/d\big)^{2/|t'|}$
this factor exhibits an exponential growth of the form  $\alpha^{-|t'|(d-2)/2}$.  In fact, as will be seen below, $\alpha$
can be as small as $\ve^2$ so that, in order  to compensate this growth,  the factor ${\rm e}^{-2\pi b/h}$ would have to satisfy at least
${\rm e}^{-2\pi b/h}\lsim \ve^{|t'|(d-2)}$. Thus, the accuracy {\rm \eref{ubar}} needed in the low-dimensional problems would scale at least  like
$\ve^{|t'|(d-2)}$, which is exponential in $\ve$. To avoid this, at least in our proof, one apparently has to require $t' \geq 0$. Later this means that
the excess regularity $\zeta$ should satisfy $t+\zeta \geq 0$, which becomes \rd{increasingly} stringent \rd{with decreasing} $t$.
 \end{remark}

\begin{corollary}
\label{cor:fromnowon}
Under the assumptions of Lemma \ref{lem:fullB} suppose that in addition
 \be
\label{classical}
t' \geq 0.
\ee
Then,   for $s-t' < 2$, for which {\rm \eref{baru}} holds, one has
\be
\label{fullBest-2}
\|{\rm e}^{-\alpha\cB}\tau - E(\tau,\alpha,h)\|_{s}  \leq  C_5 d^2 \big(d^{\max\{0,s-1\}})^{1/2}
\max\{\alpha^{-1},\alpha^{-2}\}{\rm e}^{-2\pi b/h} {\rm e}^{-c_1\alpha} \|\tau\|_{t'},
\ee
where $C_5$ depends only on the embedding constant from Corollary \ref{cor:duality} and on $\ulambda$ and $\Gamma$ through
the constants in {\rm \eref{Calpha}}.
\end{corollary}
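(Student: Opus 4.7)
The plan is to invoke Lemma \ref{lem:fullB} directly and observe that the hypothesis $t'\ge 0$ causes the potentially dangerous factor $\bigl(\frac{(-t')_+}{2\mathrm{e}\alpha}\bigr)^{(-t')_+/2}$ (which is the mechanism behind the exponential-in-$d$ blow-up flagged in Remark \ref{rem:problem}) to collapse to $1$. This reduces both \eqref{fullBest} and \eqref{fullBest1} to bounds whose $d$-dependence is tame, and the remaining work is purely bookkeeping of the $\alpha$-dependent factors, using $\bar C(\alpha)=(C_1/\alpha)\mathrm{e}^{-c_1\alpha}$ from \eqref{Calpha}.

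First I would treat the case $\alpha\le 1$. Since $(-t')_+=0$, the bracketed factor in \eqref{fullBest} becomes $\{1+1/d\}^{d-2}\le \mathrm{e}$. Moreover, the hypothesis \eqref{hcond1} of Lemma \ref{lem:fullB} gives $2\bar C(\alpha)\mathrm{e}^{-2\pi b/h}\le 1/d\le 1$, so
\[
(\mathrm{e}\alpha)^{-1}+2\bar C(\alpha)\mathrm{e}^{-2\pi b/h}\;\le\;(\mathrm{e}\alpha)^{-1}+1\;\le\;C\alpha^{-1},
\]
because $\alpha\le 1$. Substituting $\bar C(\alpha)=(C_1/\alpha)\mathrm{e}^{-c_1\alpha}$ for the leading $\bar C(\alpha)$ factor in \eqref{fullBest} yields a bound of the form
\[
C\,d^{2}\bigl(d^{\max\{0,s-1\}}\bigr)^{1/2}\,\alpha^{-2}\,\mathrm{e}^{-c_1\alpha}\,\mathrm{e}^{-2\pi b/h}\,\|\tau\|_{t'},
\]
which is absorbed by $\max\{\alpha^{-1},\alpha^{-2}\}=\alpha^{-2}$ in this regime.

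For $\alpha\ge 1$ I would apply \eqref{fullBest1}; again $(-t')_+=0$ makes the bracketed factor $\{1+1/d\}^{d-1}\le \mathrm{e}$, and only a single copy of $\bar C(\alpha)$ appears. Inserting $\bar C(\alpha)=(C_1/\alpha)\mathrm{e}^{-c_1\alpha}$ gives
\[
C\,d^{2}\bigl(d^{\max\{0,s-1\}}\bigr)^{1/2}\,\alpha^{-1}\,\mathrm{e}^{-c_1\alpha}\,\mathrm{e}^{-2\pi b/h}\,\|\tau\|_{t'},
\]
consistent with $\max\{\alpha^{-1},\alpha^{-2}\}=\alpha^{-1}$ here. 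Merging the two regimes under the common prefactor $\max\{\alpha^{-1},\alpha^{-2}\}$ and absorbing all universal constants (from Corollary \ref{cor:duality}, from $\ulambda$, and from $C_1,c_1$ in \eqref{Calpha}) into a single constant $C_5$ produces \eqref{fullBest-2}.

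There is no real obstacle here: the whole point of imposing $t'\ge 0$ is precisely to kill the factor identified in Remark \ref{rem:problem}. The only mild care needed is verifying that in the $\alpha\le 1$ regime the additive term $2\bar C(\alpha)\mathrm{e}^{-2\pi b/h}$ inside the bracket does not dominate $(\mathrm{e}\alpha)^{-1}$, which is controlled for free by the quadrature step-size condition \eqref{hcond1}.
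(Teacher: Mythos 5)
Your proposal is correct and matches the intended (though unwritten) derivation: the paper states this Corollary as an immediate consequence of Lemma \ref{lem:fullB} precisely because $t'\ge 0$ makes $(-t')_+=0$, killing the potentially exponential-in-$d$ factor and reducing the bracket to $\{1+1/d\}^{d-k}\le \mathrm{e}$, after which the $\alpha$-bookkeeping via \eqref{Calpha} and \eqref{then1} gives \eqref{fullBest-2} in both regimes. Your handling of the $\alpha\le 1$ case — using \eqref{hcond1} (via \eqref{then1}) to control $2\bar C(\alpha)\mathrm{e}^{-2\pi b/h}\le 1/d\le 1$ so that the inner bracket is $\lesssim \alpha^{-1}$ — is exactly the right step.
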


 \subsection{The analysis and proof of Proposition \ref{prop:SRBepsilon}}
We proceed now to the analysis of the approximation of $u$ by
$$  
  \bar S_{R(\ve),h}(\cB)(g_{r(\ve)}) := \sum_{\ell=1}^{r(\ve)} \sum_{k=1}^{R(\ve)} \omega_{R(\ve),k} E(g^{(\ell)},\alpha_{R(\ve),k},h),
$$ 
as defined by \eref{uepsilon-0}. Recall that our goal is to show that when $h=h(\ve)$ is given by \eref{condh2}, then we have
$$
\big\|u(\ve)-S_{R(\ve),h}(\cB)(g_{r(\ve)})\big\|_{t+2}\le  \frac{\ve}2 \|f\|_{\bar \cA^\gamma(\HH^t,\HH^{t+\zeta})},
$$
where $u(\ve)= \bar S_{R(\ve)}(\cB)g_{r(\ve)}$, with the choice of $R(\ve)$ and $r(\ve)$ specified in \eref{Repsilon} and \eref{repsilon},
  is known to satisfy
$\| u -  u (\ve) \|_{t+2}\le     \frac{\ve}2 \|f\|_{\bar\cA^\gamma(\HH^t,\HH^{t+\zeta})}$.

 In going further, we recall from Lemma \ref{explemma1} that any summand that appears in $\bar S_{R(\ve)}(\cB)$ satisfies
 \be
\label{alphalower}
\alpha_{R(\ve),k} \geq T_{R(\ve)}^{-1} =  8 {\rm e}^{-\pi\sqrt{R(\ve)}} \ge  {8{\rm e}^{-\pi}}\Big(\frac{\bar C_2}{\ve}\Big)^{-\frac{2\pi}{\zeta \ua}},
\ee
 where we have used  the definition of $R(\ve)$ and $C_1(\zeta)$ in  \eref{Repsilon}.  This
means that
\be
\label{alphalower2}
\alpha_{R(\ve),k} \geq   c_3\ve^{e(\zeta)} =: \alpha(\ve),\quad e(\zeta):= \frac{2\pi}{\zeta \ua},
\ee
where $c_3$  depends only on $\bar C_0, \bar C$, and $A_1$, see Proposition \ref{prop:Sbar}, {\bf (A1)}, and \eref{gamma1}.\\

\noindent
{\bf Proof of Proposition \ref{prop:SRBepsilon}:}
 First note that  if $c_6$ is chosen sufficiently small,  then any $h\le h(\ve)$ will   comply with the threshold \eref{hcond1} required in Lemma \ref{lem:fullB}.
In fact, inserting the expression \eref{alphalower2} into \eref{hcond1}, gives
$$
h_0= 2\pi b \Big(\log\Big(\Big(\frac{2C_1}{c_3}\Big)\Big(\frac{ {d}^{1/e(\zeta)}}{\ve}\Big)^{e(\zeta)} \Big)\Big)^{-1}
\ge c_6\big(\log(d/\ve)\big)^{-1}
$$
provided $c_6$ is chosen sufficiently small (depending in part on $\zeta$),   because we know that $\ua<\pi$.
 We consider only such $c_6$ in what follows.  Therefore, Lemma \ref{lem:fullB}, respectively \eref{fullBest-2} in Corollary \ref{cor:fromnowon},
apply.  Bearing \eref{Calpha} in mind, this yields
 for $r=r(\ve)$, $R=R(\ve)$,
given by \eref{Repsilon}, \eref{repsilon},
\be
\label{error}
\|\bar u(\ve)- u(\ve)\|_{s} \leq d^2 \big(d^{\max\{0,s-1\}})^{1/2}C_5  {\rm e}^{-2\pi b/h}
 \Big(
\sum_{\ell =1}^r \sum_{k=1}^R \omega_{R,k} {\rm e}^{-\alpha_{R,k}c_1} \max\{\alpha_{R,k}^{-1},\alpha_{R,k}^{-2}\}
\Big) \tripnorm{g_{r}}_{r,t+\zeta}.
\ee
To identify further stipulations on $c_6$ that eventually guarantee \eref{errorh}, we infer next  from Lemma \ref{explemma},  \eref{expfa} and \eref{Calpha} that there exists a constant $C(\Gamma)$
depending only on $\Gamma$ and $\ulambda$ such that
$$
\sum_{k=1}^R \omega_{R,k}{\rm e}^{-\alpha_{R,k}c_1} \leq C(\Gamma).
$$
Hence, $\|\bar u(\ve)- u(\ve)\|_{s} \leq \frac{\ve}{2A_1}\|g_{r(\ve)}\|_{t+\zeta}$ follows provided we can show that
\be
\label{when1}
d^2 \big(d^{\max\{0,s-1\}})^{1/2} C_5C(\Gamma)\,   {\rm e}^{-2\pi b/h}
\le \frac{\ve T_{R(\ve)}^{-2}}{2A_1r(\ve)}.
\ee
Now, by the condition \eref{gamma2} on the minimum
growth of $\gamma$, there exists \rd{a} $\mu >0$ and a \rd{positive} constant $C_\mu$ such that $x^\mu \leq C_\mu \gamma(x)$ \rd{for $x \geq 1$},
which implies that $\gamma^{-1}(x)\leq (C_\mu  x)^{1/\mu}$ \rd{for} $x\geq 1$. By the definition \eref{repsilon} of
$r(\ve)$ we thus conclude that
$$
r(\ve) \leq \gamma^{-1}(\bar C 4 A_1/\ve)\leq \Big(\frac{4 C_\mu \bar C A_1}{\ve}\Big)^{1/\mu}.
$$
Placing this into the right-hand side of \eref{when1}, we are left to show that
\be
\label{when2}
d^2 \big(d^{\max\{0,s-1\}})^{1/2} C_5C(\Gamma)\,   {\rm e}^{-2\pi b/h}
\le \frac{\ve T_{R(\ve)}^{-2}}{ 2A_1} \Big(\frac{4 C_\mu\bar C A_1}{\ve}\Big)^{-1/\mu},
\ee
where $\mu$ is the constant in the minimum growth condition \eref{gamma2}. 

Now using the bound for $T^{-1}_{R(\ve)}$ given in \eref{alphalower}, we need only show \rd{that}
\be
\label{when3}
d^2 \big(d^{\max\{0,s-1\}})^{1/2} C_5C(\Gamma)\,   {\rm e}^{-2\pi b/h}
\le   {\rm e}^{-2\pi}\Big(\frac{\bar C_2}{\ve}\Big)^{-\frac{4\pi}{\zeta \ua}}  \frac{32\ve }{ A_1} \Big(\frac{4C_\mu\bar C A_1}{\ve}\Big)^{-1/\mu}.
\ee
Hence, it suffices to show that
\be
\label{when4}
{\rm e}^{-2\pi b/h}\le \hat C \Big (\frac{\ve}{d}\Big )^{\hat a},
\ee
where $\hat C$ and $\hat a$ are appropriate constants.  Taking a logarithm, we see that if $c_6$ is sufficiently small, then \eref{when4}
will be satisfied for $h\le c_6/\log (\frac {d}{\ve})$.
\hfill $\Box$\\

\subsection{Proof of Theorem \ref{thm:complex}, (i)}
\begin{theorem}
\label{thm:uepsilon}
Assume that {\bf (A1)} is valid and hence the conditions $(\gamma1), (\gamma2)$ hold and let $t+\zeta \geq 0$.
 Furthermore, assume that  for $k=1,\ldots,R(\ve)$, $\ell=1,\ldots,r(\ve)$, $j=1,\ldots,d$, $q=-N,\ldots,N$, $N=N(h(\ve))$,
 the approximate solutions $\bar u_{j,q}(\alpha_{R(\ve),k},g^{(\ell)}_j)$, entering
  the $E_j$ from {\rm \eref{Ej}} in the definition of $S_{R(\ve),h(\ve)}(\cB)(g_{r(\ve)})$, satisfy \eref{baru}
for the pairs
$$
s:= t+2 ,\quad t':= t+\zeta,\quad \mbox{and}\quad s=t'=0.
$$
 Then, the finitely parametrized finite rank function $u(\ve)$, given by {\rm \eref{uepsilon}},  
satisfies
\be
\label{accuracy}
\|u- \bar u(\ve)\|_{t+2}\le \ve  \|f\|_{\bar\cA^\gamma(\HH^{t},\HH^{t+\zeta})},
\ee
and is stable in the sense that  
\be
\label{stability}
  \tripnorm{\bar u(\ve)}_{r(\ve)R(\ve),t+2}
\leq C_7 \|f\|_{\bar\cA^\gamma(\HH^{t},\HH^{t+\zeta})},
\ee
where $C_7$ depends   on $C_0$ and $A_1$.
\end{theorem}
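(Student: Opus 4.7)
The plan is to establish \eref{accuracy} and \eref{stability} separately, with both claims leveraging the triangle inequality together with the error estimates already derived in Proposition \ref{prop:SRBepsilon}, Proposition \ref{thm3.1}(iv), and Corollary \ref{cor:fromnowon}.

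For the accuracy claim \eref{accuracy}, I would write
\[
\|u-\bar u(\ve)\|_{t+2} \le \|u - u(\ve)\|_{t+2} + \|u(\ve) - \bar u(\ve)\|_{t+2},
\]
where $u(\ve) := \bar S_{R(\ve)}(\cB) g_{r(\ve)}$ is the semi-discrete approximation analyzed in \eref{initial1}. The first term is bounded by $\tfrac{\ve}{2}\|f\|_{\bar\cA^\gamma(\HH^t,\HH^{t+\zeta})}$ by \eref{initial11}. For the second term I would invoke Proposition \ref{prop:SRBepsilon} with the distinguished choice $s=t+2$, $t'=t+\zeta$ (noting $s-t'=2-\zeta <2$, which is permitted by the hypothesis $0<\zeta\le 2$); this yields $\|u(\ve)-\bar u(\ve)\|_{t+2}\le \tfrac{\ve}{2A_1}\|g_{r(\ve)}\|_{t+\zeta}$, and then the stability bound in assumption {\bf (A1)} gives $\|g_{r(\ve)}\|_{t+\zeta}\le \tripnorm{g_{r(\ve)}}_{r(\ve),t+\zeta}\le A_1\|f\|_{\bar\cA^\gamma(\HH^t,\HH^{t+\zeta})}$. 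Summing the two halves completes \eref{accuracy}.

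For the stability claim \eref{stability}, I would exhibit the explicit representation
\[
\bar u(\ve) \;=\; \sum_{\ell=1}^{r(\ve)}\sum_{k=1}^{R(\ve)} u^{k,\ell}, \qquad u^{k,\ell}:=\omega_{R(\ve),k}\,E(g^{(\ell)},\alpha_{R(\ve),k},h(\ve)),
\]
in which each $u^{k,\ell}$ is a rank-one tensor (since $E(\tau,\alpha,h)=\bigotimes_j E_j(\tau_j,\alpha,h)$), giving at most $r(\ve)R(\ve)$ rank-one summands. It then suffices to bound $\|\bar u(\ve)\|_{t+2}$ and $\max_{k,\ell}\|u^{k,\ell}\|_{t+2}$. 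The global bound follows from \eref{accuracy} and the isometry \eref{isometry}, since $\|\bar u(\ve)\|_{t+2}\le \|u\|_{t+2}+\ve\|f\|_{\bar\cA^\gamma}$ and $\|u\|_{t+2}=\|f\|_t\lesssim \|f\|_{\bar\cA^\gamma(\HH^t,\HH^{t+\zeta})}$ (using $\underline\lambda$ to pass from $\|\cdot\|_t$ to $\|\cdot\|_{t+\zeta}$ on $g_{r(\ve)}$ and the rate bound on $f-g_{r(\ve)}$). For the per-summand bound I would apply a triangle inequality
\[
\|u^{k,\ell}\|_{t+2}\le \big\|\omega_{R,k}\,\mathrm{e}^{-\alpha_{R,k}\cB}g^{(\ell)}\big\|_{t+2} + |\omega_{R,k}|\,\big\|\mathrm{e}^{-\alpha_{R,k}\cB}g^{(\ell)} - E(g^{(\ell)},\alpha_{R,k},h(\ve))\big\|_{t+2},
\]
where the first term is controlled directly by Proposition \ref{thm3.1}(iv): $\|\omega_{R,k}\mathrm{e}^{-\alpha_{R,k}\cB}g^{(\ell)}\|_{t+2}\le (1+C_0)^{1/2}\|g^{(\ell)}\|_t \lesssim A_1\|f\|_{\bar\cA^\gamma(\HH^t,\HH^{t+\zeta})}$. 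For the second term I would invoke Corollary \ref{cor:fromnowon} (whose hypotheses $s-t'=2-\zeta<2$ and $t'=t+\zeta\ge 0$ are assumed), which bounds the quadrature error by $C_5 d^{2+\max\{0,t+1\}/2}\max\{\alpha_{R,k}^{-1},\alpha_{R,k}^{-2}\}\mathrm{e}^{-2\pi b/h(\ve)}\mathrm{e}^{-c_1\alpha_{R,k}}\|g^{(\ell)}\|_{t+\zeta}$.

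The main obstacle is the potential blow-up of $|\omega_{R,k}|\max\{\alpha_{R,k}^{-1},\alpha_{R,k}^{-2}\}$ as $\alpha_{R,k}$ shrinks to $c_3\ve^{e(\zeta)}$, see \eref{alphalower2}. I would resolve this by recalling that the bound \eref{used} already established in the proof of Proposition \ref{thm3.1} yields the pointwise estimate $\omega_{R,k}\lambda\,\mathrm{e}^{-\alpha_{R,k}\lambda}\le 1+C_0$, whence setting $\lambda=\alpha_{R,k}^{-1}$ gives $\omega_{R,k}\le \mathrm{e}(1+C_0)\alpha_{R,k}$; hence
\[
|\omega_{R,k}|\max\{\alpha_{R,k}^{-1},\alpha_{R,k}^{-2}\}\;\le\;\mathrm{e}(1+C_0)\max\{1,\alpha_{R,k}^{-1}\}\;\lesssim\;\ve^{-e(\zeta)}.
\]
The choice $h(\ve)=c_6(\log(d/\ve))^{-1}$ from \eref{condh2} then renders $d^{2+(t+1)_+/2}\,\mathrm{e}^{-2\pi b/h(\ve)} = d^{2+(t+1)_+/2}(d/\ve)^{-2\pi b/c_6}$, and choosing $c_6$ small enough this factor absorbs the polynomial growth $\ve^{-e(\zeta)}$ and the dimension power, leaving a bound of order $1$ (in fact of order $\ve$). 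The uniform per-summand bound $\|u^{k,\ell}\|_{t+2}\le C_7\|f\|_{\bar\cA^\gamma(\HH^t,\HH^{t+\zeta})}$ follows, with $C_7$ depending only on $C_0$ and $A_1$ (the prefactors arising from the quadrature error term are absorbed into the accuracy budget). Combining the global and per-summand bounds yields \eref{stability}, completing the proof.
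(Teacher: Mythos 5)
Your proof follows essentially the same route as the paper's: for \eref{accuracy} you split $\|u-\bar u(\ve)\|_{t+2}$ through $u(\ve)=\bar S_{R(\ve)}(\cB)g_{r(\ve)}$ and combine \eref{initial11} with Proposition \ref{prop:SRBepsilon} and the $\HH^{t+\zeta}$-control on $g_{r(\ve)}$ from {\bf (A1)}; for \eref{stability} you split each summand $u^{k,\ell}=\omega_{R,k}E(g^{(\ell)},\alpha_{R,k},h)$ into an exact exponential piece bounded by Proposition \ref{thm3.1}(iv) (the paper uses \eref{tripbound-0}, which is the same estimate with $t\mapsto t+\zeta$) plus a quadrature-error piece bounded by Corollary \ref{cor:fromnowon}. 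You also add an explicit bound on the global norm $\|\bar u(\ve)\|_{t+2}$ — which the paper leaves implicit, but which is in fact needed since $\tripnorm{\cdot}_{rR,t+2}$ takes a max that includes $\|\bar u(\ve)\|_{t+2}$, and a naive sum over $rR$ summands would lose a factor — so this is a genuinely useful completion.

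The one place where you diverge from the paper, and where there is a real gap, is the control of $\omega_{R,k}\cdot\max\{\alpha_{R,k}^{-1},\alpha_{R,k}^{-2}\}\cdot{\rm e}^{-c_1\alpha_{R,k}}$. The paper does this by appealing to the fact, asserted in the proof of Proposition \ref{prop:SRBepsilon}, that $\omega_{R,k}{\rm e}^{-c_1\alpha_{R,k}}\leq C(\Gamma,\ulambda)$, after which $\alpha_{R,k}^{-2}\leq T_{R(\ve)}^2$ (Lemma \ref{explemma1}(i)) lets one conclude directly via \eref{when1} — this is the inequality that the already-chosen $c_6$ of \eref{condh2} is engineered to satisfy, so no new condition on $c_6$ is imposed. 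You instead attempt a self-contained bound $\omega_{R,k}\leq{\rm e}(1+C_0)\alpha_{R,k}$ by evaluating the pointwise estimate \eref{used} at $x=\alpha_{R,k}^{-1}$. But \eref{used} (in the form $\omega_{r,k}x{\rm e}^{-\alpha_{r,k}x}\leq xS_r(x)\leq 1+C_0$) is only valid for $x\geq\ulambda$; the substitution $x=\alpha_{R,k}^{-1}$ therefore requires $\alpha_{R,k}\leq 1/\ulambda$. For the remaining exponents with $\alpha_{R,k}>1/\ulambda$ your bound on $\omega_{R,k}$ no longer holds, and the crude fallback ($x=\ulambda$, giving $\omega_{R,k}\leq(1+C_0){\rm e}^{\alpha_{R,k}\ulambda}/\ulambda$) does not manifestly cancel against ${\rm e}^{-c_1\alpha_{R,k}}$ unless $c_1\geq\ulambda$, which you haven't verified. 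You should either handle that regime separately, or — more economically, and matching the paper — invoke the uniform bound $\omega_{R,k}{\rm e}^{-c_1\alpha_{R,k}}\leq C(\Gamma,\ulambda)$ together with \eref{when1}, which closes the argument for all $k$ at once. Relatedly, your final step says "choosing $c_6$ small enough"; you should instead point out that the $c_6$ fixed in \eref{condh2} already ensures \eref{when1}, since introducing a new constraint on $c_6$ here would retroactively alter the quantities $R(\ve)$, $N(h(\ve))$ on which the whole construction depends.
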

\begin{proof}
The estimate \eref{accuracy} follows directly from {\bf (A1)} and \eref{errorh} in Proposition \ref{prop:SRBepsilon} for $s=t+2$.
Concerning \eref{stability},
we need to estimate the terms $\omega_{R(\ve),k} \|E(g^{(\ell)},\alpha_{R(\ve),k},h(\ve))\|_{t+2}$.
In \eref{tripbound-0} we have already shown that
$$
\omega_{R(\ve),k}\|{\rm e}^{-\alpha_{R(\ve),k}\cB}g^{(\ell)}\|_{t+2+\zeta} \leq (1+ C_0)\|g^{(\ell)}\|_{t+\zeta}.
$$
Hence, by Proposition \ref{prop:SRBepsilon}, and in particular \eref{when1},
\begin{eqnarray*}
&&\omega_{R(\ve),k}\|E(g^{(\ell)},\alpha_{R(\ve),k},h(\ve))\|_{t+2} \le  (1+ C_0)\|g^{(\ell)}\|_{t+\zeta}\\[2mm]
&&\quad\qquad\qquad +
\omega_{R(\ve),k} d^{2+(t+1)/2}  
C_5 {\rm e}^{-c_1\alpha_{R(\ve),k}}
 \alpha_{R(\ve),k}^{-2}{\rm e}^{-2\pi b/h}
\|g^{(\ell)}\|_{t+\zeta}\\[2mm]
&&\qquad\qquad\le  (1+ C_0)\|g^{(\ell)}\|_{t+\zeta}
+C(\ulambda)d^{2+(t+1)/2}  
C_5 \alpha_{R(\ve),k}^{-2}{\rm e}^{-2\pi b/h}\|g^{(\ell)}\|_{t+\zeta}\\
&&\qquad\qquad\le  \Big(1+C_0 + \frac{\ve}{2A_1}\Big) \|g^{(\ell)}\|_{t+\zeta},
\end{eqnarray*}
which completes the proof.
\end{proof}

To prove now (i) of Theorem \ref{thm:complex}, we specialize Theorem \ref{thm:uepsilon} to the case $t=-1$, $1\le \zeta \le 2$.
By step (ii) of {\bf Scheme-Exp}, we know that the approximate resolvent solutions are $(1,-1+\zeta)$-accurate.
Thus, we only need to verify that they are automatically $(0,0)$-accurate as well.
To this end, we now require also the validity of assumptions {\bf (A2), (A3)}. In fact, \rd{we} set $\zeta := 1+ \zeta'$ for some $\zeta'\ge 0$,
and assume that the finite element solution $v_\delta\in V_{j,\delta}$
satisfies \eref{target-sol}.  By interpolation, we then obtain
$$
\|v-v_\delta\|_{H^1_j}\le C \delta^{\frac{1}{\zeta}}\|w\|_{L_2(D_j)}.
$$
Using the standard Aubin--Nitsche duality argument, this in turn, yields
$$
\|v-v_\delta\|_{L_2(D_j)}\le C \big(\delta^{\frac{1}{\zeta}}\big)^2 \|w\|_{L_2(D_j)}\le C \delta \|w\|_{L_2(D_j)},
$$
since $\zeta \le 2$. Hence, Theorem \ref{thm:uepsilon} is applicable and proves (i) of Theorem \ref{thm:complex}.

\subsection{The proof of Theorem \ref{thm:complex}, (ii) and (iii)}

The major part of the computational work in {\bf Scheme-Exp} is obviously \rd{associated with} the approximate solution of the resolvent equations \eref{resolvent},
which could be done completely in parallel. Let us denote by $\cost(q,k,\ell,j)$ the computational cost
of \eref{resolvent} for $\tau_j= g^{(\ell)}_j$ for $s= 1$, $t'= -1+\zeta$. Hence, the total cost of computing
$\bar u(\ve)$ is given by
\be
\label{cost}
\cost(\bar u(\ve)) = \sum_{q=-N}^N\sum_{\ell=1}^{r(\ve)}\sum_{k=1}^{R(\ve)}\sum_{j=1}^d\cost(q,k,\ell,j).
\ee

In order to continue with the analysis of the computational cost \eref{cost} of the {\bf Scheme-Exp} we require
the validity of assumptions {\bf (A1) -- (A3)}.

To estimate the cost, $\cost(q,k,\ell,j)$, of computing the approximate solution
$\bar u_{j,q}(\alpha_{R(\ve),k},g^{(\ell)}_j)$ requires a few further preparatory remarks.   First note that, by \eref{Gamma},
the right-hand side of the corresponding resolvent problem  \eref{resolvent} is given by
$$
\rhs(q,k,j,\ell) := -\frac{1}{2\pi i}\sinh(qh(\ve) +i\pi/6){\rm e}^{-\alpha_{R(\ve),k}\Gamma(qh(\ve))}g^{(\ell)}_j,
$$
so that
\be
\label{rhs1}
\|\rhs(q,k,j,\ell)\|_{H^{-1+\zeta}_j} \leq C_{10} {\rm e}^{|q h(\ve)|}{\rm e}^{-\alpha_{R(\ve),k}{\rm e}^{|qh(\ve)|/2}}\|g^{(\ell)}_j\|_{H^{-1+\zeta}_j},
\ee
where $C_{10}$ depends only on $\Gamma$ and $\ulambda$.
We record the following simple observation.

\begin{lemma}
\label{lem:crudebound}
For $\rhs(q,k,j,\ell)$ defined above and all $q=-N\ldots,N$,  $N=N(h(\ve))$, $j=1,\ldots,d$, $\ell=1,\ldots,R(\ve)$, $\ell=1,\ldots,r(\ve)$, we have
\be
\label{crudebound1}
\|\rhs(q,k,j,\ell)\|_{H^{-1+\zeta}_j} \leq C_{11}   
\alpha_{R(\ve),k}^{-1} \|g^{(\ell)}_j\|_{H^{-1+\zeta}_j},
\ee
where the constant $C_{11}$ depends only on $\Gamma$ and $\ulambda$ and where $\alpha(\ve)$ is
defined by \eref{alphalower2}.
\end{lemma}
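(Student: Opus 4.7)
The plan is to refine the estimate \eref{rhs1} just enough to obtain the sharper $\alpha_{R(\ve),k}^{-1}$ dependence. Taking \eref{rhs1} at face value would only yield an $\alpha^{-2}$ bound (via the substitution $y={\rm e}^{|qh|/2}$ and maximizing $y^2{\rm e}^{-\alpha y}$), so the point is to go back one step in the derivation and keep $\cosh(qh)$ intact in the exponent rather than lower-bounding it by ${\rm e}^{|qh|/2}$.

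Specifically, I would start from the definition
\[
\rhs(q,k,j,\ell)=-\frac{1}{2\pi i}\sinh\bigl(qh(\ve)+i\pi/6\bigr)\,{\rm e}^{-\alpha_{R(\ve),k}\Gamma(qh(\ve))}\,g^{(\ell)}_j
\]
and exploit the two factors separately. For the trigonometric prefactor, the identity
$\sinh(qh+i\pi/6)=\cos(\pi/6)\sinh(qh)+i\sin(\pi/6)\cosh(qh)$ combined with $|\sinh(qh)|+|\cosh(qh)|={\rm e}^{|qh|}$ gives $|\sinh(qh+i\pi/6)|\le {\rm e}^{|qh|}$. For the exponential factor, splitting $\Gamma(qh)=\underline c+\cos(\pi/6)\cosh(qh)+i\sin(\pi/6)\sinh(qh)$ yields
\[
\bigl|{\rm e}^{-\alpha\Gamma(qh)}\bigr|={\rm e}^{-\alpha\underline c}\,{\rm e}^{-\alpha\cos(\pi/6)\cosh(qh)}\le {\rm e}^{-\beta_0 {\rm e}^{|qh|}},
\]
where in the last step I use the sharp lower bound $\cosh(qh)\ge {\rm e}^{|qh|}/2$ and recall $\beta_0=\alpha\cos(\pi/6)/2$ from \eref{betas}. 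Combining these two pointwise estimates with the definition of $\rhs$ produces
\[
\|\rhs(q,k,j,\ell)\|_{H^{-1+\zeta}_j}\le \frac{1}{2\pi}\,{\rm e}^{|qh(\ve)|}\,{\rm e}^{-\beta_0 {\rm e}^{|qh(\ve)|}}\,\|g^{(\ell)}_j\|_{H^{-1+\zeta}_j}.
\]

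The final step is the one-variable maximization: setting $y={\rm e}^{|qh(\ve)|}\ge 1$ reduces the task to bounding $y\,{\rm e}^{-\beta_0 y}$, whose maximum over $y\ge 0$ is $1/(e\beta_0)$, attained at $y=1/\beta_0$. Therefore
\[
{\rm e}^{|qh(\ve)|}\,{\rm e}^{-\beta_0 {\rm e}^{|qh(\ve)|}}\le \frac{1}{e\beta_0}=\frac{2}{e\cos(\pi/6)\,\alpha_{R(\ve),k}},
\]
uniformly in $q$ and $h$, which yields \eref{crudebound1} with the constant
$C_{11}=(\pi e\cos(\pi/6))^{-1}$ (the residual $e^{-\alpha\underline c}\le 1$ only improves the bound; $\underline c$ was chosen in terms of $\underline\lambda$ so the constant is governed by $\Gamma$ and $\underline\lambda$ alone, as claimed).

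There is no real obstacle here beyond recognizing the gap in the chain of inequalities leading to \eref{rhs1}: the key gain comes from keeping $\cosh(qh)\ge {\rm e}^{|qh|}/2$ \emph{inside} the exponent, so that the maximization is of the shape $y\,{\rm e}^{-\beta_0 y}$ (single polynomial factor, worth one power of $\alpha^{-1}$) rather than $y^2\,{\rm e}^{-\alpha y}$ (two polynomial factors, worth $\alpha^{-2}$). Once this is seen, the rest is elementary calculus and keeping track of the fact that, since $q$ ranges only over integers with $|q|\le N(h(\ve))$, the uniform bound in $y\ge 1$ suffices—no use is made of any \textit{a priori} upper bound on $|qh(\ve)|$.
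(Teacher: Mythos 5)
Your proof is correct and follows the same route the paper intends: reduce to the elementary maximization of $y\,{\rm e}^{-\beta_0 y}$ after extracting the dependence of the prefactor and the exponential on $|qh(\ve)|$. You are also right that \eref{rhs1} as printed (with ${\rm e}^{|qh(\ve)|/2}$ rather than ${\rm e}^{|qh(\ve)|}/2$ inside the outer exponential) would only give $\alpha^{-2}$; the paper's own one-line proof, which maximizes $x\,{\rm e}^{-\alpha x/2}$, implicitly works with the corrected form $\cosh(qh)\ge {\rm e}^{|qh|}/2$ that you derive explicitly, so your argument is the same one, just made precise.
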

\begin{proof}
Since the function $x{\rm e}^{-\alpha x/2}$ attains its maximum {on $[0,\infty)$} at $x^*= 2/\alpha$
{where it takes the} value $\frac{2}{\alpha}{\rm e}^{-1}$ the assertion is an immediate consequence of \eref{rhs1}.
 \end{proof}

Recall that by \eref{baru} the target accuracy depends also on $\alpha_{R(\ve),k}$ but becomes more stringent \rd{as}
$\alpha_{R(\ve),k}$ \rd{increases}. Combining Lemma \ref{lem:crudebound} with \eref{baru} shows that the target accuracy
$\delta(q,k,j,\ell)$, at which   the resolvent problem
$$
(\Gamma(g h(\ve))\mI -\cB_j)u_{j,q}(\alpha_{R(\ve),k},g^{(\ell)}_j)=-\frac{1}{2\pi  i}\sinh(qh(\ve)+i\pi/6)\,{\rm e}^{-\alpha_{R(\ve),k}\Gamma(qh(\ve))}
g^{(\ell)}_j
$$
has to be solved in order to satisfy the tolerance required in \eref{baru} for $s=1, t'=-1+\zeta$, should satisfy, in view of \eref{Calpha},
$$
C_{11} (\alpha_{R(\ve),k})^{-1} \delta(q,k,j,\ell) \leq \big(C_3|\log(h(\ve)\alpha_{R(\ve),k})|\big)^{-1} C_1\alpha_{R(\ve),k}^{-1} {\rm e}^{-2\pi b/h(\ve)}.
$$
This gives
\be
\label{target-delta}
\delta(q,k,j,\ell) \leq C_{12} \big|\log(h(\ve)\alpha_{R(\ve),k})\big|^{-1} {\rm e}^{-2\pi b/h(\ve)},
\ee
where $C_{12}$ depends only on $\ulambda$ and $\Gamma$.
We can now estimate the complexity of the approximate resolvent solutions.
\begin{lemma}
\label{lem:sol-complexity}
Under the hypotheses of Theorem \ref{thm:complex},  
for each $\ell=1,\ldots, r(\ve)$, $k=1,\ldots,R(\ve)$, $q=-N,\ldots,N$, $j=1,\ldots,d$,
the trial spaces $V_{q,k,\ell,j}$ required to approximate the solution to \eref{resolvent} with accuracy
specified in \eref{baru} have dimension at most
\be
\label{dim-enough}
{\rm dim}\, V_{q,k,\ell,j} \leq  
  C_{15} \Big(\frac{\ve}{d}\Big)^{-\bar\rho p/\zeta}.
\ee
Moreover, the computational work required to determine $\bar u_{j,q}(\alpha_{R(\ve),k},g^{(\ell)}_j)$
is bounded by
\be
\label{sol-complex}
\cost(q,k,\ell,j) \leq C_{16}  
\Big(\frac{\ve}{d}\Big)^{-\bar\rho p/\zeta},
 \ee
where  $\bar\rho$ is any fixed constant satisfying $\bar\rho > 2\pi b/c_6$, and the constants $C_{15},C_{16}$
depend on
$\bar\rho, C_8, C_9$, $c_6, c_3$.
\end{lemma}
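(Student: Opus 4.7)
The plan is to reduce the bound to a direct application of assumption \textbf{(A3)}: once a target accuracy $\delta(q,k,j,\ell)$ for the resolvent problem \eqref{resolvent} is identified, \textbf{(A3)} immediately yields dimension and flop counts of order $\delta^{-p/\zeta}$. So the task splits into two parts: (a) pin down the largest admissible $\delta$, and (b) show that $\delta^{-p/\zeta} \lesssim (\ve/d)^{-\bar\rho p/\zeta}$.

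For part (a) I would just invoke the estimate \eqref{target-delta} that has already been derived in the preamble. It was obtained by combining the $(1,-1+\zeta)$-accuracy requirement \eqref{baru} (with $s=1$, $t'=-1+\zeta$) with the crude upper bound \eqref{crudebound1} on the resolvent right-hand side $\rhs(q,k,j,\ell)$; this shows it suffices to take
\[
\delta(q,k,j,\ell) \;=\; C_{12}\,\bigl|\log\bigl(h(\ve)\,\alpha_{R(\ve),k}\bigr)\bigr|^{-1}\, e^{-2\pi b/h(\ve)}.
\]

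For part (b) I would first plug in the explicit formula $h(\ve) = c_6/\log(d/\ve)$ from \eqref{condh2}, which converts the exponential factor into the exact power $e^{-2\pi b/h(\ve)} = (\ve/d)^{2\pi b/c_6}$. The main obstacle, and the reason the lemma only asserts $\bar\rho > 2\pi b/c_6$ rather than equality, is the polylogarithmic prefactor $|\log(h(\ve)\alpha_{R(\ve),k})|$. To handle it I would use the lower bound $\alpha_{R(\ve),k} \geq c_3\ve^{e(\zeta)}$ from \eqref{alphalower2}, together with the fact that $\alpha_{R(\ve),k}$ is bounded above by a quantity that grows at most exponentially in $\sqrt{R(\ve)} = O(\log(1/\ve))$ (a straightforward consequence of the Braess--Hackbusch construction underlying Lemma \ref{explemma1}). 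This yields $|\log(h(\ve)\alpha_{R(\ve),k})| \lesssim \log(d/\ve)$, so the prefactor, even after being raised to the power $p/\zeta$, is a fixed power of $\log(d/\ve)$. Since every such polylog term is dominated by $(d/\ve)^{\eta}$ for every $\eta > 0$, the choice $\eta := (\bar\rho - 2\pi b/c_6)p/\zeta > 0$ absorbs it cleanly into the exponent.

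Combining the two parts gives $\delta(q,k,j,\ell)^{-p/\zeta} \leq C\,(\ve/d)^{-\bar\rho p/\zeta}$, and inserting this into the two estimates of \textbf{(A3)} produces \eqref{dim-enough} and \eqref{sol-complex} with the asserted dependence of $C_{15},C_{16}$ on $\bar\rho$, $c_6$, $c_3$, and $C_8, C_9$. The only non-mechanical step is the absorption of the polylog factor in part (b); everything else is essentially bookkeeping.
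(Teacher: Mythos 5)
Your proposal is correct and follows essentially the same route as the paper: you invoke the pre-established bound \eqref{target-delta}, substitute the explicit choice $h(\ve) = c_6/\log(d/\ve)$ from \eqref{condh2} to obtain the power $(\ve/d)^{2\pi b/c_6}$, bound the logarithmic prefactor via \eqref{alphalower2}, absorb it into the gap $\bar\rho - 2\pi b/c_6 > 0$, and then apply the counts \eqref{Vjdelta}, \eqref{flops} from assumption \textbf{(A3)}. The paper phrases the polylog absorption slightly differently (it applies $\log(d/\ve)\leq C(d/\ve)^\rho$ inside the bound on $|\log(h(\ve)\alpha_{R(\ve),k})|$ and folds the slack into the choice of $\bar\rho$), but this is the same estimate up to cosmetic reorganization.
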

\begin{proof}
We need to estimate the two factors on the right-hand side of \eref{target-delta}
from below.
To that end, recalling \eref{condh2} and \eref{repsilon},  
we have
\be
\label{e-first}
{\rm e}^{-2\pi b/h(\ve)} = (\ve/d)^{2\pi b/c_6} .
\ee
Moreover,
by \eref{condh2} and \eref{alphalower2} we obtain for $\alpha_{R(\ve),k}\leq 1$ (see \eref{alphalower2}) \rd{that}
\begin{eqnarray}
\label{e-second}
\big|\log(h(\ve)\alpha_{R(\ve),k})\big|  \leq  \big|\log\Big(\frac{c_6 c_3\ve^{e(\zeta)}}{\log( d/\ve)} \Big)\big|. 
\end{eqnarray}
Using that $\log (d/\ve) \leq C (d/\ve)^\rho$ holds for any $\rho >0$ with $C$ depending on $\rho$, we obtain
\be
\label{e-3}
\big|\log(h(\ve)\alpha_{R(\ve),k})\big| \le \log \Big(\frac{C_{13} d^\rho}{  \ve^{e(\zeta)+1}}\Big),
\ee
where $C_{13}$ depends on $\rho, c_3, c_6$. Combining \eref{e-first} and \eref{e-3} it suffices
to require that
\be
\label{dj-3}
\delta(q,k,j,\ell)\leq \Big(\frac{\ve}{d}\Big)^{2\pi b/c_6}\Big(\log \Big(\frac{C_{13} d^\rho}{  \ve^{e(\zeta)+1}}\Big)\Big)^{-1}.
\ee
Hence, choosing any fixed number $\bar\rho > 2\pi b/c_6$, there exists a constant $C_{14}$, depending on $\zeta,\rho, c_3, c_6$
such that
\be
\label{e-4}
\delta(q,k,j,\ell)\le C_{14} \Big(\frac{\ve}{d}\Big)^{\bar\rho}
\ee
guarantees the validity of \eref{baru}. The assertion of Lemma \ref{lem:sol-complexity} follows now
from \eref{Vjdelta} and \eref{flops}.
\end{proof}
 \medskip

We can now complete the proof of Theorem \ref{thm:complex}.
In fact,  in total $dr(\ve)R(\ve)(2N(h(\ve))+1)$ low-dimensional problems have to
be solved. Recall from \eref{hN} that
$$
(2N(h(\ve))+1)\leq C_3 h(\ve)^{-1}|\log (h(\ve)\alpha(\ve))|.
$$
By \eref{condh2} and \eref{e-3}, we conclude that
$$
(2N(h(\ve))+1)\leq C \Big(\log\Big(\frac{d}{\ve}\Big)\Big)^2,
$$
where $C$ depends on $C_3,c_3,c_6$, and $\rho$.
Now (ii) and (iii) are immediate consequences of
\eref{cost},  Lemma \ref{lem:sol-complexity},  and the bounds \eref{Repsilon2}, \eref{repsilon} for
$R(\ve)$, $r(\ve)$, respectively.\hfill $\Box$\\

The complexity bounds are still based on generous overestimations, since the constraints on the $\delta(q,k,j,\ell)$
are uniform and refer to the least favorable constellation of parameters. Also one need not use the same number of quadrature points for all $\alpha_{R(\ve),k}$.

\section{Concluding Remarks - Some Loss of Tensor-Sparsity}\label{sect:loss}
 
The prototypical example for the above setting is $\cB = - \Delta$.
A natural question is what could be said about tensor-sparsity when $-\Delta$ is replaced, for example, by \rd{the second-order elliptic differential operator $
w \mapsto -{\rm div}(A\nabla w) + w$},
where $A$ is a symmetric positive definite $(d\times d)$-matrix. Intuitively, since $A$ now combines different variables one expects that the solution to
\beqn
\label{diff1}
-{\rm div}(A\nabla w)  + w = g \quad \mbox{in}\,\, \R^d,\quad w\in H^1(\R^d),
\eeqn
 is ``less tensor-sparse''.
 In fact, since $A$ is symmetric positive definite, it can be factorized as $A=Q^TDQ$ where $Q$ is unitary. Defining $\uu (x) := w(Qx)=w(y)$, \eqref{diff1}
takes the form
\beqn
\label{diff2}
- \sum_{k=1}^d D_{k,k} \uu(x) + \uu(x) = \cB \uu(x) = f(x) := g(Qx),
\eeqn
where $\cB$ is again of the form \eref{tensor-sum}.
Hence, when $f$ in \eref{diff2} is tensor-sparse the results above apply to \eref{diff2} providing a tensor-sparse
solution in the new coordinate system. However, this does not imply, of course, any tensor-sparsity of $w$ as a function
of $y$ in the original coordinate system. Understanding the general circumstances in which
it is possible to quantify any tensor-sparsity of solutions to \eref{diff1} is the subject of ongoing work.

 \end{document}